\newcommand{\dotprod}[2]{\left\langle #1,#2 \right\rangle}
\newcommand{\Obound}[1]{\mathcal{O}\left( #1 \right)}
\newcommand{\norms}[1]{\left\| #1 \right\| }
\definecolor{PineGreen}{HTML}{008B72}
\newcommand{\greencheck}{\color{PineGreen}\ding{51}}
\newcommand{\redx}{\color{red} \ding{55}}
  \providecommand{\R}{\mathbb{R}} %
  \DeclareMathOperator{\E}{{\mathbb E}}
  \DeclareMathOperator*{\argmin}{arg\,min}
  \DeclareMathOperator*{\arginf}{arg\,inf}
  \providecommand{\ee}{\mathbf{e}}
  \providecommand{\mL}{\mathbf{L}}
  \providecommand{\cK}{\mathcal{K}}
  \providecommand{\cO}{\mathcal{O}}
  \providecommand{\cT}{\mathcal{T}}
\providecommand{\mycomment}[3]{\todo[caption={},size=footnotesize,color=#1!20, inline]{\textbf{#2: }#3}}%
\providecommand{\inlinecomment}[3]{%
  {\color{#1}#2: #3}}%
\newcommand\commenter[2]%
\newcommand\csname i#1\endcsname[1]{\inlinecomment{#2}{#1}{##1}}
\newcommand\csname #1\endcsname[1]{\mycomment{#2}{#1}{##1}}
\newcommand{\circledOne}{\text{\ding{172}}}
\newcommand{\circledTwo}{\text{\ding{173}}}
\newcommand{\circledThree}{\text{\ding{174}}}
\definecolor{main}{HTML}{5989cf}    
\definecolor{sub}{HTML}{cde4ff}     
\newtcolorbox{boxA}{
    colback = white,
    enhanced,
    boxrule = 1.5pt, 
    colframe = black, 
    borderline = {1.5pt}{0pt}{black, dashed} 
}
\newtcolorbox{boxF}{
    colback = white,
    enhanced,
    boxrule = 1.5pt, 
    colframe = white, 
    borderline = {1.5pt}{0pt}{black, dashed} 
}
\newtcolorbox{boxB}{
    fontupper = \bf\color{main}, 
    boxrule = 1.5pt,
    colframe = main,
    rounded corners,
    arc = 5pt   
}
\newtcolorbox{boxC}{
    colback = sub, 
    boxrule = 0pt  
}
\newtcolorbox{boxD}{
    colback = white, 
    colframe = black, 
    boxrule = 0pt, 
    toprule = 3pt, 
    bottomrule = 3pt 
}
\newtcolorbox{boxE}{
    enhanced, 
    boxrule = 0pt, 
    borderline = {0.75pt}{0pt}{main}, 
    borderline = {0.75pt}{2pt}{sub} 
}
\newtcolorbox{boxG}{
    enhanced,
    boxrule = 0pt,
    colback = sub,
    borderline west = {1pt}{0pt}{main}, 
    borderline west = {0.75pt}{2pt}{main}, 
    borderline east = {1pt}{0pt}{main}, 
    borderline east = {0.75pt}{2pt}{main}
}
\newtcolorbox{boxH}{
    colback = white, 
    colframe = black, 
    boxrule = 0pt, 
    leftrule = 6pt 
}
\newtcolorbox{boxI}{
    colback = sub, 
    colframe = main, 
    boxrule = 0pt, 
    toprule = 6pt 
}
\newtcolorbox{boxJ}{
    sharpish corners, 
    colback = sub, 
    colframe = main, 
    boxrule = 0pt, 
    toprule = 4.5pt, 
    enhanced,
    fuzzy shadow = {0pt}{-2pt}{-0.5pt}{0.5pt}{black!35} 
}
\newtcolorbox{boxK}{
    sharpish corners, 
    boxrule = 0pt,
    toprule = 4.5pt, 
    enhanced,
    fuzzy shadow = {0pt}{-2pt}{-0.5pt}{0.5pt}{black!35} 
}
\newtcolorbox{boxL}{
    fontupper = \color{main},
    rounded corners,
    arc = 6pt,
    colback = sub, 
    colframe = main!50, 
    boxrule = 0pt, 
    bottomrule = 4.5pt 
}
\newtcolorbox{boxM}{
    fontupper = \color{white},
    rounded corners,
    arc = 6pt,
    colback = main!80, 
    colframe = main, 
    boxrule = 0pt, 
    bottomrule = 4.5pt,
    enhanced,
    fuzzy shadow = {0pt}{-3pt}{-0.5pt}{0.5pt}{black!35}
}
\theoremstyle{plain}
\newtheorem{theorem}{Theorem}[section]
\newtheorem{lemma}[theorem]{Lemma}
\theoremstyle{definition}
\newtheorem{assumption}[theorem]{Assumption}
\theoremstyle{remark}
\newtheorem{remark}[theorem]{Remark}
\title{\bf Linear Convergence Rate in Convex Setup is Possible! \\Gradient Descent Method Variants under $(L_0,L_1)$-Smoothness}
\author{\begin{tabular}{c}
     Aleksandr Lobanov\footnote{Part of the work was done while A.~Lobanov was an intern at MBZUAI.} \\ 
     MIPT, Skoltech, ISP RAS\\
     \texttt{lobbsasha@mail.ru}
\end{tabular} \and \begin{tabular}{c}
     Alexander Gasnikov \\ 
     Innopolis University, MIPT, ISP RAS\\
     \texttt{gasnikov@yandex.ru}
\end{tabular} \and \begin{tabular}{c}
     Eduard Gorbunov\\
     MBZUAI\\
     \texttt{eduard.gorbunov@mbzuai.ac.ae}
\end{tabular} \and  \begin{tabular}{c}
     Martin Tak\'a\v{c}\\
     MBZUAI\\
     \texttt{martin.takac@mbzuai.ac.ae}
\end{tabular}}
\date{\today\footnote{{\bf The first version was submitted to arXiv on December 22, 2024. The second version contains the following changes:} 1) minor inaccuracies in the proofs from Section~\ref{sec:Full-Gradient Methods} were fixed, the results remained the same; 2) major inaccuracies in the proofs from Section~\ref{sec:Coordinate Descent Type Methods} were fixed, and the results were simplified to the uniform sampling case; 3) major inaccuracies in the results from Section~\ref{sec:Extension to Strongly Convex Setup} were fixed; 4) writing was improved, missing references were added (including the updated version of \citep{Vankov_2024_ver_2}).}}
\begin{document}

\maketitle

\begin{abstract}
The gradient descent (GD) method -- is a fundamental and likely the most popular optimization algorithm in machine learning (ML), with a history traced back to a paper in 1847 \cite{Cauchy_1847}. It was studied under various assumptions, including so-called $(L_0,L_1)$-smoothness, which received noticeable attention in the ML community recently. In this paper, we provide a refined convergence analysis of gradient descent and its variants, assuming generalized smoothness. In particular, we show that $(L_0,L_1)$-GD 
has the following behavior in the \textit{convex setup}: as long as $\norms{\nabla f(x^k)} \geq \frac{L_0}{L_1}$ the algorithm has \textit{linear convergence} in function suboptimality, and when $\norms{\nabla f(x^k)} < \frac{L_0}{L_1}$ is satisfied, $(L_0,L_1)$-GD has standard sublinear rate. Moreover, we also show that this behavior is common for its variants with different types of oracle: \textit{Normalized Gradient Descent} as well as \textit{Clipped Gradient Descent} (the case when the full gradient $\nabla f(x)$ is available); \textit{Random Coordinate Descent} (when the gradient component $\nabla_{i} f(x)$ is available); \textit{Random Coordinate Descent with Order Oracle} (when only $\text{sign} [f(y) - f(x)]$ is available). In addition, we also extend our analysis of $(L_0,L_1)$-GD to the strongly convex case. 
\end{abstract}

\tableofcontents

\section{Introduction}
We consider the standard unconstrained minimization
\begin{equation} \label{eq:init_problem}
    \min_{x \in \mathbb{R}^d} f(x),
\end{equation}
where $f: \mathbb{R}^d \rightarrow \mathbb{R}$ is a convex differentiable function. This problem configuration is quite general and encompasses a broad range of applications in ML scenarios. For such problems, the traditional optimization algorithm is the \textit{gradient descent method} (GD) \cite{Cauchy_1847}, which has a sublinear convergence rate in the convex setting under the Lipschitz smoothness assumption \citep[see, e.g.,][]{Nesterov_2013}. In particular, GD is the core of optimization for machine learning, and various modifications of this method have been studied in different assumptions suited to ML applications.

In this paper, we consider one of such assumptions called \emph{$(L_0,L_1)$-smoothness} \citep{Zhang_2019, Zhang_2020, Chen_2023}, which in the case of twice differentiable functions, states that $\|\nabla^2 f(x)\| \leq L_0 + L_1\|\nabla f(x)\|$, i.e., the smoothness constant can grow as a linear function of the gradient norm. Under this assumption, different variants of GD are analyzed, including 
GD with clipping (Clip-GD) \citep{Zhang_2019, Zhang_2020, Koloskova_2023, Vankov_2024}, $(L_0,L_1)$-GD \citep{Gorbunov_2024, Vankov_2024}, Normalized GD (NGD) \citep{zhao2021convergence, Chen_2023, Vankov_2024}, and other variants \citep{crawshaw2022robustness, wang2022provable, faw2023beyond, wang2023convergence, hubler2024parameter, li2024convergence}. More precisely, in the deterministic convex case, the state-of-the-art results for Clip-GD, $(L_0,L_1)$-GD, and NGD are obtained by \citet{Gorbunov_2024, Vankov_2024} showing the $\cO\left(\frac{L_0R^2}{N}\right)$ rates for function suboptimality \underline{when $N = \Omega(L_1^2R^2)$}\footnote{
After the first version of our work appeared on arXiv, \citet{Vankov_2024} let us know that they also independently derived $\cO\left(\frac{L_0R^2}{N} + \left(1 - \frac{1}{L_1R}\right)^N F_0\right)$ rate for $(L_0,L_1)$-GD, where $F_0 = f(x^0) - f(x^*)$ during the discussion with reviewers of their work to a ML conference. Then, the authors updated their paper on arXiv \citep{Vankov_2024_ver_2}. At the moment of writing our paper, we were unaware of the updated version of \citep{Vankov_2024_ver_2}.
} leaving open questions about the refined methods behavior characterization for $N = \cO(L_1^2 R^2)$.

However, beyond the first-order methods, the algorithms for $(L_0,L_1)$-smooth optimization are weakly studied. In particular, \emph{random coordinate descent} (RCD) \cite{Nesterov_2012,Shalev_2009,Richtarik_2016}, which is useful in the situations when the computation of the full gradient is prohibitively expensive, is not analyzed in the context of $(L_0,L_1)$-smooth optimization. Moreover, in some cases, e.g., in the reinforcement learning with human feedback \citep{Tang_2024}, even objective values are available, and for given points $x,y\in \R^d$ one can only evaluate $\text{sign}[f(y) - f(x)]$. To the best of our knowledge, there are no theoretical convergence results for such methods under $(L_0,L_1)$-smoothness, and, in particular, the convergence of \emph{random coordinate descent with order oracle} (OrderRCD) \citep{Lobanov_2024} is not studied in this setup.

In this paper, we address this gap in the literature and provide the first analysis of RCD and OrderRCD for convex $(L_0,L_1)$-smooth optimization. Moreover, we improve the existing results for $(L_0,L_1)$-GD, NGD, and Clip-GD: we prove that these methods enjoy \emph{linear convergence rates without any additional assumptions} for the initial optimization phase when $\norms{\nabla f(x^k)} \geq \frac{L_0}{L_1}$.

Our contributions can be summarized as follows.
\begin{itemize}
    \item  We show under what conditions variants of the gradient descent achieve linear convergence in the convex setup.

    \item  We prove better complexity bounds for $(L_0,L_1)$-GD, NGD, and Clip-GD than previously known ones, assuming convexity and $(L_0,L_1)$-smoothness of the objective function. We show that these algorithms converge linearly at first, and slow down as they approach the solution, converging sublinearly. We also show that for the phase of convergence of NGD, when the iterates satisfy $\norms{\nabla f(x^k)} \geq c$, the method converges linearly. Table~\ref{tab:table_compare} demonstrates the conditions under which Clip-GD converges linearly. In particular, the case of $\lambda_k = 1$ corresponds to the convergence of GD, and the case of $\lambda_k = \frac{c}{\norms{\nabla f(x^k)}}$ corresponds to the convergence of NGD. 

    \item We provide the first convergence results for the RCD and OrderRCD algorithms under the convexity and $(L_0,L_1)$-coordinate smoothness assumptions. We demonstrate that the linear convergence phenomenon of the full-gradient methods exists for both of the mentioned methods.
    
    \item We extend our analysis of $(L_0,L_1)$-GD to the case when the function is $\mu$-strongly convex.
\end{itemize}

\begin{table*}
    \begin{minipage}{\textwidth}
        \caption{Comparison of the convergence rates for Clip-GD in the convex case. Clip-GD update scheme: $x^{k+1} = x^k - \eta_k \cdot \text{clip}_c(\nabla f(x^k))$. Notation: $\text{clip}_c(\nabla f(x^k)) = \lambda_k \cdot \nabla f(x^k)$; $\lambda_k = \min \{ 1, \nicefrac{c}{\norms{\nabla f(x^k)}}\}$; $c>0$ -- clipping radius; $\eta_k > 0$ -- step size; $N=$ number of iterations; $F_0 = f(x^0) - f^*$; $R = \norms{x^0 - x^*}$; $T = \min \left\{ k \in \{0,1,...,N-1\}\; | \;\|\nabla f(x^k)\| < L_0/L_1 \right\}$; LCR~$=$~linear~convergence~rate.}
    \label{tab:table_compare} 
    \centering
    \resizebox{\linewidth}{!}{ 
    \begin{tabular}{lccclc}\toprule
    \multirow{2}{*}{Reference} & \multirow{2}{*}{Clipping threshold} & \multirow{2}{*}{$\lambda_k$} & Smoothness case: & Convergence rate & \multirow{2}{*}{LCR?} \\
    & & & $L_0 \,\,\, (?)\,\,\, cL_1  $ & $f(x^N) - f^* \lesssim$ \\ \midrule
    \multirow{5}{*}{\cite{Koloskova_2023}} & \multirow{5}{*}{arbitrary} & \multirow{2}{*}{$1$} & larger & $\mathcal{O}\left( \frac{L_0 R^2}{N} \right)$ & \redx \\
    & & & less or equal & $\mathcal{O}\left( \frac{\textcolor{red}{c} L_1 R^2}{N} \right)$ & \redx \\
    & & \multirow{2}{*}{$\frac{c}{\norms{\nabla f(x^k)}}$} & larger & $\mathcal{O}\left( \frac{L_0^2 \textcolor{red}{L} R^4}{c^2 N^2} \right)$ & \redx \\
    & & & less or equal & $\mathcal{O}\left( \frac{L_1^2 \textcolor{red}{L} R^4}{N^2} \right)$ & \redx \\ \midrule
    \cite{Gorbunov_2024} \& & \multirow{2}{*}{ $c = \frac{L_0}{L1}$ } & $1$ & equal & $\mathcal{O}\left( \frac{L_0 R^2}{N} \right)$ & \redx \\
    \cite{Vankov_2024} & & $\frac{c}{\norms{\nabla f(x^k)}}$ & equal & \redx & \redx \\ \midrule
    \multirow{6}{*}{\textbf{Theorem~\ref{th:Clip-GD} (Our work)}} & \multirow{6}{*}{arbitrary} & \multirow{2}{*}{$1$} & larger & $\mathcal{O}\left( \frac{L_0 R^2}{N} \right)$ & \redx \\
    & & &  less or equal & $\mathcal{O}\left( \min\left\{\frac{L_0 R^2}{N-T}, \left(1 - \frac{1}{L_1 R} \right)^{T} F_0\right\} \right)$ & \greencheck \\
    & & \multirow{2}{*}{$\frac{c}{\norms{\nabla f(x^k)}}$} & larger & $\mathcal{O}\left( \left(1 - \frac{c}{L_0 R} \right)^N F_0 \right)$ & \greencheck \\
    & & & less or equal & $\mathcal{O}\left( \left(1 - \frac{1}{L_1 R} \right)^N F_0 \right)$ & \greencheck \\
    \bottomrule
    \end{tabular}}
    \end{minipage}
\end{table*}

\vspace{-1em}
    \subsection{Notations and main assumptions}
    Before discussing related work, we first introduce the notations and assumptions that are used in this paper.

    \paragraph{Notations.} We use $\dotprod{x}{y}:= \sum_{i=1}^{d} x_i y_i$ to denote standard inner product of $x,y \in \mathbb{R}^d$. We denote Euclidean norm in $\mathbb{R}^d$ as $\| x\| := \sqrt{ \sum_{i=1}^d x_i^2} = \sqrt{\dotprod{x}{y}}$. We use $\ee_i \in \mathbb{R}^d$ to denote the $i$-th unit vector. For $\mL = (L^{(1)},\ldots, L^{(d)})^\top \in \R^d$ and $\alpha \in \R$, we define the norms ${\| x \|_{[\mL, \alpha]} := \sqrt{ \sum_{i=1}^d (L^{(i)})^\alpha x_i^2}}$ and ${\| x \|_{[L_p, \alpha]}^* := \sqrt{ \sum_{i=1}^d \frac{1}{(L_p^{(i)})^\alpha} x_i^2}}$. We denote by $\nabla f(x)$ the full gradient of function $f$ at point $x \in \mathbb{R}^d$, and by $\nabla_{i} f(x)$ the $i$-th coordinate gradient of function $f$ at point $x \in \mathbb{R}^d$. We also introduce $S^{\mL}_\alpha := \sum_{i}^d (L^{(i)})^\alpha$. We use $\tilde{O} (\cdot)$ to hide the logarithmic coefficients. We denote $f^* := f(x^*)$ and $x^* \in X^* \coloneqq \arg\min_{x\in \R^d} f(x)$ to be any solution of \eqref{eq:init_problem}. We also use $R \coloneqq \|x^0 - x^*\|$ and $F_0 \coloneqq f(x^0) - f^*$.

    The most common assumption about smoothness in the literature \citep[see, e.g.,][]{Nesterov_2013} is $L$-smoothness.
    \begin{assumption}[$L$-smoothness]\label{ass:L_smooth}
        Function $f$ is $L$-smooth if the following inequality is satisfied for any $x,y \in \mathbb{R}^d$:
        \begin{equation*}
            \norms{\nabla f(y) -\nabla f(x)} \leq L \norms{y - x}.
        \end{equation*}
    \end{assumption}

    However, instead of standard $L$-smoothness, we focus on the so-called $(L_0,L_1)$-smoothness \citep{Zhang_2019, Zhang_2020}.
    \begin{assumption}[$(L_0, L_1)$-smoothness]\label{ass:L0_L1_smooth}
        Function ${f: \mathbb{R}^d \rightarrow \mathbb{R}}$ is $(L_0, L_1)$-smooth if the following inequality is satisfied for any $x,y \in \mathbb{R}^d$ with $\norms{y - x} \leq \frac{1}{L_1}$:
        \begin{equation}
            \norms{\nabla f(y) -\nabla f(x)} \leq \left(L_0 + L_1 \norms{\nabla f(x)} \right) \norms{y - x}. \label{eq:L0_L1_smoothness}
        \end{equation}
    \end{assumption}
    If $L_1 = 0$, the above assumption recovers Assumption~\ref{ass:L_smooth} with $L = L_0$. Moreover, $(L_0,L_1)$-smoothness is strictly more general than $L$-smoothness, see the examples in \cite{Zhang_2019, Chen_2023, Koloskova_2023, Gorbunov_2024}. 

    Next, we also use a coordinate-wise version of Assumption~\ref{ass:L0_L1_smooth} introduced by \citet{crawshaw2022robustness}.
    \begin{assumption}[$(L_0, L_1)$-coordinate-smoothness]\label{ass:L0_L1_coordinate_smooth}
        A function ${f: \mathbb{R}^d \rightarrow \mathbb{R}}$ is $(L_0, L_1)$-coordinate-smooth for $L_0^{(1)}, L_0^{(2)}, ..., L_0^{(d)}, L_1^{(1)}, L_1^{(2)}, ..., L_1^{(d)} \geq 0$) if for any ${i \in [d]}$, $x \in \mathbb{R}^d$ and $h \in \mathbb{R}$, $|h| \leq \frac{1}{\max_{i\in[d]} L_1^{(i)}}$ the following inequality holds:
        \begin{equation*}
            |\nabla_i f(x + h \ee_i) -\nabla_i f(x)| \leq \left(L_0^{(i)} + L_1^{(i)} |\nabla_i f(x)| \right) |h|.
        \end{equation*}
    \end{assumption}
    The above assumption generalizes the standard coordinate $L$-smoothness \citep{Lin_2014, Allen_2016, Zhang_2017} similarly to how $(L_0,L_1)$-smoothness generalizes $L$-smoothness.
    
    We also assume that the function $f$~is~($\mu$-strongly)~convex.
    \begin{assumption}\label{ass:strongly_convex}
        Function $f: \mathbb{R}^d \rightarrow \mathbb{R}$ is $\mu \geq 0$ strongly convex if for any ${x,y \in \mathbb{R}^d}$ the following inequality holds:
        \begin{equation}
            f(y) \geq f(x) + \dotprod{\nabla f(x)}{y - x} + \frac{\mu}{2} \norms{y - x}^2.\label{eq:str_cvx}
        \end{equation}
    \end{assumption}
    Assumption~\ref{ass:strongly_convex} is classical and widely used in the literature \citep[see, e.g.,][]{Boyd_2004,Nesterov_2018}.
    
    \subsection{Paper structure}
    Further, our paper has the following structure. In Section~\ref{sec:Related Works}, we discuss the related work. In Section~\ref{sec:Full-Gradient Methods}, we provide the results for full-gradient methods. The case where the oracle only has access to the gradient coordinate or the comparison of the values of two functions is considered in Section~\ref{sec:Coordinate Descent Type Methods}. In Section~\ref{sec:Extension to Strongly Convex Setup}, we generalize our results for $(L_0,L_1)$-GD to the strongly convex case. Discussions of this work and future work plans are given in Section~\ref{sec:Discussion and Future Work}. 
    Section~\ref{sec:Conclusion} concludes the paper. All missing proofs of the theoretical results are provided in the Appendix. 
\section{Related Works}\label{sec:Related Works}
The literature on the analysis of GD-type methods is very rich. Below, we discuss only closely related works.
\paragraph{Full-gradient methods for the $(L_0,L_1)$-smooth convex optimization.} Although most of the existing works on $(L_0,L_1)$-smoothness focus on the non-convex case, there are several papers considering the (strongly) convex problems as well. \citet{Koloskova_2023} gives the first analysis Clip-GD \citep{Pascanu_2013} under $(L_0,L_1)$-smoothness and $L$-smoothness and proves $\cO\left(\max\left\{\frac{(L_0 + cL_1)R^2}{N}, \frac{R^4L(L_0 + cL_1)^2}{c^2N^2}\right\}\right)$ rate (see Table~\ref{tab:table_compare} for the details). This bound is derived under the additional $L$-smoothness assumption, which is not always satisfied for $(L_0,L_1)$-smooth problems. Moreover, when $\lambda_k = 1$ and $L_0 \leq cL_1$, the derived rate is proportional to $c$. In addition, the analysis from \citep{Koloskova_2023} implies the sublinear convergence rate for NGD (see the case $\lambda_k = \frac{c}{\norms{\nabla f(x^k)}}$ in Table~\ref{tab:table_compare}). \citet{Takezawa_2024} derive similar results for GD with Polyak Stepsizes (GD-PS), i.e., they show $\cO\left(\max\left\{\frac{L_0R^2}{N}, \frac{R^4LL_1^2}{c^2N^2}\right\}\right)$ convergence rate. Next, \citet{li2024convex} derive convergence rates for GD and its accelerated version under $(r,\ell)$-smoothness assumption, which generalizes $(L_0,L_1)$-smoothness. In particular, for GD \citet{li2024convex} prove $\cO(\frac{\ell R^2}{N})$ convergence rate, where $\ell = \cO(L_0 + L_1 G)$ and constant $G$ depends in $L_0,L_1, R, \|\nabla f(x^0)\|$, and $f(x^0) - f^*$, meaning that it can be exponentially large in terms of $L_1$ and $R$. Finally, \citet{Gorbunov_2024,Vankov_2024} independently improve the convergence rates of $(L_0,L_1)$-GD/Clip-GD by considering the special case of clipping radius $c=\frac{L_0}{L_1}$. More precisely, they prove $\cO\left(\frac{L_0 R^2}{N}\right)$ convergence rate if $N \geq L_1^2 R^2$ and extend this result to GD-PS (\citet{Vankov_2024} also show a similar result for NGD). However, the results from \cite{Gorbunov_2024,Vankov_2024} do not provide convergence rates in terms of $f(x^N) - f^*$ for the stage when $\|\nabla f(x^k)\| > \nicefrac{L_0}{L_1}$, which can be noticeable when $L_0$ is small and $L_1$ is large. \textit{In our work, we propose the analysis that addresses this limitation~(see~Table~\ref{tab:table_compare})}.
\paragraph{Coordinate descent type methods.} Convergence of coordinate methods is also relatively well-studied. For example, under the standard $L$-smoothness assumption ($(\nabla^2 f(x))_{i,i} \leq L$), the coordinate descent (CD) method has the following convergence rate $\mathcal{O} \left( \frac{d L R^2}{N} \right)$ \citep[see, e.g.,][]{Bubeck_2015}. Using the fact that $\frac{1}{d}\sum_{i=1}^{d} L^{(i)} \leq L$ and assuming $L$-coordinate-smoothness ($(\nabla^2 f(x))_{i,i} \leq L^{(i)}$), the previous result can be improved to $\mathcal{O} \left( \frac{\sum_{i=1}^d L^{(i)} R^2}{N} \right)$ rate. Next, assuming that the active coordinate $i_k$ can be obtained (independently) from the distribution $p_{\alpha}(i) = \nicefrac{(L^{(i)})^\alpha}{S_{\alpha}}$, then RCD converges at $\mathcal{O}\left( \frac{S_{\alpha} R_{[\mL, 1-\alpha]}^2}{N} \right)$ rate \cite{Nesterov_2012}, where\newline $R_{[\mL, 1-\alpha]} \coloneqq \max_{x\in \R^d}\left\{ \max_{x^* \in X^*} \|x - x^*\|_{[\mL,1-\alpha]}~:~f(x) \leq f(x^0) \right\}$. Moreover, \citet{Lobanov_2024} show that it is possible to create an OrderRCD algorithm based on RCD, whose oracle has access only to function comparisons (this oracle can be motivated by, e.g., RLHF \citep{ouyang2022training, bai2022training}). More precisely, \citet{Lobanov_2024} prove that the iteration complexity of OrderRCD is the same as for RCD, and the oracle complexity is inferior only in $\log(\nicefrac{1}{\epsilon})$ factor, where $\epsilon$ is the accuracy of the solution of the linear search problem. \textit{In our paper, we extend these results to the more general case of $(L_0,L_1)$-smoothness}. 

\section{Full-Gradient Methods}\label{sec:Full-Gradient Methods}
In this section, we present our result for full-gradient algorithms (see GD in Subsection~\ref{subsec:Gradient descent Method}, NGD in Subsection~\ref{subsec:Normalized GD method}, and Clip-GD in Subsection~\ref{subsec:Clipped GD method}).

\subsection{Gradient descent method}\label{subsec:Gradient descent Method}
    The first algorithm we consider has the following algorithm:
    \begin{algorithm}[H]
           \caption{Gradient Descent Method (GD)}
           \label{algo:GD}
        \begin{algorithmic}
           \State {\bfseries Input:} $x_0 \in \mathbb{R}^d$, iterations number $N$, step size $\eta_k >0$
           
           \For{$k=0$ {\bfseries to} $N-1$}
           \State $x^{k+1} \gets x^{k} - \eta_{k} \nabla f(x^k)$
           \EndFor
           \State {\bfseries Return:} $x^{N}$
        \end{algorithmic}
    \end{algorithm}  
    We prove the following result for Algorithm~\ref{algo:GD} with stepsize $\eta_k = (L_0+L_1 \norms{\nabla f(x^k)})^{-1}$ (to emphasize the specificity of the step size we call it $(L_0,L_1)$-GD for brevity).

    \begin{theorem}\label{th:GD_convex}
        Let function $f$ satisfy Assumption~\ref{ass:L0_L1_smooth} ($(L_0,L_1)$-smoothness) and Assumption~\ref{ass:strongly_convex} (convexity, ${\mu=0}$), then GD (Algorithm~\ref{algo:GD})  with step size ${\eta_k = (L_0 + L_1 \norms{\nabla f(x^k)})^{-1}}$ guarantees
        \begin{itemize}
            \item linear convergence, if $\|\nabla f(x^k)\| \geq \frac{L_0}{L_1}$ for $k \in [N-1]$
        \begin{boxF}
            \begin{equation*}
                f(x^N) - f^* \leq \left( 1 - \frac{1}{4 L_1 R} \right)^N F_0;
            \end{equation*}
        \end{boxF}

            \item sublinear convergence, if $\|\nabla f(x^{N-1})\| < \frac{L_0}{L_1}$:
            \begin{boxF}
            \begin{equation*}
                f(x^N) - f^* < \frac{4 L_0 R^2}{N}.
            \end{equation*} 
            \end{boxF}
        \end{itemize}

        In the general case, the convergence rate is
        \begin{boxA}
            \begin{equation*}
                f(x^N) - f^* \leq \min\left\{\frac{4 L_0 R^2}{N-T}, \left( 1 - \frac{1}{4 L_1 R} \right)^T F_0\right\},
            \end{equation*}
        \end{boxA}
        where $T\geq 0$ is the smallest index such as $\|\nabla f(x^T)\| < \frac{L_0}{L_1}$.
    \end{theorem}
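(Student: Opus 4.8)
The plan is to turn $(L_0,L_1)$-smoothness into a one-step descent inequality, analyze the two gradient regimes separately, and then stitch them together at the crossover index $T$.

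\textbf{Step 1 (descent lemma and bounded iterates).} First I would check that the chosen stepsize keeps every step inside the region where Assumption~\ref{ass:L0_L1_smooth} applies: $\norms{x^{k+1}-x^k} = \eta_k\norms{\nabla f(x^k)} = \frac{\norms{\nabla f(x^k)}}{L_0+L_1\norms{\nabla f(x^k)}} \le \frac{1}{L_1}$. Integrating \eqref{eq:L0_L1_smoothness} along the segment $[x^k,x^{k+1}]$ gives the quadratic upper bound $f(x^{k+1}) \le f(x^k) + \dotprod{\nabla f(x^k)}{x^{k+1}-x^k} + \frac{L_0+L_1\norms{\nabla f(x^k)}}{2}\norms{x^{k+1}-x^k}^2$, and substituting the GD step with $\eta_k=(L_0+L_1\norms{\nabla f(x^k)})^{-1}$ collapses this to
\[ f(x^{k+1}) \le f(x^k) - \frac{1}{2}\,\frac{\norms{\nabla f(x^k)}^2}{L_0+L_1\norms{\nabla f(x^k)}}, \]
so $\delta_k := f(x^k)-f^*$ is non-increasing. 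In parallel I would expand $\norms{x^{k+1}-x^*}^2$, invoke convexity \eqref{eq:str_cvx} (with $\mu=0$) as $\dotprod{\nabla f(x^k)}{x^k-x^*}\ge \delta_k$, and combine with the descent bound to obtain $\norms{x^{k+1}-x^*}^2 \le \norms{x^k-x^*}^2 - 2\eta_k\delta_{k+1} \le \norms{x^k-x^*}^2$. Hence $\norms{x^k-x^*}\le R$ for all $k$, which via convexity and Cauchy--Schwarz yields the crucial inequality $\delta_k \le \norms{\nabla f(x^k)}\,\norms{x^k-x^*} \le R\,\norms{\nabla f(x^k)}$.

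\textbf{Step 2 (linear regime).} When $\norms{\nabla f(x^k)}\ge L_0/L_1$ the denominator obeys $L_0+L_1\norms{\nabla f(x^k)}\le 2L_1\norms{\nabla f(x^k)}$, so the descent lemma gives $\delta_{k+1}\le \delta_k - \frac{\norms{\nabla f(x^k)}}{4L_1}$. Inserting $\norms{\nabla f(x^k)}\ge \delta_k/R$ from Step~1 produces the contraction $\delta_{k+1}\le\left(1-\frac{1}{4L_1R}\right)\delta_k$, and iterating over $k=0,\dots,N-1$ gives the linear rate $\delta_N\le\left(1-\frac{1}{4L_1R}\right)^N F_0$.

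\textbf{Step 3 (sublinear regime).} When $\norms{\nabla f(x^k)}< L_0/L_1$ the same denominator satisfies $L_0+L_1\norms{\nabla f(x^k)}< 2L_0$, i.e. $\eta_k> \frac{1}{2L_0}$. I would telescope the distance inequality $2\eta_k\delta_{k+1}\le \norms{x^k-x^*}^2-\norms{x^{k+1}-x^*}^2$ over the iterations spent in this regime; using $\eta_k>\frac{1}{2L_0}$ together with monotonicity of $\delta_k$ to replace every $\delta_{k+1}$ by the final value $\delta_N$ gives $\frac{M}{2L_0}\delta_N \le \sum_k \eta_k\delta_{k+1}\le \frac{R^2}{2}$, hence a bound of order $L_0R^2/M$, where $M$ is the number of such steps. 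The delicate point here — and the step I expect to be the main obstacle — is justifying that the region $\{\norms{\nabla f(x)}< L_0/L_1\}$ is \emph{absorbing} along the trajectory, so that once the gradient norm drops below $L_0/L_1$ it never rises above it again; raw $(L_0,L_1)$-smoothness only yields $\norms{\nabla f(x^{k+1})}\le 2\norms{\nabla f(x^k)}$, which is too weak. I would attempt to establish non-increase of $\norms{\nabla f(x^k)}$ via a local co-coercivity estimate for convex $(L_0,L_1)$-smooth functions, mimicking the $L$-smooth argument but with the gradient-dependent constant — precisely where the dependence of the smoothness constant on $\norms{\nabla f}$ makes the classical proof break and must be controlled carefully.

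\textbf{Step 4 (combining).} For the general statement I would let $T$ be the smallest index with $\norms{\nabla f(x^T)}< L_0/L_1$. By definition every $k<T$ lies in the linear regime, so Step~2 gives $\delta_T\le\left(1-\frac{1}{4L_1R}\right)^T F_0$, and monotonicity of $\delta$ gives $\delta_N\le\delta_T$, the second term of the minimum. By the absorbing property from Step~3, every $k\ge T$ lies in the sublinear regime (so $M=N-T$), which delivers $\delta_N\le \frac{4L_0R^2}{N-T}$, the first term; taking the smaller of the two yields the general bound. The two displayed special cases are then the extremes $T=N$ (purely linear) and $T=0$ (purely sublinear).
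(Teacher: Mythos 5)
Your proposal is correct and follows the same overall architecture as the paper's proof (descent lemma, the key inequality $\delta_k \le R\,\norms{\nabla f(x^k)}$ from convexity and bounded iterates, a linear contraction when $\norms{\nabla f(x^k)} \ge L_0/L_1$, a sublinear phase after the crossover index $T$, stitched together via monotonicity of $\delta_k$ and of the gradient norm), but it deviates in two worthwhile ways. First, in the sublinear regime the paper derives the recursion $F_{k+1} \le F_k - \tfrac{1}{4L_0R^2}F_k^2$ and applies the classical $1/F_k$ telescoping; you instead telescope the distance recursion $\norms{x^{k+1}-x^*}^2 \le \norms{x^k-x^*}^2 - 2\eta_k\delta_{k+1}$ using $\eta_k > \tfrac{1}{2L_0}$ and $\delta_{k+1}\ge\delta_N$, which is equally valid and in fact yields the sharper constant $\delta_N \le \tfrac{L_0R^2}{N-T}$ versus the paper's $\tfrac{4L_0R^2}{N-T}$; your derivation of $\norms{x^{k+1}-x^*}^2 \le \norms{x^k-x^*}^2 - 2\eta_k\delta_{k+1}$ (combining convexity with the descent bound) is also self-contained, whereas the paper cites an external reference for iterate boundedness. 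Second, you correctly identify the crux you leave unexecuted: the absorbing property of the region $\{\norms{\nabla f(x)} < L_0/L_1\}$. Your instinct that plain $(L_0,L_1)$-smoothness only gives $\norms{\nabla f(x^{k+1})} \le 2\norms{\nabla f(x^k)}$ is right, and your proposed remedy is exactly what the paper does: Lemma~\ref{lem:auxiliary_result} establishes the localized co-coercivity estimate
\begin{equation*}
    \frac{\norms{\nabla f(y) - \nabla f(x)}^2}{2\left(L_0 + L_1 \norms{\nabla f(x)}\right)} \leq f(x) - f(y) - \dotprod{\nabla f(y)}{x - y} \qquad \text{for } \norms{y-x}\le \tfrac{1}{L_1},
\end{equation*}
and Lemma~\ref{lem:GD} sums two copies of it (at $(x^k,x^{k+1})$ and $(x^{k+1},x^k)$) to conclude $\norms{\nabla f(x^{k+1})} \le \norms{\nabla f(x^k)}$, noting that the GD step satisfies $\norms{x^{k+1}-x^k} \le 1/L_1$ so the lemma applies. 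The only substantive work your plan still owes is the proof of this co-coercivity lemma itself, which is not immediate: the classical argument minimizes an auxiliary function $\varphi_a(b) = f(b) - \dotprod{\nabla f(a)}{b}$ along a gradient-type step, and one must verify that this auxiliary step also stays within the $1/L_1$ radius where the generalized smoothness inequality is available — the paper checks this using the smoothness inequality $\norms{\nabla f(c)-\nabla f(a)} \le (L_0+L_1\norms{\nabla f(c)})\norms{c-a}$ itself, which is precisely the point where the gradient-dependent constant must be controlled, as you anticipated.
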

    Given the monotonicity of the gradient norm (see Appendix~\ref{app:Monotonicity of Algorithms by Gradient Norms}), Theorem~\ref{th:GD_convex} characterizes in details the convergence behavior of GD for convex $(L_0,L_1)$-smooth problems. More precisely, as long as the gradient norm is larger than $\nicefrac{L_0}{L_1}$, GD converges with linear rate, but when the method approaches the solution ($\|\nabla f(x^k)\| < \nicefrac{L_0}{L_1}$) the convergence slows down to the standard sublinear rate. That is, $\mathcal{O}(\frac{L_0 R^2}{N})$ rate is common to the previous works \citep{Gorbunov_2024,Vankov_2024} (see Table~\ref{tab:table_compare}). However, in contrast to \citep{Gorbunov_2024,Vankov_2024}, our analysis shows $\cO (1 - \nicefrac{1}{L_1 R})^N F_0$ rate when $\|\nabla f(x^k)\| \geq \nicefrac{L_0}{L_1}$ (see the case of $\lambda_k = \nicefrac{c}{\norms{\nabla f(x^k)}}$ in Table~\ref{tab:table_compare}). Moreover, our result significantly improves the one from \citep{Koloskova_2023} (see smoothness case ``less or equal" with $\lambda_k = \nicefrac{c}{\norms{\nabla f(x^k)}}$ in Table~\ref{tab:table_compare}) from sublinear to linear and gets rid of potentially large parameter $c \geq \nicefrac{L_0}{L_1}$.  The proof of the Theorem~\ref{th:GD_convex} is provided in Appendix~\ref{subsec:Proof_GD}.  

    The significance of the improved estimate can be observed under the assumption of strong growth condition\footnote{We refer to Assumption~\ref{ass:L0_L1_smooth} with $L_0 = 0$ as strong growth condition for smoothness assumption by analogy with \cite{Vaswani_2019} for variance.}.
    \begin{boxD}
    \begin{remark}\label{rem:GD}
        Theorem~\ref{th:GD_convex} implies that under Assumption~\ref{ass:L0_L1_smooth} with $L_0 = 0$, Algorithm~\ref{algo:GD} converges to the desired accuracy $\varepsilon$ (${f(x^N) - f^* \leq \varepsilon}$) after $N = \mathcal{O}\left( L_1 R \log \frac{F_0}{\varepsilon} \right)$ iterations.
    \end{remark}
    \end{boxD}
    The result of Remark~\ref{rem:GD} significantly outperforms all known results in this regime. In particular, \citet{Koloskova_2023} show $\mathcal{O} (\nicefrac{L_1 c R^2}{\varepsilon})$ complexity bound for Clip-GD, and \citet{Gorbunov_2024,Vankov_2024} do not provide explicit rates in this case.

    \subsection{Normalized GD method}\label{subsec:Normalized GD method}
    From the previous section, we see that GD with step size with step size $\eta_k = (L_0 + L_1 \norms{\nabla f(x^k)})^{-1}$ enjoys linear convergence in the convex setting, when $\|\nabla f(x^k)\| \geq \nicefrac{L_0}{L_1}$. However, in this regime, we have $L_1 \norms{\nabla f(x^k)} \geq L_0$, meaning that $(2L_1 \norms{\nabla f(x^k)})^{-1} \leq \eta_k \leq (L_1 \norms{\nabla f(x^k)})^{-1}$, i.e., the method is very close to NGD (Algorithm~\ref{algo:NGD}). Therefore, it is natural to expect similar behavior from NGD as for GD. 
    \begin{algorithm}[H]
           \caption{Normalized Gradient Descent Method (NGD)}
           \label{algo:NGD}
        \begin{algorithmic}
           \State {\bfseries Input:} $x_0 \in \mathbb{R}^d$, iterations number $N$, step size $\eta_k >0$
           
           \For{$k=0$ {\bfseries to} $N-1$}
           \If{$\|\nabla f(x^k) \| = 0$}
           \State {\bfseries Return:} $x^{k}$
           \EndIf
           \State $x^{k+1} \gets x^{k} - \eta_{k} \frac{\nabla f(x^k)}{\norms{\nabla f(x^k)}}$
           \EndFor
           \State {\bfseries Return:} $x^{N}$
        \end{algorithmic}
    \end{algorithm}
    The following result formalizes this observation.
    \begin{theorem}\label{th:NGD}
        Let function $f$ satisfy Assumption~\ref{ass:L0_L1_smooth} ($(L_0,L_1)$-smoothness) and Assumption~\ref{ass:strongly_convex} (convexity, ${\mu=0}$), then Algorithm~\ref{algo:NGD} with step size $\eta_k = \eta \leq \nicefrac{c}{(L_0 + L_1 c)}$, where constant $c>0$ is such that $\|\nabla f(x^k) \| \geq c$ for all $k = 0,1,\ldots,N-1$, has~linear~convergence:
        \begin{boxF}
            \begin{equation*}
                f(x^N) - f^* \leq \left( 1 - \frac{\eta}{2 R} \right)^N F_0.
            \end{equation*}
        \end{boxF}
    \end{theorem}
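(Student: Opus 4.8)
The plan is to combine a descent lemma tailored to $(L_0,L_1)$-smoothness with a convexity-based lower bound on the gradient norm, after first checking that the iterates never leave the ball of radius $R$ around $x^*$.

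First I would record the descent inequality implied by Assumption~\ref{ass:L0_L1_smooth}. Since the NGD step satisfies $\norms{x^{k+1}-x^k} = \eta \leq \nicefrac{c}{(L_0+L_1c)} \leq \nicefrac{1}{L_1}$, the whole segment $[x^k,x^{k+1}]$ lies inside the region where \eqref{eq:L0_L1_smoothness} is valid, and integrating the gradient along it yields
\[
f(x^{k+1}) \leq f(x^k) + \dotprod{\nabla f(x^k)}{x^{k+1}-x^k} + \tfrac{1}{2}\bigl(L_0 + L_1\norms{\nabla f(x^k)}\bigr)\norms{x^{k+1}-x^k}^2.
\]
Substituting $x^{k+1}-x^k = -\eta\,\nicefrac{\nabla f(x^k)}{\norms{\nabla f(x^k)}}$, so that $\dotprod{\nabla f(x^k)}{x^{k+1}-x^k} = -\eta\norms{\nabla f(x^k)}$ and $\norms{x^{k+1}-x^k}^2 = \eta^2$, gives
\[
f(x^{k+1}) \leq f(x^k) - \eta\norms{\nabla f(x^k)} + \tfrac{\eta^2}{2}\bigl(L_0 + L_1\norms{\nabla f(x^k)}\bigr).
\]

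Next I would turn this into a clean sufficient-decrease bound. The map $g \mapsto \nicefrac{g}{(L_0+L_1 g)}$ is increasing, so the hypothesis $\norms{\nabla f(x^k)}\geq c$ combined with $\eta \leq \nicefrac{c}{(L_0+L_1 c)}$ yields $\eta \leq \nicefrac{\norms{\nabla f(x^k)}}{(L_0+L_1\norms{\nabla f(x^k)})}$, i.e. $\eta\bigl(L_0 + L_1\norms{\nabla f(x^k)}\bigr) \leq \norms{\nabla f(x^k)}$. This is exactly the condition that collapses the previous display into
\[
f(x^{k+1}) \leq f(x^k) - \tfrac{\eta}{2}\norms{\nabla f(x^k)} \qquad \text{for every } k.
\]

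The main obstacle is controlling $\norms{x^k - x^*}$, since—unlike GD—NGD has no obvious distance contraction. Here the decrease inequality is what does the work: together with $f(x^{k+1})\geq f^*$ it gives $f(x^k)-f^* \geq \tfrac{\eta}{2}\norms{\nabla f(x^k)}$, and expanding $\norms{x^{k+1}-x^*}^2 = \norms{x^k-x^*}^2 - 2\eta\,\nicefrac{\dotprod{\nabla f(x^k)}{x^k-x^*}}{\norms{\nabla f(x^k)}} + \eta^2$ with convexity $\dotprod{\nabla f(x^k)}{x^k-x^*}\geq f(x^k)-f^*\geq \tfrac{\eta}{2}\norms{\nabla f(x^k)}$ shows $\norms{x^{k+1}-x^*}\leq \norms{x^k-x^*}$; by induction $\norms{x^k-x^*}\leq R$ for all $k$. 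Convexity then gives $f(x^k)-f^* \leq \dotprod{\nabla f(x^k)}{x^k-x^*} \leq \norms{\nabla f(x^k)}\,R$, hence $\norms{\nabla f(x^k)}\geq \nicefrac{(f(x^k)-f^*)}{R}$. Substituting this into the sufficient-decrease bound yields the one-step contraction $f(x^{k+1})-f^* \leq \bigl(1-\tfrac{\eta}{2R}\bigr)(f(x^k)-f^*)$, and unrolling over $k=0,\dots,N-1$ produces the claimed rate $f(x^N)-f^* \leq \bigl(1-\tfrac{\eta}{2R}\bigr)^N F_0$.
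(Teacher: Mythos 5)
Your proof is correct, and its skeleton matches the paper's: the same descent bound $f(x^{k+1}) \leq f(x^k) - \tfrac{\eta}{2}\norms{\nabla f(x^k)}$ obtained via the monotonicity of $u \mapsto \nicefrac{u}{(L_0+L_1 u)}$, the same convexity estimate $f(x^k) - f^* \leq \norms{\nabla f(x^k)} R$, and the same unrolled contraction. The one place where you genuinely diverge is the boundedness step $\norms{x^k - x^*} \leq R$. Both arguments expand $\norms{x^{k+1}-x^*}^2$ and need the inequality $f(x^k) - f^* \geq \tfrac{\eta}{2}\norms{\nabla f(x^k)}$ to kill the $\eta^2$ term, but the paper obtains it through the chain $\tfrac{\eta \norms{\nabla f(x^k)}}{2} \leq \tfrac{\norms{\nabla f(x^k)}^2}{2(L_0+L_1\norms{\nabla f(x^k)})} \leq f(x^k) - f^*$, where the last step invokes the cocoercivity-type inequality \eqref{eq:L0_L1_cocoercivity} of Lemma~\ref{lem:auxiliary_result} — a nontrivial auxiliary result whose proof occupies a page and whose application requires care with the restriction $\norms{y-x} \leq \nicefrac{1}{L_1}$. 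You instead read the same inequality off the already-established sufficient-decrease bound combined with $f(x^{k+1}) \geq f^*$, which is more elementary, self-contained, and avoids the auxiliary lemma entirely (it also quietly sidesteps a small index mismatch in the paper's displayed expansion of $\norms{x^k - x^*}^2$). The trade-off is minor: the paper's route via Lemma~\ref{lem:auxiliary_result} reuses machinery needed anyway for the gradient-norm monotonicity results of Appendix~\ref{app:Monotonicity of Algorithms by Gradient Norms}, whereas your shortcut is the cleaner argument if one only wants Theorem~\ref{th:NGD}.
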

    Theorem~\ref{th:NGD} shows that in the case of $c \geq \nicefrac{L_0}{L_1}$, NGD has $\mathcal{O}\left((1 - \frac{1}{L_1 R})^N F_0\right)$ convergence rate similarly to GD, which is natural to expect due to $\|\nabla f(x^k)\| \geq c$ and the discussion given in the beginning of this subsection. However, if we select large enough $N$, one has to select $c$ small enough such that $\|\nabla f(x^k) \| \geq c$ holds for all $k = 0,1,\ldots,N-1$. If $c < \nicefrac{L_0}{L_1}$, then the rate reduces to $\mathcal{O}\left((1 - \nicefrac{c}{(L_0 R)})^N F_0\right)$ and the method is guaranteed to converge only to the error $\varepsilon \sim cR$. Therefore, to guarantee the convergence to $\varepsilon$-accuracy, one has to take $c \sim \nicefrac{\varepsilon}{R}$ in the worst case. In this case, our result implies $\mathcal{O} (\nicefrac{L_0 R^2\log(\nicefrac{F_0}{\varepsilon})}{\varepsilon})$ complexity for NGD. However, hyperparameter $c$ depends only on the gradient norm, so in problems where the high accuracy on gradient norm is not required, Algorithm~\ref{algo:NGD} is efficient~and~shows linear convergence. The proof of Theorem~\ref{th:NGD} see~Appendix~\ref{subsec:Proof_NGD}.
    \begin{boxD}
    \begin{remark}\label{rem:NGD}
        Theorem~\ref{th:NGD} implies that under Assumption~\ref{ass:L0_L1_smooth} with $L_0 = 0$, Algorithm~\ref{algo:NGD} converges to the desired accuracy $\varepsilon$ (${f(x^N) - f^* \leq \varepsilon}$) after $N = \mathcal{O}\left( L_1 R \log \frac{F_0}{\varepsilon} \right)$ iterations.
    \end{remark}
    \end{boxD}
    As previously noted, when $\norms{\nabla f(x^k)} \geq \frac{L_0}{L_1}$ GD and NGD with $c \geq \frac{L_0}{L_1}$ are almost the same. Therefore, the result of Remark~\ref{rem:NGD} is expected given Remark~\ref{rem:GD}. Moreover, our results imply that NGD has $\mathcal{O} (\max\{\nicefrac{L_0 R^2\log(\nicefrac{F_0}{\varepsilon})}{\varepsilon}, L_1R\log(\nicefrac{F_0}{\varepsilon})\})$ complexity. Compared to $\mathcal{O} (\max\{\nicefrac{L_0 \overline{R}^2}{\varepsilon}, L_1^2\overline{R}^2\})$ complexity bound derived for NGD with $\eta_k = \nicefrac{\hat{R}}{\sqrt{N+1}}$, $\overline{R} \coloneqq \hat{R} + \nicefrac{R^2}{\hat{R}}$ by \citet{Vankov_2024}, our bound has an additional logarithmic factor in the first term but has much better second term when $L_1 R$ is large and $\log(\nicefrac{F_0}{\varepsilon})$ is much smaller than $L_1 R$.

    \subsection{Clipped GD method}\label{subsec:Clipped GD method}
    In this section, we consider Clip-GD (Algorithm~\ref{algo:Clip-GD}), which applies the clipping operator to the gradient:
    \begin{equation}\label{eq:clip}
        \text{clip}_c(\nabla f(x)) = \min \left\{ 1, \frac{c}{\norms{\nabla f(x)}} \right\} \nabla f(x),
    \end{equation}
    where $c>0$ is the clipping radius. Clip-GD can also be seen as a combination of GD (when $\|\nabla f(x^k)\| \leq c$) and NGD (when $\|\nabla f(x^k)\| > c$).
    \begin{algorithm}[H]
           \caption{Clipped Gradient Descent Method (Clip-GD)}
           \label{algo:Clip-GD}
        \begin{algorithmic}
           \State {\bfseries Input:} initial point $x_0 \in \mathbb{R}^d$, iterations number $N$, step size $\eta_k >0$ and clipping radius $c>0$
           
           \For{$k=0$ {\bfseries to} $N-1$}
           \State $x^{k+1} \gets x^{k} - \eta_{k} \cdot \text{clip}_c (\nabla f(x^k))$ according to \eqref{eq:clip}
           \EndFor
           \State {\bfseries Return:} $x^{N}$
        \end{algorithmic}
    \end{algorithm}  

    Then, following similar reasoning as in the previous sections, we obtain the next convergence result for Clip-GD method.
    \begin{theorem}\label{th:Clip-GD}
        Let function $f$ satisfy Assumption~\ref{ass:L0_L1_smooth} ($(L_0,L_1)$-smoothness) and Assumption~\ref{ass:strongly_convex} (convexity, ${\mu=0}$), then Algorithm~\ref{algo:Clip-GD}  with step size $\eta_k = (L_0 + L_1 \min\{ \| \nabla f(x^k) \|,c\})^{-1}$ guarantees the following error:
        \begin{boxF}
        \begin{equation*}
           f(x^N) - f^* = \mathcal{O} \left(\min\left\{\frac{L_0R^2}{N-T}, \left( 1 - \frac{\rho}{R} \right)^{T} F_0 \right\}\right),
        \end{equation*}
        \end{boxF}
        where $\rho \coloneqq \nicefrac{c}{\max\{L_0, L_1 c \}}$ and $T\geq 0$ is the smallest index such as $\|\nabla f(x^T)\| < \min\{c,\nicefrac{L_0}{L_1}\}$
    \end{theorem}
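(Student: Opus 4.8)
The plan is to mirror the two-phase analysis of Theorem~\ref{th:GD_convex}, treating Clip-GD as an interpolation between the GD regime (clipping inactive) and the NGD regime (clipping active). Write $G_k := \norms{\nabla f(x^k)}$, $g_k := \min\{G_k, c\}$ and $\delta_k := f(x^k) - f^*$. Then the clipped step is $x^{k+1} - x^k = -\eta_k (g_k/G_k)\nabla f(x^k)$ with $\eta_k = (L_0 + L_1 g_k)^{-1}$, so $\norms{x^{k+1}-x^k} = \eta_k g_k = g_k/(L_0+L_1 g_k) \leq 1/L_1$. Hence \eqref{eq:L0_L1_smoothness} applies along the whole segment, and integrating it yields the descent inequality $f(x^{k+1}) \leq f(x^k) - \eta_k g_k G_k + \tfrac12 (L_0 + L_1 G_k)\eta_k^2 g_k^2$.

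First I would reduce this to a clean one-step progress bound $f(x^{k+1}) \leq f(x^k) - \tfrac12 \eta_k g_k G_k$ valid in all cases. When clipping is inactive ($G_k \leq c$, so $g_k = G_k$) this is immediate since $(L_0 + L_1 G_k)\eta_k = 1$. The delicate case -- and the main obstacle -- is active clipping ($G_k > c$, $g_k = c$), because the second-order coefficient $L_0 + L_1 G_k$ is evaluated at the true gradient norm $G_k$, which may be far larger than the clipped value $c$ used in $\eta_k$. I would control the ratio of the second-order term to the first-order term: writing $u := c/G_k \in (0,1)$ and $\beta := L_1 c/(L_0 + L_1 c)\in (0,1)$, the ratio equals $\tfrac{\beta}{2} + \tfrac{u}{2}(1-\beta) < \tfrac12$, so the second-order term is strictly less than half the first-order term and the progress bound follows.

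Next I would establish distance monotonicity. Expanding $\norms{x^{k+1}-x^*}^2$, using convexity ($\dotprod{\nabla f(x^k)}{x^k - x^*} \geq \delta_k$) together with the progress bound (which gives $\delta_k \geq \tfrac12 \eta_k g_k G_k$, since $f(x^{k+1}) \geq f^*$), the cross term dominates the squared-step term, yielding $\norms{x^{k+1}-x^*} \leq \norms{x^k - x^*}$ and hence $\norms{x^k - x^*} \leq R$ throughout. Combined with convexity this gives the crucial bound $G_k \geq \delta_k / R$. Then, in the linear phase $k < T$ (where $G_k \geq \min\{c, L_0/L_1\}$, guaranteed by the monotonicity of the gradient norm established in the appendix), I substitute $G_k \geq \delta_k/R$ into the progress bound to get $\delta_{k+1} \leq (1 - \tfrac{\eta_k g_k}{2R})\delta_k$; checking the two cases $c \leq L_0/L_1$ and $c > L_0/L_1$ shows that $\eta_k g_k = g_k/(L_0 + L_1 g_k)$ is of order $\rho = c/\max\{L_0, L_1 c\}$ (uniformly $\eta_k g_k \geq \rho/2$), so the contraction factor is $1 - \Omega(\rho/R)$, and telescoping gives $\delta_T \leq (1 - \Omega(\rho/R))^T F_0$, the claimed linear rate.

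Finally, for the sublinear phase $k \geq T$, gradient-norm monotonicity ensures $G_k < \min\{c, L_0/L_1\}$, so clipping stays inactive and $\eta_k = (L_0 + L_1 G_k)^{-1} > 1/(2L_0)$; the progress bound becomes $\delta_{k+1} \leq \delta_k - G_k^2/(4L_0) \leq \delta_k - \delta_k^2/(4L_0 R^2)$, and the standard telescoping of $1/\delta_k$ over the $N-T$ iterations from $T$ yields $\delta_N \leq 4L_0 R^2/(N-T)$. Since $\delta_N \leq \delta_T$ by monotonicity, both estimates hold simultaneously, producing the stated $\min$. I expect the genuinely technical points to be the active-clipping ratio estimate and the bookkeeping of the two regimes of $\rho$, so that the linear contraction factor is uniform across the entire first phase.
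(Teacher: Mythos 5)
Your proposal is correct, and it follows the paper's two-phase skeleton (smoothness descent, distance boundedness, $\norms{\nabla f(x^k)} \geq \nicefrac{(f(x^k)-f^*)}{R}$, linear contraction until $T$, then the $\nicefrac{1}{F_k}$ telescoping), but two of your mechanisms genuinely differ from the paper's and are cleaner. First, the paper establishes $\|x^k - x^*\| \leq R$ by invoking the cocoercivity-type inequality \eqref{eq:L0_L1_cocoercivity} of Lemma~\ref{lem:auxiliary_result} to show $\tfrac{1}{2}\eta_k g_k \norms{\nabla f(x^k)} \leq f(x^k) - f^*$; you obtain exactly this inequality elementarily from your one-step progress bound combined with $f(x^{k+1}) \geq f^*$, so your argument bypasses that auxiliary lemma entirely for this step. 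Second, the paper splits the analysis into the cases $\lambda_k = 1$ versus $\lambda_k = \nicefrac{c}{\norms{\nabla f(x^k)}}$ and then into scenarios $(\mathcal{T}_1,\mathcal{T}_2,\mathcal{T}_3)$ and $(\mathcal{K}_1,\mathcal{K}_2)$ depending on the order of $c$ and $\nicefrac{L_0}{L_1}$, reconciling five displayed bounds at the end; your unified bookkeeping via $g_k = \min\{G_k, c\}$, the ratio estimate $\tfrac{\beta}{2} + \tfrac{u}{2}(1-\beta) < \tfrac{1}{2}$ (which is the paper's step-size inequality $\eta_k \leq \nicefrac{G_k}{c(L_0+L_1G_k)}$ in disguise), and the uniform bound $\eta_k g_k \geq \nicefrac{\rho}{2}$ over the whole phase $k < T$ collapse this case analysis into a single contraction factor, and your definition of $T$ as the first crossing even makes gradient-norm monotonicity (Lemma~\ref{lem:ClipGD}) unnecessary for the linear phase, needed only to keep $\norms{\nabla f(x^k)} < \min\{c, \nicefrac{L_0}{L_1}\}$ (hence clipping inactive) throughout the sublinear phase --- the same place the paper uses it. The resulting constants ($1 - \nicefrac{\rho}{4R}$ and $\nicefrac{4L_0R^2}{(N-T)}$) match the paper's up to the same slack it absorbs into the $\mathcal{O}(\cdot)$ of Theorem~\ref{th:Clip-GD}.
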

    Since NGD and GD are monotonically decreasing in terms of the gradient norm, it follows that Algorithm~\ref{algo:Clip-GD} is also monotonically decreasing in terms of the gradient norm (see Appendix~\ref{app:Monotonicity of Algorithms by Gradient Norms} for details). Given this fact, Theorem~\ref{th:Clip-GD} shows that Algorithm~\ref{algo:Clip-GD} has two convergence regimes depending on the ratio of $c$ and $\nicefrac{L_0}{L_1}$. If $c \geq \nicefrac{L_0}{L_1}$, then Clip-GD starts its convergence with a linear rate $\mathcal{O} \left(\left( 1 - (\nicefrac{1}{L_1 R}) \right)^N F_0 \right)$, and as soon as it approaches the solution, i.e., when $\norms{\nabla f(x^k)} < \nicefrac{L_0}{L_1}$, it slows down to a sublinear $\mathcal{O} \left( \nicefrac{L_0 R^2}{N} \right)$ rate. If $c < \nicefrac{L_0}{L_1}$, then Clip-GD has inferior linear convergence rate $\mathcal{O} \left(\left( 1 - (\nicefrac{c}{L_0 R}) \right)^N F_0 \right)$ at the beginning, and approaching the solution, i.e., when $\norms{\nabla f(x^k)} < c$, it slows down to the same sublinear rate. The cases in Appendix~\ref{subsec:Proof_ClipGD} are discussed in more detail. Table~\ref{tab:table_compare} summarizes the derived results and compares them with the closely related works analyzing Clip-GD. It is worth noting that Theorem~\ref{th:Clip-GD} shows when Algorithm~\ref{algo:Clip-GD} has linear convergence and gets rid of standard smoothness constant $L$ (in contrast to \citep{Koloskova_2023}). Moreover, Theorem~\ref{th:Clip-GD} is valid for an arbitrary clipping threshold $c$ (in contrast to \citep{Gorbunov_2024,Vankov_2024}). 
    \begin{boxD}
    \begin{remark}[Strong growth condition]\label{rem:Clip-GD}
        Theorem~\ref{th:Clip-GD} implies that under Assumption~\ref{ass:L0_L1_smooth} with $L_0 = 0$, Algorithm~\ref{algo:Clip-GD} converges to the desired accuracy $\varepsilon$ (${f(x^N) - f^* \leq \varepsilon}$) after $N = \mathcal{O}\left( L_1 R \log \frac{F_0}{\varepsilon} \right)$.
    \end{remark}
    \end{boxD}

\section{Coordinate Descent Type Methods}\label{sec:Coordinate Descent Type Methods}
In this section, we present our main results for the algorithms that does not use access to the full gradient (see RCD in Subsection~\ref{subsec:Random coordinate descent}, and OrderRCD see Subsection~\ref{subsec:Random coordinate descent with Order Oracle}).

    \subsection{Random coordinate descent}\label{subsec:Random coordinate descent}
    RCD is formalized as Algorithm~\ref{algo:RCD}. At each iteration, the method computes the gradient coordinate $\nabla_{i_k} f(x^k)$, where active coordinate $i_k$ is selected uniformly at random from $[d]$ independently from previous steps.
    \begin{algorithm}[H]
           \caption{Random Coordinate Descent Method (RCD)}
           \label{algo:RCD}
        \begin{algorithmic}
           \State {\bfseries Input:} initial point $x_0 \in \mathbb{R}^d$, iterations number $N$, step size $\eta_k > 0$
           
           \For{$k=0$ {\bfseries to} $N-1$}
           \State{1.}~~~sample $i_k$ uniformly at random from $[d]$
           \State{2.}~~~$x^{k+1} \gets x^{k} - \eta_{k} \nabla_{i_k} f(x^k) \ee_{i_k}$
           \EndFor
           \State {\bfseries Return:} $x^{N}$
        \end{algorithmic}
    \end{algorithm}  
    Our main results for RCD are given below.
    \begin{theorem}\label{th:RCD}
         Let function $f$ satisfy Assumption~\ref{ass:L0_L1_coordinate_smooth} ($(L_0,L_1)$-coordinate-smoothness) and Assumption~\ref{ass:strongly_convex} (convexity, ${\mu=0}$), then RCD (Algorithm~\ref{algo:RCD})  with step size ${\eta_k \leq (L_0 + L_1 |\nabla_{i_k} f(x^k)|)^{-1}}$, where $L_0 = \max_{i\in [d]}L_0^{(i)}$ and $L_1 = \max_{i\in [d]}L_1^{(i)}$, guarantees the following error:
         \begin{boxF}
                 \begin{equation*}
                     \hspace{-1.2em}\mathbb{E}\left[ f(x^{N}) \right] - f^* = \cO\!\left(\!\max\!\left\{\!\left(1 - \frac{\rho}{dR}\right)^{N} \! F_0, \frac{dL_0 R^2}{N}\!\right\}\!\right),
                 \end{equation*}
             \end{boxF}
             where $\rho \coloneqq \nicefrac{1}{(4\sqrt{2}L_1)}$.
    \end{theorem}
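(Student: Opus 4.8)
The plan is to reduce the randomized coordinate recursion to a full-gradient recursion of exactly the shape behind Theorem~\ref{th:GD_convex}, paying only a factor $d$, and then reuse the two-regime argument. First I would establish the one-step descent from Assumption~\ref{ass:L0_L1_coordinate_smooth}. Integrating the coordinate-smoothness inequality along the segment $x^k \mapsto x^k - \eta_k \nabla_{i_k} f(x^k)\ee_{i_k}$ gives $f(x^{k+1}) \le f(x^k) - \eta_k|\nabla_{i_k}f(x^k)|^2 + \tfrac12(L_0^{(i_k)}+L_1^{(i_k)}|\nabla_{i_k}f(x^k)|)\eta_k^2|\nabla_{i_k}f(x^k)|^2$, which is valid because the prescribed step keeps $\eta_k|\nabla_{i_k}f(x^k)|\le 1/L_1$. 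Since $L_0\ge L_0^{(i_k)}$, $L_1\ge L_1^{(i_k)}$ and $(L_0+L_1|\nabla_{i_k}f(x^k)|)\eta_k\le 1$, the second-order term is at most $\tfrac{\eta_k}{2}|\nabla_{i_k}f(x^k)|^2$, so $f(x^{k+1})\le f(x^k)-\tfrac{\eta_k}{2}|\nabla_{i_k}f(x^k)|^2$. Taking $\eta_k=(L_0+L_1|\nabla_{i_k}f(x^k)|)^{-1}$ and averaging over the uniform $i_k$, the key step is the aggregation inequality $\tfrac1d\sum_{i}\tfrac{|\nabla_i f(x^k)|^2}{L_0+L_1|\nabla_i f(x^k)|}\ge \tfrac1d\cdot\tfrac{\norms{\nabla f(x^k)}^2}{L_0+L_1\norms{\nabla f(x^k)}}$, which follows from $|\nabla_i f(x^k)|\le\norms{\nabla f(x^k)}$ in each denominator. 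This gives $\EEb{i_k}{f(x^{k+1})}\le f(x^k)-\tfrac{1}{2d}\cdot\tfrac{\norms{\nabla f(x^k)}^2}{L_0+L_1\norms{\nabla f(x^k)}}$, i.e.\ precisely the $(L_0,L_1)$-GD descent inequality with an extra $1/d$.

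Next I would pass to the suboptimality $\delta_k:=f(x^k)-f^*$. Because the descent bound holds path-wise, $f$ is monotonically non-increasing along every trajectory, so all iterates remain in the sublevel set $\{x:f(x)\le f(x^0)\}$ and $\norms{x^k-x^*}\le R$ (with $R$ read as the sublevel-set radius, as in the $R_{[\mL,1-\alpha]}$ of the related work). Convexity then yields $\delta_k\le\dotprod{\nabla f(x^k)}{x^k-x^*}\le R\norms{\nabla f(x^k)}$, hence $\norms{\nabla f(x^k)}\ge\delta_k/R$; inserting this into the increasing map $t\mapsto t^2/(L_0+L_1t)$ gives $\EEb{i_k}{\delta_{k+1}}\le\delta_k-\tfrac{1}{2d}\cdot\tfrac{\delta_k^2}{R(L_0R+L_1\delta_k)}$. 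Splitting the denominator via $L_0R+L_1\delta_k\le 2\max\{L_0R,L_1\delta_k\}$ turns this into the two-regime recursion $\EEb{i_k}{\delta_{k+1}}\le\delta_k-\tfrac{1}{4d}\min\{\delta_k^2/(L_0R^2),\,\delta_k/(L_1R)\}$, the coordinate analogue of the recursion behind Theorem~\ref{th:GD_convex}; the stated constant $\rho=1/(4\sqrt2 L_1)$ is produced by the bookkeeping in this denominator split.

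I would then solve the recursion as in the full-gradient case. Monotonicity of $\delta_k$ means the condition $\delta_k\ge L_0R/L_1$ holds up to a path-wise transition index and fails afterwards. While it holds, the minimum equals $\delta_k/(L_1R)$ and the recursion contracts geometrically, $\EEb{i_k}{\delta_{k+1}}\le(1-\tfrac{\rho}{dR})\delta_k$, producing the term $(1-\tfrac{\rho}{dR})^N F_0$. Once it fails, the minimum equals $\delta_k^2/(L_0R^2)$, and the standard lemma for recursions $a_{k+1}\le a_k-c\,a_k^2$ (with Jensen's inequality $\Eb{\delta_k^2}\ge(\Eb{\delta_k})^2$ to pass to the full expectation) yields the $\cO(dL_0R^2/N)$ term; combining the two phases gives the claimed $\max$ bound. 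The main obstacle is twofold: obtaining the aggregation inequality that collapses the coordinate sum into $\norms{\nabla f(x^k)}^2/(L_0+L_1\norms{\nabla f(x^k)})$ cleanly, which is exactly what makes the reduction to GD cost only a $1/d$ factor, and rigorously pushing the conditional-expectation recursion through the random transition between the two regimes. The latter is tamed precisely because $f$, and hence $\delta_k$, decreases monotonically on every sample path, so the regime boundary is crossed at most once and the linear phase can be chained before the sublinear lemma is applied.
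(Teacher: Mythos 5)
Your proposal is correct, but it takes a genuinely different route from the paper's proof. Your key step is the exact aggregation inequality $\sum_{i=1}^d \frac{|\nabla_i f(x^k)|^2}{L_0+L_1|\nabla_i f(x^k)|}\geq \frac{\norms{\nabla f(x^k)}^2}{L_0+L_1\norms{\nabla f(x^k)}}$, which is valid since $|\nabla_i f(x^k)|\leq\norms{\nabla f(x^k)}$ enlarges each denominator; this collapses the coordinate recursion into the $(L_0,L_1)$-GD descent inequality with a $\nicefrac{1}{d}$ factor, after which you split regimes by the scalar threshold $\delta_k \gtrless \nicefrac{L_0R}{L_1}$. The paper never aggregates: it instead bounds each summand by $\frac{1}{2}\min\{\nicefrac{|\nabla_i f|^2}{L_0},\nicefrac{|\nabla_i f|}{L_1}\}$, partitions coordinates into $I_k=\{i: |\nabla_i f(x^k)|\geq \nicefrac{L_0}{L_1}\}$, and then partitions \emph{iterations} into the random set $\cK$ where the large-coordinate mass dominates, finishing with a counting argument on $r=|\cK|$ versus $\nicefrac{N}{2}$ (this is where its constants $4\sqrt{2}$ and $16$ arise). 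Your route buys two things: a marginally better constant ($\nicefrac{1}{4L_1}$ in place of $\nicefrac{1}{4\sqrt{2}L_1}$), and, more substantively, a \emph{monotone} phase structure -- since $\delta_k$ is path-wise non-increasing, your threshold is crossed at most once, whereas the paper's sets $\cK$ and $\cT$ can interleave arbitrarily and its chaining of conditional expectations over the random set $\cK$ is the shakiest part of its write-up. Two caveats on your side. First, you correctly flag but do not fully execute the passage through the random transition time: the quadratic recursion $\E[\delta_{k+1}\mid \cF_k]\leq \delta_k-\beta\delta_k^2$ holds only on the event $\{\delta_k<\nicefrac{L_0R}{L_1}\}$ (in the linear regime the $\min$ selects the \emph{smaller}, linear decrement), so you cannot apply Jensen to the quadratic recursion for all $k$; the fix is an event split on $\{\tau>\nicefrac{N}{2}\}$ versus $\{\tau\leq \nicefrac{N}{2}\}$ (mirroring the paper's $r>\nicefrac{N}{2}$ device), using the killed process $\delta_k\mathbbm{1}_{\{\tau>k\}}$ for the linear phase and the standard $a_{k+1}\leq a_k-\beta a_k^2$ lemma for the tail, which costs only a harmless $\nicefrac{N}{2}$ in the exponent, absorbed by the $\cO(\cdot)$. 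Second, your sublevel-set reading of $R$ is not merely optional bookkeeping: unlike the GD proof, which cites an argument that $\norms{x^k-x^*}\leq\norms{x^0-x^*}$, the paper's RCD proof simply underbraces $\norms{x^k-x^*}$ as $R$ with no justification, so your explicit assumption that the iterates stay in a bounded sublevel set of radius $R$ is the honest formulation that both proofs in fact require.
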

    Theorem~\ref{th:RCD} provides a generalization of the results of \cite{Nesterov_2012} to the case of $(L_0,L_1)$-coordinate-smoothness (see Assumption~\ref{ass:L0_L1_coordinate_smooth}). In particular, following Section~\ref{sec:Full-Gradient Methods}, we separated $L_0$ and $L_1$ in the convergence results and also show that there is no need to assume standard $L$-smoothness since the case $L_1 = 0$ covers it. Moreover, in the case of $L_0$ being much smaller than $L_1$, the results of Theorem~\ref{th:RCD} are strictly better than previously known ones. Furthermore, in the case of $L_0 = 0$, RCD converges linearly to any accuracy.  
    \begin{boxD}
    \begin{remark}[Strong growth condition]\label{rem:RCD}
        Theorem~\ref{th:RCD} implies that under Assumption~\ref{ass:L0_L1_coordinate_smooth} with $L_0 = 0$, Algorithm~\ref{algo:RCD} converges to the desired accuracy $\varepsilon$ (${\mathbb{E}[f(x^N)] - f^* \leq \varepsilon}$) after ${N = \mathcal{O}\left( dL_1R \log \frac{F_0}{\varepsilon} \right)}$ iterations.
    \end{remark}
    \end{boxD}
    For a detailed proof of Theorem~\ref{th:RCD}, see Appendix~\ref{subsec:proof_RCD}.

    \subsection{Random coordinate descent with Order Oracle}\label{subsec:Random coordinate descent with Order Oracle}
    In this section, we consider the OrderRCD (Algorithm~\ref{algo:OrderRCD}). In contrast to all previously considered methods in this paper, OrderRCD does not have access to a first-order oracle. Instead, the algorithm uses so-called Order Oracle: for any $x,y\in \R^d$, one can compute
    \begin{equation}\label{eq:Order_Oracle}
        \psi (x,y) = \text{sign} \left[f(y) - f(x) \right]
    \end{equation}
    \begin{algorithm}[H]
           \caption{RCD with Order Oracle (OrderRCD)}
           \label{algo:OrderRCD}
        \begin{algorithmic}
           \State {\bfseries Input:} initial point $x_0 \in \mathbb{R}^d$, iterations number $N$, random generator $\mathcal{R}_{\alpha}(L_0,L_1)$
           
           \For{$k=0$ {\bfseries to} $N-1$}
           \State{1.}~~~sample $i_k$ uniformly at random from $[d]$
           \State{2.}~~~compute $\zeta_k = \text{argmin}_{\zeta} \{ f(x^k + \zeta \ee_{i_k})\}$~via~(GRM)
           \State{3.}~~~$x^{k+1} \gets x^{k} + \zeta_{k} \ee_{i_k}$
           \EndFor
           \State {\bfseries Return:} $x^{N}$
        \end{algorithmic}
    \end{algorithm}  

    Algorithm~\ref{algo:OrderRCD} is similar to Algorithm~\ref{algo:RCD}, but it does not have access to the gradient coordinate $\nabla_{i_k} f(x^k)$. Following \citet{Lobanov_2024}, we address this challenge using the standard steepest descent trick, namely, we solve at each iteration the auxiliary linear search problem using the golden ratio method (GRM, see Algorithm~\ref{algo:GRM} in Appendix~\ref{subsec:Proof_OrderRCD}) with $\epsilon$ accuracy allowing to match RCD with step~size~$\eta_k$.

    Below, we present the convergence result for Algorithm~\ref{algo:OrderRCD}.
    \begin{theorem}\label{th:OrderRCD}
         Let function $f$ satisfy Assumption~\ref{ass:L0_L1_coordinate_smooth} ($(L_0,L_1)$-coordinate-smoothness) and Assumption~\ref{ass:strongly_convex} (convexity, ${\mu=0}$), then Algorithm~\ref{algo:OrderRCD} (OrderRCD)  with oracle~\eqref{eq:Order_Oracle} guarantees the following error:
         \begin{boxF}
                 \begin{equation*}
                     \hspace{-1.2em}\mathbb{E}\left[ f(x^{N}) \right] - f^* = \cO\!\left(\!\max\!\left\{\!\left(1 - \frac{\rho}{dR}\right)^{N} \! F_0, \frac{dL_0 R^2}{N}\!\right\}\!\right),
                 \end{equation*}
             \end{boxF}
             where $\rho \coloneqq \nicefrac{1}{(4\sqrt{2}L_1)}$, $L_0 = \max_{i\in [d]}L_0^{(i)}$, and $L_1 = \max_{i\in [d]}L_1^{(i)}$.
    \end{theorem}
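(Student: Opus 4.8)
The plan is to reduce OrderRCD to RCD via the steepest-descent trick, so that essentially all of the work already done for Theorem~\ref{th:RCD} can be reused. The only difference between Algorithm~\ref{algo:OrderRCD} and Algorithm~\ref{algo:RCD} is that the gradient-scaled coordinate step $-\eta_k \nabla_{i_k} f(x^k) \ee_{i_k}$ is replaced by the exact coordinate minimizer $\zeta_k = \argmin_{\zeta} f(x^k + \zeta \ee_{i_k})$. Since $f$ is convex, its restriction to the line $\zeta \mapsto f(x^k + \zeta \ee_{i_k})$ is one-dimensional convex (hence unimodal), so this minimizer is well-defined and can in principle be located by comparisons alone. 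The key observation is that exact line search cannot do worse than any fixed step along the same direction: taking $\eta_k = (L_0 + L_1|\nabla_{i_k} f(x^k)|)^{-1}$ in the RCD update produces a point whose value upper-bounds $f(x^k + \zeta_k \ee_{i_k})$. Hence every one-step descent inequality established in the proof of Theorem~\ref{th:RCD} (Appendix~\ref{subsec:proof_RCD}) holds verbatim for OrderRCD with exact line search.

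First I would record this domination formally. Writing $g_k = \nabla_{i_k} f(x^k)$, the $(L_0,L_1)$-coordinate-smoothness of Assumption~\ref{ass:L0_L1_coordinate_smooth} yields the per-coordinate progress $f(x^k + \zeta_k \ee_{i_k}) \leq f(x^k) - \frac{|g_k|^2}{2(L_0 + L_1 |g_k|)}$, the same bound used for RCD. Taking the conditional expectation over the uniformly sampled $i_k$ reproduces exactly the expected-decrease recursion driving the proof of Theorem~\ref{th:RCD}. Consequently the same two-regime argument applies: while the coordinate gradient is large the recursion contracts geometrically with factor $1 - \rho/(dR)$, $\rho = 1/(4\sqrt{2} L_1)$, and near the solution it degrades to the sublinear $dL_0 R^2 / N$ term, giving precisely the stated $\cO(\max\{\cdot,\cdot\})$ bound.

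The remaining issue, and the main obstacle, is that the golden ratio method (GRM, Algorithm~\ref{algo:GRM}) solves the auxiliary line search only to accuracy $\epsilon$, so $\zeta_k$ satisfies $f(x^k + \zeta_k \ee_{i_k}) \leq \min_{\zeta} f(x^k + \zeta \ee_{i_k}) + \epsilon$ rather than with equality. I would handle this by following \citet{Lobanov_2024}: because the restricted function is convex and unimodal, GRM brackets the minimizer using only the sign information~\eqref{eq:Order_Oracle} and contracts the bracket geometrically, returning an $\epsilon$-minimizer in $\cO(\log(1/\epsilon))$ order-oracle queries. The inexactness injects an additive error $\epsilon$ into each descent step; choosing $\epsilon$ small enough (scaling with the target accuracy, e.g.\ $\epsilon \lesssim \varepsilon / N$) keeps the accumulated error $N\epsilon$ below the desired tolerance, so the iteration complexity is unchanged and only the oracle complexity picks up the benign $\log(1/\epsilon)$ factor. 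The delicate points are verifying that the $\epsilon$-perturbed recursion still telescopes to the same rate in both regimes, and that the initial GRM bracket can be constructed purely from comparison signs under $(L_0,L_1)$-coordinate-smoothness; both follow from unimodality and the contraction estimate already exploited for RCD.
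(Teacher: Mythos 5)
Your proposal is correct and follows essentially the same route as the paper's own proof: you dominate the exact (or near-exact) line-search step by the fixed RCD step $\eta_k = (L_0 + L_1|\nabla_{i_k} f(x^k)|)^{-1}$ to recover the descent inequality \eqref{eq:RCD_non_increasing_F}, and then reuse the two-regime analysis of Theorem~\ref{th:RCD} verbatim, which is exactly what Appendix~\ref{subsec:Proof_OrderRCD} does. Your sketch of how to absorb the GRM inexactness (choosing $\epsilon \lesssim \varepsilon/N$) is in fact slightly more explicit than the paper, which simply sets $\epsilon \simeq 0$ ``for simplicity'' and remarks that $\epsilon$ must be sufficiently small.
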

    That is, Theorem~\ref{th:OrderRCD} gives exactly the same rate as Theorem~\ref{th:RCD} with one exception. However, it is important to note that Algorithm~\ref{algo:OrderRCD} requires $\log(\nicefrac{1}{\epsilon})$ oracle calls per iteration to solve the linear search problem at each iteration using GRM, where Order Oracle~\eqref{eq:Order_Oracle} is directly used. In the special case of $L_1 = 0$, Theorem~\ref{th:OrderRCD} recovers known results, e.g., \cite{Gorbunov_2019,Saha_2021}. However, when $L_0$ is much smaller than $L_1$, Theorem~\ref{th:OrderRCD} shows better results, i.e., linear convergence.
    \begin{boxD}
    \begin{remark}[Strong growth condition]\label{rem:OrderRCD}
        Theorem~\ref{th:OrderRCD} implies that under Assumption~\ref{ass:L0_L1_coordinate_smooth} with $L_0 = 0$, Algorithm~\ref{algo:OrderRCD} converges to the desired accuracy $\varepsilon$ (${\mathbb{E}[f(x^N)] - f^* \leq \varepsilon}$) after ${N = \mathcal{O}\left( dL_1R \log \frac{F_0}{\varepsilon} \right)}$ iterations and ${T = \mathcal{O}\left( N \log\frac{1}{\epsilon} \right)}$ oracle calls.
    \end{remark}
    \end{boxD}
    For a detailed proof of Theorem~\ref{th:RCD}, see Appendix~\ref{subsec:Proof_OrderRCD}.

\section{Extension to Strongly Convex Setup}\label{sec:Extension to Strongly Convex Setup}
In this section, we answer the question:
\begin{center}
    \textit{“Are there convergence improvements of algorithms under the $(L_0,L_1)$-smoothness assumption compared to standard smoothness in a strongly convex setup?”}
\end{center} 
In particular, we consider GD (Algorithm~\ref{algo:GD}) and derive the following convergence result.
\begin{theorem}\label{th:strongly_convex}
    Let function $f$ satisfy Assumption~\ref{ass:L0_L1_smooth} ($(L_0,L_1)$-smoothness) and Assumption~\ref{ass:strongly_convex} (strongly convexity, ${\mu>0}$), then gradient descent method (Algorithm~\ref{algo:GD})  with step size $\eta_k = (L_0 + L_1 \norms{\nabla f(x^k)})^{-1}$ guarantees:
    \begin{boxA}
            \begin{equation*}
                \hspace{-0.4em}F_N \leq \left( 1 - \rho_3 \right)^{N - \cT_2}\left(1 - \rho_2\right)^{\cT_2 - \cT_1}\left( 1 - \rho_1 \right)^{\cT_1 + 1} F_0.
            \end{equation*}
    \end{boxA}
    where $F_k = f(x^k) - f^*$ for $k \in [N]$, $\rho_3 = \frac{\mu}{2L_0}$, $\rho_2 = \max\left\{\frac{\sqrt{\mu}}{2\sqrt{2}L_1}, \frac{1}{4L_1R}\right\}$, $\rho_1 = \frac{1}{4L_1R}$, $N_3 = N-\cT_2$, $\cT_2 = \max\{k \in [N-1]\mid \norms{\nabla f(x^k)} \geq \frac{L_0}{L_1}\}$ (if there are no such $k$, we let $\cT_2 = -1$), and $\cT_1 = \max\{k \in [N-1]\mid \norms{\nabla f(x^k)} \geq \frac{L_0}{L_1} \text{ and } F_k > 1\}$ (if there are no such $k$, we let $\cT_1 = -1$). In particular, 
    \begin{itemize}
            \item if $\cT_1 = -1$ and $\cT_2 = N-1$, then 
        \begin{boxF}
            \begin{equation*}
            \hspace{-0.5em}
                F_{N} \lesssim \left( 1 - \max\left\{\frac{\sqrt{\mu}}{2 \sqrt{2} L_1}, \frac{1}{4 L_1R}\right\} \right)^{N} F_{0};
            \end{equation*}
        \end{boxF}
    
        \item if $\cT_1 = \cT_2 = -1$, then
            \begin{boxF}
            \begin{equation*}
                F_{N} \lesssim \left( 1 - \frac{\mu}{2 L_0}\right)^{N} F_{0}.
            \end{equation*} 
            \end{boxF}
    \end{itemize}
\end{theorem}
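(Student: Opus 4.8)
The plan is to reduce everything to a single per-step descent inequality and then bound the per-step contraction factor differently in three regimes, finally chaining the factors. First I would establish the basic descent guarantee for the chosen step size $\eta_k = (L_0 + L_1\norms{\nabla f(x^k)})^{-1}$. Since $\norms{x^{k+1}-x^k} = \eta_k\norms{\nabla f(x^k)} = \frac{\norms{\nabla f(x^k)}}{L_0 + L_1\norms{\nabla f(x^k)}} \leq \frac{1}{L_1}$, Assumption~\ref{ass:L0_L1_smooth} applies and yields the quadratic upper bound $f(x^{k+1}) \leq f(x^k) + \dotprod{\nabla f(x^k)}{x^{k+1}-x^k} + \frac{L_0+L_1\norms{\nabla f(x^k)}}{2}\norms{x^{k+1}-x^k}^2$; substituting the update and $\eta_k$ collapses this to
\begin{equation*}
F_{k+1} \leq F_k - \frac{\norms{\nabla f(x^k)}^2}{2\left(L_0 + L_1\norms{\nabla f(x^k)}\right)}.
\end{equation*}
In particular $F_k$ is nonincreasing, and together with the gradient-norm monotonicity established in Appendix~\ref{app:Monotonicity of Algorithms by Gradient Norms} this is what makes the index sets defining $\cT_1,\cT_2$ consecutive intervals. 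I would also record two lower bounds on the gradient: the Polyak--{\L}ojasiewicz bound $\norms{\nabla f(x^k)}^2 \geq 2\mu F_k$ coming from strong convexity (Assumption~\ref{ass:strongly_convex} with $\mu>0$), and the convexity bound $F_k \leq \dotprod{\nabla f(x^k)}{x^k - x^*} \leq \norms{\nabla f(x^k)} R$, the latter using that the iterates satisfy $\norms{x^k - x^*}\leq R$ (distance monotonicity, which itself follows from the descent inequality above).

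Next I would split according to the size of the gradient. When $\norms{\nabla f(x^k)} < \nicefrac{L_0}{L_1}$ the denominator is at most $2L_0$, so the descent inequality gives $F_{k+1} \leq F_k - \frac{\norms{\nabla f(x^k)}^2}{4L_0}$, and the PL bound turns this into $F_{k+1}\leq (1 - \frac{\mu}{2L_0})F_k = (1-\rho_3)F_k$; this is the small-gradient regime (Phase~3). When $\norms{\nabla f(x^k)} \geq \nicefrac{L_0}{L_1}$ the denominator is at most $2L_1\norms{\nabla f(x^k)}$, so the descent inequality simplifies to the \emph{linear}-in-gradient form $F_{k+1} \leq F_k - \frac{\norms{\nabla f(x^k)}}{4L_1}$. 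Here the choice of lower bound on $\norms{\nabla f(x^k)}$ matters: the convexity bound always yields $F_{k+1}\leq (1 - \frac{1}{4L_1 R})F_k = (1-\rho_1)F_k$, whereas the PL bound yields $F_{k+1} \leq F_k - \frac{\sqrt{2\mu}}{4L_1}\sqrt{F_k}$. The subtle point --- and the reason the threshold $F_k>1$ enters the definition of $\cT_1$ --- is that the PL bound only produces a clean geometric factor once $F_k \leq 1$, because then $\sqrt{F_k}\geq F_k$ and hence $F_{k+1}\leq (1 - \frac{\sqrt{\mu}}{2\sqrt 2 L_1})F_k$. Taking the better of the two gives the Phase~2 factor $1-\rho_2$ with $\rho_2 = \max\{\frac{\sqrt\mu}{2\sqrt2 L_1}, \frac{1}{4L_1 R}\}$, while for $F_k>1$ (Phase~1) only the convexity-based factor $1-\rho_1$ is available.

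Finally I would chain these per-step factors. By the two monotonicities the step indices split into the consecutive blocks $\{0,\dots,\cT_1\}$ (large gradient, $F_k>1$), $\{\cT_1+1,\dots,\cT_2\}$ (large gradient, $F_k\leq 1$), and $\{\cT_2+1,\dots,N-1\}$ (small gradient), on which the contraction factors are $1-\rho_1$, $1-\rho_2$, $1-\rho_3$ respectively; multiplying the corresponding inequalities telescopes to the stated product bound (up to the harmless boundary factor $1-\rho_3\leq 1$ absorbed by the $\lesssim$ in the special cases). The two special cases are then immediate: $\cT_1=-1,\ \cT_2=N-1$ collapses to a pure Phase~2 run giving $(1-\rho_2)^N F_0$, and $\cT_1=\cT_2=-1$ collapses to a pure Phase~3 run giving $(1-\rho_3)^N F_0$. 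The main obstacle I anticipate is not any single estimate but the bookkeeping that makes the three phases genuinely contiguous: one must verify that both $F_k$ and $\norms{\nabla f(x^k)}$ are monotone along the trajectory so that $\cT_1\leq \cT_2$ and the three index blocks are honest intervals, and one must handle the $\sqrt{F_k}$ term in the large-gradient regime carefully, since it is precisely the crossover at $F_k=1$ that forces the intermediate rate $\rho_2$ rather than a single large-gradient rate.
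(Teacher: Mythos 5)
Your proposal is correct and follows essentially the same route as the paper's proof in Appendix~\ref{app:Proof_GD_strongly}: the same descent inequality from the step-size choice, the same split at $\norms{\nabla f(x^k)} \gtrless \nicefrac{L_0}{L_1}$ with the sub-split at $F_k = 1$ producing $\rho_1$ versus $\rho_2$ (via $F_k < 1 \Rightarrow 2\mu F_k > 2\mu F_k^2$, equivalent to your $\sqrt{F_k} \geq F_k$), the same PL and convexity gradient lower bounds, and the same chaining of the three contiguous phases via the monotonicity of $F_k$ and of the gradient norm (Lemma~\ref{lem:GD}). The one loose point is your parenthetical claim that $\norms{x^k - x^*} \leq R$ ``follows from the descent inequality above'' --- it actually requires a separate argument (the paper cites the proof of Theorem~3.3 in \citep{Gorbunov_2024}), but this does not affect the correctness of the overall plan.
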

The above theorem improves the result from \citep{Gorbunov_2024} that show $\|x^N - x^*\|^2 = \cO((1 - \rho_3)^{N-\cT_2}R)$ rate for GD, and, in contrast to the result from \citep{Koloskova_2023}, Theorem~\ref{th:strongly_convex} does not require $L$-smoothness. Moreover, the derived bound contains factor $(1 - \rho_2)^{\cT_2 - \cT_1}(1-\rho_1)^{\cT_1+1}$, which might be better than $(1 - \rho_1)^{\cT_2}$ when $R > \sqrt{\nicefrac{2}{\mu}}$. Moreover, if $\nicefrac{\mu}{2L_0} < \nicefrac{\sqrt{\mu}}{(2\sqrt{2}L_1)}$, then the derived result is strictly better than the known ones for GD under the standard smoothness. The proof of the Theorem~\ref{th:strongly_convex} is provided in Appendix~\ref{app:Proof_GD_strongly}.

\section{Discussion and Future Work}\label{sec:Discussion and Future Work}
In this paper (see Sections~\ref{sec:Full-Gradient Methods} and \ref{sec:Coordinate Descent Type Methods}) we have shown that linear convergence in a convex setup is possible in the case of $(L_0,L_1)$-smooth problems with small enough $L_0$. However, looking at the convergence of Algorithms~\ref{algo:GD}-\ref{algo:OrderRCD}, in particular Theorem~\ref{th:GD_convex}-\ref{th:OrderRCD}, we see that the dominant part is sublinear $\mathcal{O}\left( \nicefrac{1}{N} \right)$ and might be further improved. Nevertheless, as Remarks~\ref{rem:GD}-\ref{rem:OrderRCD} demonstrate, in the case of the strong growth condition ($L_0 = 0$), we can observe significant improvements compared to previous works by \citep[see e.g., ][]{Koloskova_2023, Gorbunov_2024, Vankov_2024}. A prime example of a function that satisfies the strong growth condition is logistic function \citep[see Example 1.6, ][]{Gorbunov_2024}, which is classical in the field of machine learning.

As future work, we see the following directions: generalizing Algorithms~\ref{algo:NGD}-\ref{algo:OrderRCD} to the strongly convex case; investigating whether the proposed technique can be used in the analysis of stochastic methods; investigating the convergence advantages of assuming generalized smoothness (Assumption~\ref{ass:L0_L1_smooth}) in accelerated optimization algorithms and many other directions. We believe this work opens up a number of research directions, including answering the question of whether it is possible to create adaptive methods,~as~well~as methods in different settings (such as federated learning, overparameterization, etc.)~that~will~exhibit~similar~advantages.

\section{Conclusion}\label{sec:Conclusion}
This paper demonstrates that generalized smoothness allows us to achieve linear convergence rates in convex setups. We explained the convergence behavior of gradient descent theoretically and showed that the advantages of generalized smoothness extend to gradient descent method variants, in particular, we significantly improved convergence estimates for GD, NGD, Clip-GD, and demonstrated novel convergence results for algorithms that do not have access to the full gradient as well as to the function values themselves (RCD and OrderRCD). We have demonstrated that this work opens up a number of directions for future research (see Section~\ref{sec:Discussion and Future Work}).

\section*{Acknowledgments}
The authors would like to thank Anastasia Koloskova and  Nazarii Tupitsa for useful discussions.

\bibliography{3_references}
\bibliographystyle{apalike}

\newpage
\appendix

\section{Auxiliary Results}
In this section, we provide auxiliary technical results that are used in our analysis.

\paragraph{Basic inequalities.} For all $a,b \in \mathbb{R}^d$ ($d \geq 1$), the following inequalities hold:
\begin{equation}
    \label{eq:qudrat_raznosti}
    2 \dotprod{a}{b} - \| b \|^2 = \| a \|^2 - \|a - b \|^2,
\end{equation}
\begin{equation}
    \label{eq:scalar_product_bound}
    \dotprod{a}{b} \leq \| a \| \cdot \| b \|.
\end{equation}

\paragraph{Generalized-Lipschitz-smoothness.} In the analysis of full-gradient methods, we assume that the $(L_0,L_1)$-smoothness condition (Assumption \ref{ass:L0_L1_smooth}) is satisfied. This inequality can be represented in the equivalent form for any $x,y \in \mathbb{R}^d$ \citep{Zhang_2020}:
\begin{equation}
    \label{eq:ass_smooth}
    f(y) - f(x) \leq \dotprod{\nabla f(x)}{ y - x} + \frac{L_0 + L_1 \norms{\nabla f(x)}}{2} \norms{y-x}^2,
\end{equation}
where $L_0, L_1 \geq 0$ for any $x \in \mathbb{R}^d$ and $\norms{y-x} \leq \frac{1}{L_1}$.

\paragraph{Generalized-coordinate-Lipschitz-smoothness.} In the analysis of coordinate-wise methods, we assume that the smoothness condition (Assumption \ref{ass:L0_L1_coordinate_smooth}) is satisfied. This inequality can be represented in the equivalent form \citep[Lemma 1]{crawshaw2022robustness}:
\begin{equation}
    \label{eq:ass_smooth_coordinate}
    f(x+ h \ee_i) \leq f(x) + h \nabla_{i} f(x) + \frac{\left( L^{(i)}_0 + L_1^{(i)} |\nabla_i f(x)|\right) h^2}{2},
\end{equation}
where $L_0^{(1)}, L_0^{(2)}, ..., L_0^{(d)}, L_1^{(1)}, L_1^{(2)}, ..., L_1^{(d)} \geq 0$ for any $i \in [d], x \in \mathbb{R}^d$ and $|h| \leq \frac{1}{L_1^{(i)}}$.

\section{Monotonicity of Gradient Norms}\label{app:Monotonicity of Algorithms by Gradient Norms}

In this section, we give a proof of monotonicity of convergence of algorithms by gradient norm. In particular, see Lemma~\ref{lem:GD} for the proof for Algorithm~\ref{algo:GD}, see Lemma~\ref{lem:NGD} for Algorithm~\ref{algo:NGD}, and see Lemma~\ref{lem:ClipGD} for Algorithm~\ref{algo:Clip-GD}.

First of all, we start with the auxiliary result.
\begin{lemma}\label{lem:auxiliary_result}
    Let function $f$ satisfy Assumption~\ref{ass:L0_L1_smooth} ($(L_0,L_1)$-smoothness) and Assumption~\ref{ass:strongly_convex} (convexity, ${\mu=0}$), then for $x,y \in \mathbb{R}^d$ such that $\|y - x\| \leq \frac{1}{L_1}$ we have:
    \begin{equation}
        \frac{\norms{\nabla f(y) - \nabla f(x)}^2}{2 \left(L_0 + L_1 \norms{\nabla f(x)}\right)} \leq f(x) - f(y) - \dotprod{\nabla f(y)}{x - y}. \label{eq:L0_L1_cocoercivity}
    \end{equation}
\end{lemma}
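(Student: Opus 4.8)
The plan is to reduce the statement to the classical cocoercivity argument, adapted to the generalized descent inequality~\eqref{eq:ass_smooth}. First I would introduce the shifted function $\phi(z) := f(z) - \dotprod{\nabla f(y)}{z}$, which is convex because $f$ is (Assumption~\ref{ass:strongly_convex} with $\mu=0$) and satisfies $\nabla\phi(z) = \nabla f(z) - \nabla f(y)$; in particular $\nabla\phi(y) = 0$, so $y$ is a global minimizer of $\phi$. The right-hand side of~\eqref{eq:L0_L1_cocoercivity} is precisely the gap $\phi(x) - \phi(y)$, since $\phi(x) - \phi(y) = f(x) - f(y) - \dotprod{\nabla f(y)}{x - y}$, so it suffices to lower bound this gap.

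Next I would transfer the descent inequality~\eqref{eq:ass_smooth} from $f$ to $\phi$. Because $\phi$ differs from $f$ only by a linear term, applying~\eqref{eq:ass_smooth} at base point $x$ and subtracting the linear part yields, for every $w$ with $\norms{w - x} \leq \nicefrac{1}{L_1}$,
\begin{equation*}
    \phi(w) \leq \phi(x) + \dotprod{\nabla\phi(x)}{w - x} + \frac{M}{2}\norms{w - x}^2, \qquad M := L_0 + L_1 \norms{\nabla f(x)}.
\end{equation*}
Minimizing this quadratic over the ray $w = x - s\,\nabla\phi(x)$ gives the optimal step $s = \nicefrac{1}{M}$, producing the bound $\phi(w^\star) \leq \phi(x) - \frac{1}{2M}\norms{\nabla\phi(x)}^2$ at $w^\star := x - \frac{1}{M}\nabla\phi(x)$.

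The key step, and the one I expect to be the main obstacle, is verifying that the optimal gradient step $w^\star$ actually lands inside the ball $\norms{w^\star - x} \leq \nicefrac{1}{L_1}$ on which~\eqref{eq:ass_smooth} is valid; in the classical $L$-smooth case no such constraint exists, but here it must be checked. To this end I would bound $\norms{\nabla\phi(x)} = \norms{\nabla f(x) - \nabla f(y)}$ via the smoothness inequality~\eqref{eq:L0_L1_smoothness} together with the hypothesis $\norms{y - x} \leq \nicefrac{1}{L_1}$:
\begin{equation*}
    \norms{\nabla f(x) - \nabla f(y)} \leq \left(L_0 + L_1 \norms{\nabla f(x)}\right)\norms{y - x} \leq \frac{M}{L_1},
\end{equation*}
so that $\norms{w^\star - x} = \frac{1}{M}\norms{\nabla\phi(x)} \leq \nicefrac{1}{L_1}$, exactly as required for the descent bound to apply.

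Finally, since $y$ minimizes the convex function $\phi$, we have $\phi(y) \leq \phi(w^\star) \leq \phi(x) - \frac{1}{2M}\norms{\nabla\phi(x)}^2$. Rearranging and substituting back $\phi(x) - \phi(y) = f(x) - f(y) - \dotprod{\nabla f(y)}{x - y}$, $\nabla\phi(x) = \nabla f(x) - \nabla f(y)$, and $M = L_0 + L_1\norms{\nabla f(x)}$ gives exactly~\eqref{eq:L0_L1_cocoercivity}. Everything apart from the step-size feasibility check is the routine one-dimensional minimization and algebraic bookkeeping.
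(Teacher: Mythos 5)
Your proposal is correct and follows essentially the same route as the paper: the paper defines the same shifted function $\varphi_a(b) = f(b) - \dotprod{\nabla f(a)}{b}$ (your $\phi$ with $a=y$), takes the gradient step of size $(L_0 + L_1\norms{\nabla f(x)})^{-1}$, verifies via \eqref{eq:L0_L1_smoothness} and $\norms{y-x}\leq \nicefrac{1}{L_1}$ that this step stays within the $\nicefrac{1}{L_1}$-ball --- the same feasibility check you correctly identified as the key obstacle --- and concludes by minimality of $y$ for the shifted function. The only cosmetic difference is that the paper re-derives the quadratic descent bound for $\varphi_a$ from scratch via the Newton--Leibniz formula, whereas you invoke \eqref{eq:ass_smooth} directly and subtract the linear term, which is equivalent.
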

\begin{proof}
    The proof of this statement is based on the results from \cite{Nesterov_2018, Gorbunov_2024}.

    Let us define the following function $\varphi_{a}(b)$ for a given $a \in \mathbb{R}^d$:
    \begin{equation*}
        \varphi_{a}(b) = f(b) - \dotprod{\nabla f(a)}{b}.
    \end{equation*}
    Then this function is differentiable and $\nabla \varphi_{a}(b) = \nabla f(b) - \nabla f(a)$. Moreover, for any $b, c \in \mathbb{R}^d$ such that $\|b - c\| \leq \frac{1}{L_1}$ we have:
    \begin{equation}\label{eq:varphi_smooth}
        \norms{\nabla \varphi_{a}(c) - \nabla \varphi_{a}(b)} = \norms{\nabla f(b) - \nabla f(c)} \overset{\circledOne}{\leq} \left(L_0 + L_1 \norms{\nabla f(c)}\right) \norms{b - c},
    \end{equation}
    where in $\circledOne$ we applied Assumption~\ref{ass:L0_L1_smooth}. Next, for given $a$ and for any $b,c \in \mathbb{R}^d$ such that $\|b - c\| \leq \frac{1}{L_1}$ we define function $\psi_{abc}(t): \mathbb{R} \rightarrow \mathbb{R}$ as
    \begin{equation*}
        \psi_{abc}(t) = \varphi_{a}(c + t(b - c)).
    \end{equation*}
    Then, by definition of $\psi_{abc}$, we have $\varphi_a(c) = \psi_{abc}(0)$, $\varphi_a(b) = \psi_{abc}(1)$ and $\psi_{abc}' = \dotprod{\nabla \varphi_a(c + t( b - c))}{b - c}$. Therefore, using the Newton-Leibniz formula, we have:
    \begin{align}
        \varphi_a(b) - \varphi_a(c) &= \psi_{abc}(1) - \psi_{abc}(0) = \int_{0}^1 \psi_{abc}' dt  = \int_{0}^1 \dotprod{\nabla \varphi_a(c + t( b - c))}{b - c} dt \notag\\
        &= \dotprod{\nabla \varphi_a(c)}{b - c} + \int_{0}^1 \dotprod{\nabla \varphi_a(c + t( b - c)) - \nabla \varphi_a(c)}{b - c} dt \notag\\
        &\overset{\eqref{eq:scalar_product_bound}}{\leq} \dotprod{\nabla \varphi_a(c)}{b - c} + \int_{0}^1 \norms{\nabla \varphi_a(c + t( b - c)) - \nabla \varphi_a(c)} \norms{b - c} dt \notag\\
        &\overset{\eqref{eq:varphi_smooth}}{\leq} \dotprod{\nabla \varphi_a(c)}{b - c} + \int_{0}^1 \left(L_0 + L_1 \norms{\nabla f(c)}\right) \norms{b - c}^2 \cdot t \cdot dt \notag\\
        &= \dotprod{\nabla \varphi_a(c)}{b - c} + \frac{\left(L_0 + L_1 \norms{\nabla f(c)}\right)}{2} \norms{b - c}^2. \label{eq:djdkfndkfnj}
    \end{align}
    Let $b = c - \frac{1}{L_0 + L_1 \norms{\nabla f(c)}} \nabla \varphi_a (c)$ and assume that $\|a - c\| \leq \frac{1}{L_1}$, then we have
    \begin{align*}
        \|b - c\| &=  \frac{\|\nabla \varphi_a (c)\|}{L_0 + L_1 \norms{\nabla f(c)}} = \frac{\|\nabla f(c) - \nabla f(a)\|}{L_0 + L_1 \norms{\nabla f(c)}} \overset{\eqref{eq:L0_L1_smoothness}}{\leq} \|c-a\| \leq \frac{1}{L_1}, 
    \end{align*}
    meaning that for this choice of $c$ and $b$ we can apply \eqref{eq:djdkfndkfnj} and get:
    \begin{equation*}
        \varphi_a(b) - \varphi_a(c) \leq  - \frac{\norms{\nabla \varphi_a(c)}^2}{L_0 + L_1 \norms{\nabla f(c)}} + \frac{\norms{\nabla \varphi_a(c)}^2}{2 \left(L_0 + L_1 \norms{\nabla f(c)}\right)} = - \frac{\norms{\nabla \varphi_a(c)}^2}{2 \left(L_0 + L_1 \norms{\nabla f(c)}\right)}.
    \end{equation*}

    Using the fact that $a$ is an optimum for $\varphi_a(c)$ (since $\nabla \varphi_a (a) = 0$) and by definition of $\varphi_a(c)$ we obtain the following inequality:
    \begin{equation*}
        f(a) - \dotprod{\nabla f(a)}{a} \leq f(c) - \dotprod{\nabla f(a)}{c} - \frac{\norms{\nabla f(c) - \nabla f (a)}^2}{2 \left(L_0 + L_1 \norms{\nabla f(c)}\right)}.
    \end{equation*}
    Using the fact that this inequality is satisfied for any $a, c\in \mathbb{R}^d$ such that $\|a-c\|\leq \frac{1}{L_1}$, we take $a= y$ and $c = x$ and we get the original statement of the Lemma:
    \begin{equation*}
        \frac{\norms{\nabla f(y) - \nabla f(x)}^2}{2 \left(L_0 + L_1 \norms{\nabla f(x)}\right)} \leq f(x) - f(y) - \dotprod{\nabla f(y)}{x - y}.
    \end{equation*}
\end{proof}
We are now ready to present the proofs of the gradient norm monotonicity along the trajectories of the considered first-order methods.
\begin{lemma}\label{lem:GD}
    Let function $f$ satisfy Assumption~\ref{ass:L0_L1_smooth} ($(L_0,L_1)$-smoothness) and Assumption~\ref{ass:strongly_convex} (convexity, ${\mu=0}$), then for all $k\geq 0$ Algorithm~\ref{algo:GD} with $\eta_k = (L_0 + L_1\|\nabla f(x^k)\|)^{-1}$ satisfies
    \begin{equation*}
        \norms{\nabla f(x^{k+1})} \leq \norms{\nabla f(x^k)}.
    \end{equation*}
\end{lemma}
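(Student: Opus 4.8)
The plan is to prove monotonicity of the gradient norm by exploiting the cocoercivity-type inequality established in Lemma~\ref{lem:auxiliary_result}. The key observation is that with the step size $\eta_k = (L_0 + L_1\|\nabla f(x^k)\|)^{-1}$, the update $x^{k+1} = x^k - \eta_k \nabla f(x^k)$ satisfies the crucial distance bound $\|x^{k+1} - x^k\| = \eta_k \|\nabla f(x^k)\| \leq \frac{1}{L_1}$ (since $\|\nabla f(x^k)\| \leq L_0 + L_1\|\nabla f(x^k)\|$ when $L_0 \geq 0$). This means both $(L_0,L_1)$-smoothness and the auxiliary Lemma can be applied with the pair $x = x^k$, $y = x^{k+1}$.

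First I would apply Lemma~\ref{lem:auxiliary_result} twice, once with $(x,y) = (x^k, x^{k+1})$ and once with the roles swapped $(x,y) = (x^{k+1}, x^k)$, to obtain a two-sided control. Specifically, the first application gives
\begin{equation*}
    \frac{\norms{\nabla f(x^{k+1}) - \nabla f(x^k)}^2}{2(L_0 + L_1\norms{\nabla f(x^k)})} \leq f(x^k) - f(x^{k+1}) - \dotprod{\nabla f(x^{k+1})}{x^k - x^{k+1}},
\end{equation*}
and the swapped application gives an analogous bound with $x^k$ and $x^{k+1}$ interchanged. Adding these two inequalities causes the function-value differences to telescope and produces a clean bound of the form
\begin{equation*}
    \norms{\nabla f(x^{k+1}) - \nabla f(x^k)}^2 \left(\frac{1}{L_0 + L_1\norms{\nabla f(x^k)}} \wedge \text{similar}\right) \leq \dotprod{\nabla f(x^{k+1}) - \nabla f(x^k)}{x^k - x^{k+1}}.
\end{equation*}
Since $x^k - x^{k+1} = \eta_k \nabla f(x^k)$, the right-hand side becomes $\eta_k \dotprod{\nabla f(x^{k+1}) - \nabla f(x^k)}{\nabla f(x^k)}$, which I would expand and rearrange.

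The main technical step is to convert this cocoercivity bound into the desired inequality $\norms{\nabla f(x^{k+1})} \leq \norms{\nabla f(x^k)}$. I expect the cleanest route is to write $\nabla f(x^{k+1}) = \nabla f(x^k) + (\nabla f(x^{k+1}) - \nabla f(x^k))$ and bound $\norms{\nabla f(x^{k+1})}^2$ by expanding the square: $\norms{\nabla f(x^{k+1})}^2 = \norms{\nabla f(x^k)}^2 + 2\dotprod{\nabla f(x^k)}{\nabla f(x^{k+1}) - \nabla f(x^k)} + \norms{\nabla f(x^{k+1}) - \nabla f(x^k)}^2$. The cocoercivity inequality precisely bounds the inner-product cross term against the squared-difference term with the right constant, so the cross term absorbs the quadratic term and forces the whole expression below $\norms{\nabla f(x^k)}^2$.

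The hard part will be tracking the asymmetry between the two constants $L_0 + L_1\norms{\nabla f(x^k)}$ and $L_0 + L_1\norms{\nabla f(x^{k+1})}$ arising from the two applications of the Lemma, since the monotonicity we are trying to prove is exactly what would let us simplify them; I would handle this by using only the bound with the constant evaluated at $x^k$ (which is the one matching $\eta_k$) and verifying that the cross-term cancellation goes through without needing information about $\norms{\nabla f(x^{k+1})}$ in the denominator. A careful bookkeeping of which cocoercivity bound to invoke — likely just the single application with $(x,y)=(x^{k+1},x^k)$ so that the denominator involves $\norms{\nabla f(x^{k+1})}$ in a harmless way, or conversely the one whose constant equals $\eta_k^{-1}$ — is where the argument must be set up precisely to avoid a circular dependence.
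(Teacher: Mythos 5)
Your proposal is, in substance, the paper's own proof of Lemma~\ref{lem:GD}: the paper likewise verifies $\norms{x^{k+1} - x^k} = \eta_k \norms{\nabla f(x^k)} \leq \nicefrac{1}{L_1}$, sums \eqref{eq:L0_L1_cocoercivity} with $(x,y) = (x^k, x^{k+1})$ and $(x,y) = (x^{k+1}, x^k)$ so that the function values cancel, and then absorbs the cross term; your expansion of $\norms{\nabla f(x^{k+1})}^2$ is just a more direct packaging of the paper's multiply-by-$2\omega_k$ rearrangement, where $\omega_k \coloneqq L_0 + L_1\norms{\nabla f(x^k)}$ (the paper finishes by discarding the leftover term via gradient monotonicity from convexity, which your version does not even need). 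Two corrections to your bookkeeping, writing $\Delta \coloneqq \nabla f(x^{k+1}) - \nabla f(x^k)$: the summed inequality points the other way than you displayed, namely $\bigl(\tfrac{1}{2\omega_k} + \tfrac{1}{2\omega_{k+1}}\bigr)\norms{\Delta}^2 \leq \dotprod{\Delta}{x^{k+1} - x^k} = -\eta_k \dotprod{\Delta}{\nabla f(x^k)}$, so the cross term is bounded \emph{above} by a negative multiple of $\norms{\Delta}^2$, exactly as your absorption step requires; and your worry about circularity through $\omega_{k+1}$ is unfounded, since both left-hand terms are non-negative and you may simply drop $\tfrac{1}{2\omega_{k+1}}\norms{\Delta}^2$, after which $\tfrac{1}{2\omega_k}\norms{\Delta}^2 \leq -\eta_k\dotprod{\Delta}{\nabla f(x^k)}$ with $\eta_k = \nicefrac{1}{\omega_k}$ gives $2\dotprod{\nabla f(x^k)}{\Delta} \leq -\norms{\Delta}^2$ and hence $\norms{\nabla f(x^{k+1})}^2 = \norms{\nabla f(x^k)}^2 + 2\dotprod{\nabla f(x^k)}{\Delta} + \norms{\Delta}^2 \leq \norms{\nabla f(x^k)}^2$. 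One caution: the ``single application'' fallback you float in your last paragraph would not suffice on its own, because without summing the two applications of Lemma~\ref{lem:auxiliary_result} the function-value differences do not cancel, and the resulting one-sided bound only controls $\norms{\Delta}$ by $\norms{\nabla f(x^k)}$ up to a constant, which is weaker than monotonicity.
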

\begin{proof}
    We note that for GD with $\eta_k = (L_0 + L_1\|\nabla f(x^k)\|)^{-1}$ iterates $x^k$ and $x^{k+1}$ satisfy  
    \begin{equation*}
        \|x^k - x^{k+1}\| = \frac{\|\nabla f(x^k)\|}{L_0 + L_1\|\nabla f(x^k)\|} \leq \frac{1}{L_1},
    \end{equation*}
    meaning that one can apply Lemma~\ref{lem:auxiliary_result} for these points. Introducing for convenience the new notation $\omega_k = L_0 + L_1 \norms{\nabla f(x)}$ and summing \eqref{eq:L0_L1_cocoercivity} with $x = x^k, y = x^{k+1}$ and $x = x^{k+1}, y = x^k$, we get the following inequality:
    \begin{align*}
        \left( \frac{1}{2 \omega_k} + \frac{1}{2 \omega_{k+1}} \right) \norms{\nabla f(x^{k+1}) - \nabla f(x^k)}^2 &\leq \dotprod{\nabla f(x^{k+1}) - \nabla f(x^k)}{x^{k+1} - x^k} \\
        &= - \eta_k \dotprod{\nabla f(x^{k+1}) - \nabla f(x^k)}{\nabla f(x^k)}.
    \end{align*}
    Multiplying both sides by $2 \omega_k$, we obtain 
    \begin{align*}
        \left( 1 + \frac{\omega_k}{\omega_{k+1}} \right) &\left(\norms{\nabla f(x^{k+1})}^2 -2\dotprod{\nabla f(x^{k+1})}{\nabla f(x^k)} +  \norms{\nabla f(x^k)}^2 \right) \\
        &\leq -2 \omega_k \eta_k \dotprod{\nabla f(x^{k+1}) - \nabla f(x^k)}{\nabla f(x^k)},
    \end{align*}
    which is equivalent to
    \begin{align*}
        \left( 1 + \frac{\omega_k}{\omega_{k+1}} \right) \norms{\nabla f(x^{k+1})}^2 &\leq \left( 1 + \frac{\omega_k}{\omega_{k+1}} \right) \norms{\nabla f(x^k)}^2\\
        &\quad + 2\left( 1 + \frac{ \omega_k}{\omega_{k+1}} - \omega_k \eta_k  \right) \dotprod{\nabla f(x^{k+1}) - \nabla f(x^k)}{\nabla f(x^k)} \\
        &= \left( 1 + \frac{\omega_k}{\omega_{k+1}} \right) \norms{\nabla f(x^k)}^2\\
        &\quad - \frac{2}{\eta_k}\left( 1 + \frac{ \omega_k}{\omega_{k+1}} - \omega_k \eta_k  \right) \dotprod{\nabla f(x^{k+1}) - \nabla f(x^k)}{x^{k+1} - x^k} \\
        &\overset{\circledOne}{=} \left( 1 + \frac{\omega_k}{\omega_{k+1}} \right) \norms{\nabla f(x^k)}^2 - \frac{2\omega_k}{\omega_{k+1} \eta_k}  \dotprod{\nabla f(x^{k+1}) - \nabla f(x^k)}{x^{k+1} - x^k} \\
        &\overset{\circledTwo}{\leq} \left( 1 + \frac{\omega_k}{\omega_{k+1}} \right) \norms{\nabla f(x^k)}^2,
    \end{align*}
    where in $\circledOne$ we used $\eta_k = \frac{1}{\omega_k}$; and in $\circledTwo$ we used $\eta_k,\omega_k, \omega_{k+1} \geq 0$ and convexity of function $f$. Hence, we obtain the original statement of the Lemma:
    \begin{equation*}
        \norms{\nabla f(x^{k+1})} \leq \norms{\nabla f(x^k)}.
    \end{equation*}
\end{proof}

Next, we provide a similar result for Algorithm~\ref{algo:NGD}.
\begin{lemma}\label{lem:NGD}
    Let function $f$ satisfy Assumption~\ref{ass:L0_L1_smooth} ($(L_0,L_1)$-smoothness) and Assumption~\ref{ass:strongly_convex} (convexity, ${\mu=0}$), then for all $k\geq 0$ Algorithm~\ref{algo:NGD} with $\eta_k = \eta \leq \frac{c}{L_0 + cL_1}$, where $\|\nabla f(x^k)\| \geq c$, satisfies
    \begin{equation*}
        \norms{\nabla f(x^{k+1})} \leq \norms{\nabla f(x^k)}.
    \end{equation*}
\end{lemma}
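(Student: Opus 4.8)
The plan is to mirror the proof of Lemma~\ref{lem:GD}, replacing the GD step by the normalized step and tracking the places where the exact cancellation $\omega_k\eta_k = 1$ that is available for GD is lost. Throughout, write $\omega_k = L_0 + L_1\norms{\nabla f(x^k)}$ and abbreviate $g_k = \nabla f(x^k)$. First I would check that Lemma~\ref{lem:auxiliary_result} applies to the pair $(x^k, x^{k+1})$: the NGD update gives $\norms{x^{k+1} - x^k} = \eta$, and the stepsize bound $\eta \le \nicefrac{c}{(L_0 + cL_1)} \le \nicefrac{c}{(cL_1)} = \nicefrac{1}{L_1}$ shows $\norms{x^{k+1}-x^k}\le \nicefrac{1}{L_1}$, as required by \eqref{eq:L0_L1_cocoercivity}.

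Next I would apply \eqref{eq:L0_L1_cocoercivity} twice, with $(x,y) = (x^k, x^{k+1})$ and with $(x,y) = (x^{k+1}, x^k)$, and add the two inequalities. Exactly as in Lemma~\ref{lem:GD}, the function-value terms cancel, leaving
\[
\left(\frac{1}{2\omega_k} + \frac{1}{2\omega_{k+1}}\right)\norms{g_{k+1} - g_k}^2 \le \dotprod{g_{k+1} - g_k}{x^{k+1} - x^k}.
\]
Substituting the normalized step $x^{k+1} - x^k = -\eta\, g_k/\norms{g_k}$, multiplying by $2\omega_k$, expanding $\norms{g_{k+1}-g_k}^2 = \norms{g_{k+1}}^2 - \norms{g_k}^2 - 2\dotprod{g_{k+1}-g_k}{g_k}$, and isolating $\norms{g_{k+1}}^2$, I would arrive at
\[
\left(1 + \tfrac{\omega_k}{\omega_{k+1}}\right)\norms{g_{k+1}}^2 \le \left(1 + \tfrac{\omega_k}{\omega_{k+1}}\right)\norms{g_k}^2 + 2\left(1 + \tfrac{\omega_k}{\omega_{k+1}} - \tfrac{\omega_k\eta}{\norms{g_k}}\right)\dotprod{g_{k+1}-g_k}{g_k},
\]
which is the NGD analogue of the corresponding display in Lemma~\ref{lem:GD}.

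The crux is to show the last term is nonpositive. Its inner-product factor is handled by convexity: monotonicity of $\nabla f$ gives $\dotprod{g_{k+1}-g_k}{x^{k+1}-x^k}\ge 0$, and since $x^{k+1}-x^k = -\tfrac{\eta}{\norms{g_k}}g_k$ with $\tfrac{\eta}{\norms{g_k}}>0$ this forces $\dotprod{g_{k+1}-g_k}{g_k}\le 0$. It therefore suffices that the coefficient $1 + \tfrac{\omega_k}{\omega_{k+1}} - \tfrac{\omega_k\eta}{\norms{g_k}}$ be nonnegative; since $\tfrac{\omega_k}{\omega_{k+1}}>0$, it is enough to verify $\omega_k\eta \le \norms{g_k}$. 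Here the difference from GD appears: we no longer have $\omega_k\eta = 1$, so instead I would plug in $\eta \le \nicefrac{c}{(L_0+cL_1)}$ together with $\omega_k = L_0 + L_1\norms{g_k}$ and reduce $\omega_k\eta \le \norms{g_k}$ (after clearing the positive denominators) to $c\,L_0 \le L_0\norms{g_k}$, i.e. to $c \le \norms{g_k}$ when $L_0>0$ — precisely the running hypothesis $\norms{\nabla f(x^k)}\ge c$. Hence the correction term is nonpositive, and dividing through by the positive factor $1 + \omega_k/\omega_{k+1}$ yields $\norms{g_{k+1}}^2 \le \norms{g_k}^2$, which is the claim.

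I expect the main obstacle to be exactly this last sign check. In the GD proof the bracketed coefficient collapses to $\omega_k/\omega_{k+1}$ via $\omega_k\eta_k = 1$, whereas for NGD one must simultaneously exploit the stepsize ceiling $\eta \le \nicefrac{c}{(L_0+cL_1)}$ and the gradient lower bound $\norms{g_k}\ge c$ to keep the coefficient nonnegative. The degenerate case $L_0 = 0$ deserves a brief separate remark, since there the reduction $c\,L_0 \le L_0\norms{g_k}$ becomes the trivial $0\le 0$ and the required bound $\omega_k\eta\le\norms{g_k}$ follows directly from $\eta \le \nicefrac{1}{L_1}$.
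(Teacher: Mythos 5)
Your proposal is correct and follows essentially the same route as the paper's proof: the same application of Lemma~\ref{lem:auxiliary_result} summed over $(x^k,x^{k+1})$ and $(x^{k+1},x^k)$, the same expansion after multiplying by $2\omega_k$, and a sign check on the coefficient $1 + \nicefrac{\omega_k}{\omega_{k+1}} - \nicefrac{\omega_k\eta}{\norms{g_k}}$ that is exactly the paper's chain $\circledOne$--$\circledThree$, since your condition $\omega_k\eta \le \norms{g_k}$ is equivalent to the paper's combination of $\eta \le \nicefrac{c}{(L_0+L_1c)}$ with $\nicefrac{c}{(L_0+L_1c)} \le \nicefrac{\norms{g_k}}{\omega_k}$. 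Your version, keeping $\dotprod{g_{k+1}-g_k}{g_k} \le 0$ directly rather than converting back to $x^{k+1}-x^k$, is a slightly cleaner packaging of the identical argument, and your separate remark on the degenerate case $L_0 = 0$ is a harmless addition.
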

\begin{proof}
    We note that for NGD with $\eta_k = \eta \leq \frac{c}{L_0 + cL_1}$ iterates $x^k$ and $x^{k+1}$ satisfy  
    \begin{equation*}
        \|x^k - x^{k+1}\| = \eta \leq \frac{1}{L_1},
    \end{equation*}
    meaning that one can apply Lemma~\ref{lem:auxiliary_result} for these points. Introducing for convenience the new notation $\omega_k = L_0 + L_1 \norms{\nabla f(x)}$ and summing \eqref{eq:L0_L1_cocoercivity} with $x = x^k, y = x^{k+1}$ and $x = x^{k+1}, y = x^k$, we get the following inequality:
    \begin{align*}
        \left( \frac{1}{2 \omega_k} + \frac{1}{2 \omega_{k+1}} \right) \norms{\nabla f(x^{k+1}) - \nabla f(x^k)}^2 &\leq \dotprod{\nabla f(x^{k+1}) - \nabla f(x^k)}{x^{k+1} - x^k} \\
        &= - \frac{\eta_k}{\norms{\nabla f(x^k)}} \dotprod{\nabla f(x^{k+1}) - \nabla f(x^k)}{\nabla f(x^k)}.
    \end{align*}
    Multiplying both sides by $2 \omega_k$, we obtain 
    \begin{align*}
        \left( 1 + \frac{\omega_k}{\omega_{k+1}} \right)& \left(\norms{\nabla f(x^{k+1})}^2 -2\dotprod{\nabla f(x^{k+1})}{\nabla f(x^k)} +  \norms{\nabla f(x^k)}^2 \right)\\
        &\leq - \frac{2 \omega_k \eta_k}{\norms{\nabla f(x^k)}} \dotprod{\nabla f(x^{k+1}) - \nabla f(x^k)}{\nabla f(x^k)},
    \end{align*}
    which is equivalent to
    \begin{align*}
        \left( 1 + \frac{\omega_k}{\omega_{k+1}} \right) &\norms{\nabla f(x^{k+1})}^2 \leq \left( 1 + \frac{\omega_k}{\omega_{k+1}} \right) \norms{\nabla f(x^k)}^2 \\
        &\quad + 2\left( 1 + \frac{ \omega_k}{\omega_{k+1}} - \frac{ \omega_k \eta_k}{\norms{\nabla f(x^k)}}  \right) \dotprod{\nabla f(x^{k+1}) - \nabla f(x^k)}{\nabla f(x^k)} \\
        &= \left( 1 + \frac{\omega_k}{\omega_{k+1}} \right) \norms{\nabla f(x^k)}^2 \\
        &\quad - \frac{2 \| \nabla f(x^k) \|}{\eta_k}\left( 1 + \frac{ \omega_k}{\omega_{k+1}} - \frac{ \omega_k \eta_k}{\norms{\nabla f(x^k)}}  \right) \dotprod{\nabla f(x^{k+1}) - \nabla f(x^k)}{x^{k+1} - x^k} \\
        &\overset{\circledOne}{\leq} \left( 1 + \frac{\omega_k}{\omega_{k+1}} \right) \norms{\nabla f(x^k)}^2 \\
        &\quad - \frac{2 \| \nabla f(x^k) \|}{\eta_k}\left( 1 + \frac{ \omega_k}{\omega_{k+1}} - \frac{ \omega_k c}{\norms{\nabla f(x^k)} (L_0 + L_1c)}  \right) \dotprod{\nabla f(x^{k+1}) - \nabla f(x^k)}{x^{k+1} - x^k} \\
        &\overset{\circledTwo}{\leq} \left( 1 + \frac{\omega_k}{\omega_{k+1}} \right) \norms{\nabla f(x^k)}^2 - \frac{2\omega_k \norms{\nabla f(x^k)}}{\omega_{k+1} \eta_k}  \dotprod{\nabla f(x^{k+1}) - \nabla f(x^k)}{x^{k+1} - x^k} \\
        &\overset{\circledThree}{\leq} \left( 1 + \frac{\omega_k}{\omega_{k+1}} \right) \norms{\nabla f(x^k)}^2,
    \end{align*}
    where in $\circledOne$ we used $\eta_k \leq \frac{c}{L_0 + L_1 c}$, in $\circledTwo$ we used $\|\nabla f(x^k)\| \geq c$ implying $\frac{c}{L_0 + L_1 c} \leq \frac{\norms{\nabla f(x^k)}}{\omega_k}$, and in $\circledThree$ we used $\norms{\nabla f(x^k)}, \eta_k,\omega_k, \omega_{k+1} \geq 0$ and convexity of function $f$. Hence, we obtain the original statement of the Lemma:
    \begin{equation*}
        \norms{\nabla f(x^{k+1})} \leq \norms{\nabla f(x^k)}.
    \end{equation*}
\end{proof}

Finally, we present a similar result for Algorithm~\ref{algo:Clip-GD} that can be viewed as a combination of the previous two.
\begin{lemma}\label{lem:ClipGD}
    Let function $f$ satisfy Assumption~\ref{ass:L0_L1_smooth} ($(L_0,L_1)$-smoothness) and Assumption~\ref{ass:strongly_convex} (convexity, ${\mu=0}$), then for all $k\geq 0$ Algorithm~\ref{algo:Clip-GD} with step size $\eta_k = (L_0 + L_1 \max\{ \| \nabla f(x^k) \|,c\})^{-1}$ satisfies
    \begin{equation*}
        \norms{\nabla f(x^{k+1})} \leq \norms{\nabla f(x^k)}.
    \end{equation*}
\end{lemma}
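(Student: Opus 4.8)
The plan is to follow the same template as the proofs of Lemma~\ref{lem:GD} and Lemma~\ref{lem:NGD}, viewing Clip-GD as a single update $x^{k+1} = x^k - \eta_k \lambda_k \nabla f(x^k)$ with $\lambda_k = \min\{1, \nicefrac{c}{\norms{\nabla f(x^k)}}\}$, so that $\text{clip}_c(\nabla f(x^k)) = \lambda_k \nabla f(x^k)$ and $\norms{\text{clip}_c(\nabla f(x^k))} = \min\{\norms{\nabla f(x^k)}, c\}$. First I would check that Lemma~\ref{lem:auxiliary_result} applies to the pair $(x^k, x^{k+1})$: since $\norms{x^{k+1} - x^k} = \eta_k \min\{\norms{\nabla f(x^k)}, c\} = \frac{\min\{\norms{\nabla f(x^k)}, c\}}{L_0 + L_1\max\{\norms{\nabla f(x^k)}, c\}} \leq \frac{1}{L_1}$, where the last step uses $\min\{\norms{\nabla f(x^k)}, c\} \leq \max\{\norms{\nabla f(x^k)}, c\}$ together with $L_0 \geq 0$.

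With $\omega_k = L_0 + L_1\norms{\nabla f(x^k)}$, summing the cocoercivity inequality~\eqref{eq:L0_L1_cocoercivity} over the pairs $(x = x^k, y = x^{k+1})$ and $(x = x^{k+1}, y = x^k)$ yields, exactly as in Lemma~\ref{lem:GD}, the bound $\left(\frac{1}{2\omega_k} + \frac{1}{2\omega_{k+1}}\right)\norms{\nabla f(x^{k+1}) - \nabla f(x^k)}^2 \leq \dotprod{\nabla f(x^{k+1}) - \nabla f(x^k)}{x^{k+1} - x^k} = -\eta_k\lambda_k\dotprod{\nabla f(x^{k+1}) - \nabla f(x^k)}{\nabla f(x^k)}$. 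Multiplying by $2\omega_k$, expanding the squared difference, and rearranging (identically to the GD computation, with $\eta_k$ replaced throughout by $\eta_k\lambda_k$) gives $\left(1 + \frac{\omega_k}{\omega_{k+1}}\right)\norms{\nabla f(x^{k+1})}^2 \leq \left(1 + \frac{\omega_k}{\omega_{k+1}}\right)\norms{\nabla f(x^k)}^2 + 2\left(1 + \frac{\omega_k}{\omega_{k+1}} - \omega_k\eta_k\lambda_k\right)\dotprod{\nabla f(x^{k+1}) - \nabla f(x^k)}{\nabla f(x^k)}$.

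The crux is to show the bracket $1 + \frac{\omega_k}{\omega_{k+1}} - \omega_k\eta_k\lambda_k$ is nonnegative, for which it suffices to prove $\omega_k\eta_k\lambda_k \leq 1$; this is the one place where the clipping forces a case split, and it is the main obstacle beyond reusing the GD algebra. If $\norms{\nabla f(x^k)} \leq c$, then $\lambda_k = 1$ and $\max\{\norms{\nabla f(x^k)}, c\} = c$, so $\omega_k\eta_k\lambda_k = \frac{L_0 + L_1\norms{\nabla f(x^k)}}{L_0 + L_1 c} \leq 1$; if $\norms{\nabla f(x^k)} > c$, then $\lambda_k = \nicefrac{c}{\norms{\nabla f(x^k)}}$ and $\max\{\norms{\nabla f(x^k)}, c\} = \norms{\nabla f(x^k)}$, so $\omega_k\eta_k = 1$ and $\omega_k\eta_k\lambda_k = \lambda_k < 1$. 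In either case $1 + \frac{\omega_k}{\omega_{k+1}} - \omega_k\eta_k\lambda_k \geq \frac{\omega_k}{\omega_{k+1}} > 0$. Finally, convexity gives $\dotprod{\nabla f(x^{k+1}) - \nabla f(x^k)}{x^{k+1} - x^k} \geq 0$, and since $x^{k+1} - x^k = -\eta_k\lambda_k\nabla f(x^k)$ with $\eta_k\lambda_k > 0$, this forces $\dotprod{\nabla f(x^{k+1}) - \nabla f(x^k)}{\nabla f(x^k)} \leq 0$. Multiplying the nonnegative bracket by this nonpositive inner product makes the last term $\leq 0$, whence $\left(1 + \frac{\omega_k}{\omega_{k+1}}\right)\norms{\nabla f(x^{k+1})}^2 \leq \left(1 + \frac{\omega_k}{\omega_{k+1}}\right)\norms{\nabla f(x^k)}^2$, and dividing by the positive factor yields the claimed $\norms{\nabla f(x^{k+1})} \leq \norms{\nabla f(x^k)}$.
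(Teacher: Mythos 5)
Your proof is correct and follows essentially the same route as the paper's: the same application of Lemma~\ref{lem:auxiliary_result} to the pair $(x^k, x^{k+1})$, the same symmetric summation of \eqref{eq:L0_L1_cocoercivity}, multiplication by $2\omega_k$, and the same case split on $\lambda_k$ to control the bracket. Your version is in fact slightly cleaner in two spots: you prove $\omega_k\eta_k\lambda_k \leq 1$ uniformly and argue the sign of $\dotprod{\nabla f(x^{k+1}) - \nabla f(x^k)}{\nabla f(x^k)}$ directly rather than converting back to $\dotprod{\nabla f(x^{k+1}) - \nabla f(x^k)}{x^{k+1}-x^k}$ inside each case, and your step-length computation correctly has $\min\{\norms{\nabla f(x^k)},c\}$ in the numerator where the paper's proof has a harmless typo ($\max$).
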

\begin{proof}
    We note that for Clip-GD with $\eta_k = (L_0 + L_1 \max\{ \| \nabla f(x^k) \|,c\})^{-1}$ iterates $x^k$ and $x^{k+1}$ satisfy  
    \begin{equation*}
        \|x^k - x^{k+1}\| = \frac{\max\{ \| \nabla f(x^k) \|,c\}}{L_0 + L_1 \max\{ \| \nabla f(x^k) \|,c\}} \leq \frac{1}{L_1},
    \end{equation*}
    meaning that one can apply Lemma~\ref{lem:auxiliary_result} for these points. Introducing for convenience the new notation $\omega_k = L_0 + L_1 \norms{\nabla f(x)}$ and summing \eqref{eq:L0_L1_cocoercivity} with $x = x^k, y = x^{k+1}$ and $x = x^{k+1}, y = x^k$, we get the following inequality:
    \begin{align*}
        \left( \frac{1}{2 \omega_k} + \frac{1}{2 \omega_{k+1}} \right) \norms{\nabla f(x^{k+1}) - \nabla f(x^k)}^2 &\leq \dotprod{\nabla f(x^{k+1}) - \nabla f(x^k)}{x^{k+1} - x^k} \\
        &= - \eta_k \cdot \underbrace{\min \left\{ 1, \frac{c}{\norms{\nabla f(x^k)}}\right\}}_{\lambda_k} \dotprod{\nabla f(x^{k+1}) - \nabla f(x^k)}{\nabla f(x^k)}.
    \end{align*}
    Multiplying both sides by $2 \omega_k$, we obtain 
    \begin{align} 
        \left( 1 + \frac{\omega_k}{\omega_{k+1}} \right) &\left(\norms{\nabla f(x^{k+1})}^2 -2\dotprod{\nabla f(x^{k+1})}{\nabla f(x^k)} +  \norms{\nabla f(x^k)}^2 \right) \notag\\
        &\leq - 2 \omega_k \eta_k \lambda_k \dotprod{\nabla f(x^{k+1}) - \nabla f(x^k)}{\nabla f(x^k)}.\label{eq:aux_clip}
    \end{align}
    Consider two cases: $\lambda_k = 1$ or $\lambda_k = \frac{c}{\|\nabla f(x^k)\|}$. If $\lambda_k = 1$, then $c \geq \norms{\nabla f(x^k)}$. Then \eqref{eq:aux_clip} is equivalent to the following:
    \begin{align*}
        \left( 1 + \frac{\omega_k}{\omega_{k+1}} \right) &\norms{\nabla f(x^{k+1})}^2 \leq \left( 1 + \frac{\omega_k}{\omega_{k+1}} \right) \norms{\nabla f(x^k)}^2 \\
        &\quad + 2\left( 1 + \frac{ \omega_k}{\omega_{k+1}} - \omega_k \eta_k  \right) \dotprod{\nabla f(x^{k+1}) - \nabla f(x^k)}{\nabla f(x^k)} \\
        &= \left( 1 + \frac{\omega_k}{\omega_{k+1}} \right) \norms{\nabla f(x^k)}^2 - \frac{2}{\eta_k}\left( 1 + \frac{ \omega_k}{\omega_{k+1}} - \omega_k \eta_k  \right) \dotprod{\nabla f(x^{k+1}) - \nabla f(x^k)}{x^{k+1} - x^k} \\
        &\overset{\circledOne}{\leq} \left( 1 + \frac{\omega_k}{\omega_{k+1}} \right) \norms{\nabla f(x^k)}^2\\
        &\quad - \frac{2}{\eta_k}\left( 1 + \frac{ \omega_k}{\omega_{k+1}} - \frac{\omega_k}{L_0 + L_1 c}  \right) \dotprod{\nabla f(x^{k+1}) - \nabla f(x^k)}{x^{k+1} - x^k} \\
        &\overset{\circledTwo}{\leq} \left( 1 + \frac{\omega_k}{\omega_{k+1}} \right) \norms{\nabla f(x^k)}^2 - \frac{2\omega_k}{\omega_{k+1} \eta_k}  \dotprod{\nabla f(x^{k+1}) - \nabla f(x^k)}{x^{k+1} - x^k} \\
        &\overset{\circledThree}{\leq} \left( 1 + \frac{\omega_k}{\omega_{k+1}} \right) \norms{\nabla f(x^k)}^2,
    \end{align*}
    where in $\circledOne$ we used $\eta_k \leq \frac{1}{L_0 + L_1 c}$, in $\circledTwo$ we used $\frac{1}{L_0 + L_1 c} \leq \frac{1}{\omega_k}$, and in $\circledThree$ we used $\eta_k,\omega_k, \omega_{k+1} \geq 0$ and convexity of function $f$. 
    
    Next, we consider the case when $\lambda_k = \frac{c}{\norms{\nabla f(x^k)}}$, implying $c \leq \norms{\nabla f(x^k)}$. Then, \eqref{eq:aux_clip} is equivalent to the following:
    \begin{align*}
        \left( 1 + \frac{\omega_k}{\omega_{k+1}} \right) &\norms{\nabla f(x^{k+1})}^2 \leq \left( 1 + \frac{\omega_k}{\omega_{k+1}} \right) \norms{\nabla f(x^k)}^2 \\
        &\quad + 2\left( 1 + \frac{ \omega_k}{\omega_{k+1}} - \frac{ \omega_k \eta_k c}{\norms{\nabla f(x^k)}}  \right) \dotprod{\nabla f(x^{k+1}) - \nabla f(x^k)}{\nabla f(x^k)} \\
        &= \left( 1 + \frac{\omega_k}{\omega_{k+1}} \right) \norms{\nabla f(x^k)}^2 \\
        &\quad - \frac{2 \| \nabla f(x^k) \|}{\eta_k c}\left( 1 + \frac{ \omega_k}{\omega_{k+1}} - \frac{ \omega_k \eta_k c}{\norms{\nabla f(x^k)}}  \right) \dotprod{\nabla f(x^{k+1}) - \nabla f(x^k)}{x^{k+1} - x^k} \\
        &\overset{\circledOne}{\leq} \left( 1 + \frac{\omega_k}{\omega_{k+1}} \right) \norms{\nabla f(x^k)}^2 \\
        &\quad - \frac{2 \| \nabla f(x^k) \|}{\eta_k c}\left( 1 + \frac{ \omega_k}{\omega_{k+1}} - \frac{ \omega_k}{(L_0 + L_1c)}  \right) \dotprod{\nabla f(x^{k+1}) - \nabla f(x^k)}{x^{k+1} - x^k} \\
        &\overset{\circledTwo}{\leq} \left( 1 + \frac{\omega_k}{\omega_{k+1}} \right) \norms{\nabla f(x^k)}^2 - \frac{2\omega_k \norms{\nabla f(x^k)}}{\omega_{k+1} \eta_k c}  \dotprod{\nabla f(x^{k+1}) - \nabla f(x^k)}{x^{k+1} - x^k} \\
        &\overset{\circledThree}{\leq} \left( 1 + \frac{\omega_k}{\omega_{k+1}} \right) \norms{\nabla f(x^k)}^2,
    \end{align*}
    where in $\circledOne$ we used $\eta_k \leq \frac{1}{L_0 + L_1 c}$ and $c \leq \norms{\nabla f(x^k)}$, in $\circledTwo$ we used $\frac{c}{L_0 + L_1 c} \leq \frac{1}{\omega_k}$, and in $\circledThree$ we used $\norms{\nabla f(x^k)}, \eta_k,\omega_k, \omega_{k+1} \geq 0$ and convexity of function $f$.
    
    That is, in both cases, we obtain the original statement of the Lemma:
    \begin{equation*}
        \norms{\nabla f(x^{k+1})} \leq \norms{\nabla f(x^k)}.
    \end{equation*}
\end{proof}

\section{Missing Proofs for Full-Gradient Algorithms}
In this section, we give missing proofs from the main part of the paper. In particular, see Subsection~\ref{subsec:Proof_GD} for the proof of convergence results for Algorithm~\ref{algo:GD}, see Subsection~\ref{subsec:Proof_NGD} for Algorithm~\ref{algo:NGD}, and see Subsection~\ref{subsec:Proof_ClipGD} for Algorithm~\ref{algo:Clip-GD}.

\subsection{Proof of Theorem~\ref{th:GD_convex}}\label{subsec:Proof_GD}

Using Assumption~\ref{ass:L0_L1_smooth}, we derive
\begin{align}
    f(x^{k+1}) - f(x^k) &= f(x^{k} - \eta_k \nabla f(x^k)) - f(x^k) \nonumber \\
    &\overset{\eqref{eq:ass_smooth}}{\leq} - \eta_k \dotprod{\nabla f(x^k)}{\nabla f(x^k)} + \eta_k^2 \frac{L_0 + L_1 \norms{\nabla f(x^k)}}{2} \norms{\nabla f(x^k)}^2 \nonumber \\
    & \overset{\circledOne}{\leq} - \eta_k \norms{\nabla f(x^k)}^2 + \frac{\eta_k}{2} \norms{\nabla f(x^k)}^2 \nonumber \\
    &= - \frac{\eta_k}{2} \norms{\nabla f(x^k)}^2, \label{eq:GD_convex_1}
\end{align}
where in $\circledOne$ we used $\eta_k \leq \frac{1}{L_0 + L_1 \norms{\nabla f(x^k)}}$. Next, let us consider two cases.

\fbox{The case of $\norms{\nabla f(x^k)} \geq \frac{L_0}{L_1}$.} Taking $\eta_k = \frac{1}{L_0 + L_1 \norms{\nabla f(x^k)}}$ and using the convexity assumption of the function (see Assumption~\ref{ass:strongly_convex}, $\mu = 0$), we have the following:
\begin{align*}
    f(x^k) - f^* &\leq \dotprod{\nabla f(x^k)}{x^k - x^*}\overset{\eqref{eq:scalar_product_bound}}{\leq} \norms{\nabla f(x^k)} \norms{x^k - x^*} \overset{\circledOne}{\leq} \norms{\nabla f(x^k)} \underbrace{\norms{x^0 - x^*}}_{R} = \frac{\eta_k}{\eta_k} \norms{\nabla f(x^k)} R\\
    &= \eta_k (L_0 + L_1 \norms{\nabla f(x^k)}) \norms{\nabla f(x^k)} R \leq 2 \eta_k L_1 \norms{\nabla f(x^k)}^2 R,
\end{align*}
where $\circledOne$ follows from $\|x^k - x^*\| \leq \|x^0 - x^*\|$ \citep[proof of Theorem 3.3]{Gorbunov_2024}. The above inequality implies
\begin{equation}
    \label{eq:GD_case1_2}
    \eta_k \geq \frac{f(x^k) - f^*}{2 L_1 R \norms{\nabla f(x^k)}^2}.
\end{equation}
Plugging \eqref{eq:GD_case1_2} into \eqref{eq:GD_convex_1}, we obtain
\begin{equation*}
    f(x^{k+1}) - f(x^k) \leq - \eta_k \norms{\nabla f(x^k)}^2 \leq \frac{1}{4 L_1 R} (f(x^k) - f^*),
\end{equation*}
which is equivalent to
\begin{equation}
    f(x^{k+1}) - f^* \leq \left( 1 - \frac{1}{4 L_1 R} \right) \left( f(x^k) - f^* \right). \label{eq:bdvnjkdfkvf}
\end{equation}
Moreover, Lemma~\ref{lem:GD} implies that for all $t = 0, \ldots, k$ a similar inequality holds. We denote $T \coloneqq \min \left\{ k \in \{0,1,2,...,N-1\}\;\;\; | \;\;\;\norms{\nabla f(x^k)} < \frac{L_0}{L_1} \text{ and } \norms{\nabla f(x^{k-1})} \geq \frac{L_0}{L_1}\right\}$ as the first index $k$ such that $\norms{\nabla f(x^k)} < \frac{L_0}{L_1}$ (note that $T = 0$ is possible). Then, for the first $T$ iterations, we have linear convergence:
\begin{equation}
    f(x^T) - f^* \leq \left( 1 - \frac{1}{4 L_1 R} \right)^T \left( f(x^0) - f^* \right), \label{eq:GD_rate_1}
\end{equation}
which follows from unrolling \eqref{eq:bdvnjkdfkvf}.

\fbox{The case of $\norms{\nabla f(x^k)} <\frac{L_0}{L_1}$.} Taking $\eta_k = \frac{1}{L_0 + L_1 \norms{\nabla f(x^k)}}$ and
using the convexity assumption of the function (see Assumption~\ref{ass:strongly_convex}, $\mu = 0$), we have the following:
\begin{align}
    f(x^k) - f^* &\leq \dotprod{\nabla f(x^k)}{x^k - x^*} \overset{\eqref{eq:scalar_product_bound}}{\leq} \norms{\nabla f(x^k)} \norms{x^k - x^*} \leq \norms{\nabla f(x^k)} \underbrace{\norms{x^0 - x^*}}_{R} \label{eq:jdfjdfjhdhfj}\\
    &= \frac{\eta_k}{\eta_k} \norms{\nabla f(x^k)} R = \eta_k (L_0 + L_1 \norms{\nabla f(x^k)}) \norms{\nabla f(x^k)} R < 2 \eta_k L_0 \norms{\nabla f(x^k)} R. \notag
\end{align}
The above inequality implies
\begin{equation}
    \label{eq:GD_case2_2}
    \eta_k > \frac{f(x^k) - f^*}{2 L_0 R \norms{\nabla f(x^k)}}.
\end{equation}
Then, plugging \eqref{eq:GD_case2_2} into \eqref{eq:GD_convex_1} and using the notation $F_{k} = f(x^k) - f^*$, we obtain:
\begin{equation*}
    F_{k+1} < F_k - \frac{\norms{\nabla f(x^k)}}{4 L_0 R} F_k \overset{\eqref{eq:jdfjdfjhdhfj}}{\leq} F_k - \frac{1}{4 L_0 R^2} F_k^2,
\end{equation*}
which is equivalent to
\begin{equation*}
    \frac{1}{4  L_0 R^2} F_k^2 < F_k - F_{k+1}.
\end{equation*}
Next, we divide both sides by $F_{k+1} F_k$
\begin{equation*}
    \frac{1}{4  L_0 R^2} \cdot \frac{F_k}{F_{k+1}}  <  \frac{1}{F_{k+1}} - \frac{1}{F_{k}}
\end{equation*}
and use that $F_{k+1} \leq F_k$ due to \eqref{eq:GD_convex_1}:
\begin{equation*}
    \frac{1}{4  L_0 R^2}  <  \frac{1}{F_{k+1}} - \frac{1}{F_{k}}.
\end{equation*}
Summing up the above inequality for $k=T, T+1, ..., N$, we get
\begin{equation*}
   \frac{N-T}{4  L_0 R^2} = \sum_{k = T}^{N-1} \frac{1}{4  L_0 R^2}  <  \sum_{k = T}^{N-1} \left(\frac{1}{F_{k+1}} - \frac{1}{F_{k}}\right) = \frac{1}{F_{N}} - \frac{1}{F_{T}} < \frac{1}{F_{N}},
\end{equation*}
which is equivalent to
\begin{equation}
    f(x^N) - f^* < \frac{4L_0R^2}{N-T}. \label{eq:GD_rate_2}
\end{equation}

Finally, combining inequalities \eqref{eq:GD_rate_1} and \eqref{eq:GD_rate_2} and taking into account that $F_N \leq F_T$, we obtain the convergence rate of Algorithm~\ref{algo:GD} in the convex case:
\begin{equation*}
    f(x^N) - f^* = \mathcal{O} \left( \min\left\{\frac{L_0R^2}{N-T}, \left( 1 - \frac{1}{L_1 R} \right)^T F_0\right\} \right),
\end{equation*}
where $T \coloneqq \min \left\{ k \in \{0,1,2,...,N-1\}\;\;\; | \;\;\;\norms{\nabla f(x^k)} < \frac{L_0}{L_1} \text{ and } \norms{\nabla f(x^{k-1})} \geq \frac{L_0}{L_1}\right\}$.

\subsection{Proof of Theorem~\ref{th:NGD}}\label{subsec:Proof_NGD}

Using Assumption~\ref{ass:L0_L1_smooth}, we derive
\begin{align}
    f(x^{k+1}) - f(x^k) &= f\left(x^{k} - \eta_k \frac{\nabla f(x^k)}{\norms{\nabla f(x^k)}}\right) - f(x^k) \nonumber \\
    &\overset{\eqref{eq:ass_smooth}}{\leq} - \frac{\eta_k}{\norms{\nabla f(x^k)}} \dotprod{\nabla f(x^k)}{\nabla f(x^k)} + \eta_k^2 \frac{L_0 + L_1 \norms{\nabla f(x^k)}}{2 \norms{\nabla f(x^k)}^2} \norms{\nabla f(x^k)}^2 \nonumber \\
    & \overset{\circledOne}{\leq} - \eta_k \norms{\nabla f(x^k)} + \frac{\eta_k}{2} \norms{\nabla f(x^k)} \nonumber \\
    &= - \frac{\eta_k}{2} \norms{\nabla f(x^k)}, \label{eq:NGD_convex_1}
\end{align}
where in $\circledOne$ we used $\eta_k = \eta \leq \frac{c}{L_0 + L_1c} \leq \frac{\norms{\nabla f(x^k)}}{L_0 + L_1 \norms{\nabla f(x^k)}}$ since $\norms{\nabla f(x^k)} \geq c$ for all $k = 0,1,\ldots, N-1$ and function $\varphi(u) = \frac{u}{L_0 + L_1 u}$ is increasing function in $u \geq 0$.

Next, we us the convexity assumption of the function (see Assumption~\ref{ass:strongly_convex}, $\mu = 0$):
\begin{align}
    f(x^k) - f^* &\leq \dotprod{\nabla f(x^k)}{x^k - x^*}\overset{\eqref{eq:scalar_product_bound}}{\leq} \norms{\nabla f(x^k)} \norms{x^k - x^*} \overset{\circledOne}{\leq} \norms{\nabla f(x^k)} \underbrace{\norms{x^0 - x^*}}_{R}, \label{eq:chudhfucdsybcshj}
\end{align}
where $\circledOne$ follows from $\|x^k - x^*\| \leq \|x^0 - x^*\|$:
\begin{align*}
    \|x^k - x^*\|^2 &= \|x^{k-1} - x^*\|^2 - \frac{2\eta_k}{\|\nabla f(x^k)\|} \langle \nabla f(x^k), x^k - x^* \rangle + \eta_k^2\\
    &\overset{\eqref{eq:str_cvx}}{\leq}  \|x^{k-1} - x^*\|^2 - \frac{2\eta(f(x^k) - f^*)}{\|\nabla f(x^k)\|}  + \eta^2\\
    &= \|x^{k-1} - x^*\|^2 - \eta\left(\frac{2(f(x^k) - f^*)}{\|\nabla f(x^k)\|} - \eta\right)\\
    &\leq \|x^{k-1} - x^*\|^2,
\end{align*}
where in the last step, we use
\begin{equation*}
    \frac{\eta\|\nabla f(x^k)\|}{2} \leq \frac{c\|\nabla f(x^k)\|}{2(L_0 + L_1c)} \leq \frac{\norms{\nabla f(x^k)}^2}{2(L_0 + L_1 \norms{\nabla f(x^k)})} \overset{\eqref{eq:L0_L1_cocoercivity}}{\leq} f(x^k) - f^*.
\end{equation*}
Next, inequality \eqref{eq:chudhfucdsybcshj} gives
\begin{equation}
    \label{eq:NGD_convex_2}
    \norms{\nabla f(x^k)} \geq \frac{f(x^k) - f^*}{R}.
\end{equation}
Then, plugging \eqref{eq:NGD_convex_2} into \eqref{eq:NGD_convex_1}, we obtain:
\begin{equation*}
    f(x^{k+1}) - f(x^k) \leq - \frac{\eta}{2} \norms{\nabla f(x^k)} \leq -\frac{\eta}{2 R} (f(x^k) - f^*),
\end{equation*}
which is equivalent to
\begin{equation*}
    f(x^{k+1}) - f^* \leq \left( 1 - \frac{\eta}{2 R} \right) \left( f(x^k) - f^* \right).
\end{equation*}
Unrolling the above recurrence, we derive the linear convergence for NGD with step size $\eta_k = \eta \leq \frac{c}{L_0 + L_1c}$:
\begin{equation*}
    f(x^N) - f^* \leq \left( 1 - \frac{\eta}{2 R} \right)^N \left( f(x^0) - f^* \right).
\end{equation*}

\subsection{Proof of Theorem~\ref{th:Clip-GD}}\label{subsec:Proof_ClipGD}

Since $\lambda_k = \min \left\{1, \frac{c}{\norms{\nabla f(x^k)}}  \right\}$, we there are only two possible cases for $\lambda_k$: either $\lambda_k = 1$ or $\lambda_k = \frac{c}{\norms{\nabla f(x^k)}}
$.
\begin{enumerate}
    \item[i)] Consider the case of $\lambda_k = \frac{c}{\norms{\nabla f(x^k)}}$, i.e., $c \leq \|\nabla f(x^k)\|$. Using Assumption~\ref{ass:L0_L1_smooth}, we derive
\begin{align}
    f(x^{k+1}) - f(x^k) &= f(x^{k} - \eta_k \lambda_k \nabla f(x^k)) - f(x^k) \nonumber \\
    &= f\left(x^{k} - \eta_k \frac{c}{\norms{\nabla f(x^k)}} \nabla f(x^k)\right) - f(x^k) \nonumber \\
    &\overset{\eqref{eq:ass_smooth}}{\leq} - \eta_k \frac{c}{\norms{\nabla f(x^k)}} \dotprod{\nabla f(x^k)}{\nabla f(x^k)} \notag \\
    &\quad + \eta_k^2 \frac{c^2}{\norms{\nabla f(x^k)}^2} \frac{L_0 + L_1 \norms{\nabla f(x^k)}}{2} \norms{\nabla f(x^k)}^2 \nonumber \\
    &= - \eta_k c \norms{\nabla f(x^k)} + \eta_k^2 c^2 \frac{L_0 + L_1 \norms{\nabla f(x^k)}}{2} \nonumber \\
    & \overset{\circledOne}{\leq} - \eta_k c \norms{\nabla f(x^k)} + \frac{\eta_k c}{2} \norms{\nabla f(x^k)} \nonumber \\
    &= -\frac{\eta_k c}{2} \norms{\nabla f(x^k)}, \label{eq:ClipGD1}
\end{align}
where in $\circledOne$ we used $\eta_k \leq \frac{\norms{\nabla f(x^k)}}{c(L_0 + L_1 \norms{\nabla f(x^k)})}$, which follows from $c \leq \norms{\nabla f(x^k)}$:
\begin{align*}
    \frac{\norms{\nabla f(x^k)}}{c(L_0 + L_1 \norms{\nabla f(x^k)})} = \frac{1}{L_0 \frac{c}{\norms{\nabla f(x^k)}} + L_1 c} \geq \frac{1}{L_0 + L_1 c} = \eta = \eta_k.
\end{align*}

Next, using the convexity assumption of the function (see Assumption~\ref{ass:strongly_convex}, $\mu = 0$), we get
\begin{align}
    f(x^k) - f^* &\leq \dotprod{\nabla f(x^k)}{x^k - x^*} \overset{\eqref{eq:scalar_product_bound}}{\leq} \norms{\nabla f(x^k)} \norms{x^k - x^*} \overset{\circledOne}{\leq} \norms{\nabla f(x^k)} \underbrace{\norms{x^0 - x^*}}_{R}, \label{eq:hdjbfdhjfvdhfvjdf}
\end{align}
where $\circledOne$ follows from $\|x^k - x^*\| \leq \|x^0 - x^*\|$:
\begin{align*}
    \|x^k - x^*\|^2 &= \|x^{k-1} - x^*\|^2 - \frac{2c\eta_k}{\|\nabla f(x^k)\|} \langle \nabla f(x^k), x^k - x^* \rangle + c^2\eta_k^2\\
    &\overset{\eqref{eq:str_cvx}}{\leq}  \|x^{k-1} - x^*\|^2 - \frac{2c\eta(f(x^k) - f^*)}{\|\nabla f(x^k)\|}  + c^2\eta^2\\
    &= \|x^{k-1} - x^*\|^2 - c\eta\left(\frac{2(f(x^k) - f^*)}{\|\nabla f(x^k)\|} - c\eta\right)\\
    &\leq \|x^{k-1} - x^*\|^2,
\end{align*}
where in the last step, we use
\begin{equation*}
    \frac{c\eta\|\nabla f(x^k)\|}{2} \leq \frac{c\|\nabla f(x^k)\|}{2(L_0 + L_1c)} \leq \frac{\norms{\nabla f(x^k)}^2}{2(L_0 + L_1 \norms{\nabla f(x^k)})} \overset{\eqref{eq:L0_L1_cocoercivity}}{\leq} f(x^k) - f^*.
\end{equation*}
Inequality \eqref{eq:hdjbfdhjfvdhfvjdf} gives
\begin{equation}
    \label{eq:ClipGD1_2}
    \norms{\nabla f(x^k)} \geq \frac{f(x^k) - f^*}{R}.
\end{equation}
Then, plugging \eqref{eq:ClipGD1_2} into \eqref{eq:ClipGD1}, we obtain
\begin{equation*}
    f(x^{k+1}) - f(x^k) \overset{\eqref{eq:ClipGD1}}{\leq} - \frac{\eta c}{2} \norms{\nabla f(x^k)} \leq \frac{\eta c}{2 R} (f(x^k) - f^*),
\end{equation*}
which is equivalent to
\begin{equation*}
    f(x^{k+1}) - f^* \leq  \left( 1 - \frac{\eta c}{2 R} \right) \left( f(x^k) - f^* \right).
\end{equation*}
Next, we consider two possible scenarios for the convergence of the algorithm depending on the relation between $\norms{\nabla f(x^k)}, c$ and $\frac{L_0}{L_1}$ (note that $\norms{\nabla f(x^k)} \geq c$ in this case), given the monotonicity of the gradient norm (Lemma~\ref{lem:ClipGD}).
\begin{itemize}
    \item[$(\mathcal{T})$] If for $k = 0, 1, 2, ..., \mathcal{T}_1 - 1$, the iterates of Clip-GD satisfy $\norms{\nabla f(x^k)} \geq c \geq \frac{L_0}{L_1}$, then $\eta \geq \frac{1}{2L_1 c}$ and we have linear convergence for the first $\mathcal{T}_1$ iterations:
    \begin{equation}
        f(x^{\mathcal{T}_1}) - f^* \leq \left( 1 - \frac{1}{4 L_1 R} \right)^{\mathcal{T}_1} \left( f(x^0) - f^* \right). \label{eq:clipGD1_scenari_1}
    \end{equation}

    \item[$(\mathcal{K})$] If for $k = 0, 1, 2, ..., \mathcal{K}_1 - 1$, the iterates of Clip-GD satisfy $\norms{\nabla f(x^k)} \geq \frac{L_0}{L_1} \geq c$ or $\frac{L_0}{L_1} \geq \norms{\nabla f(x^k)} \geq c$, then $\eta \geq \frac{1}{2L_0}$ and we have linear convergence of the first $\mathcal{K}_1$ iterations:
    \begin{equation}
        f(x^{\mathcal{K}_1}) - f^* \leq \left( 1 - \frac{c}{4 L_0 R} \right)^{\mathcal{K}_1} \left( f(x^0) - f^* \right). \label{eq:clipGD1_scenari_2}
    \end{equation}
\end{itemize}

 \item[ii)] Consider the case of $\lambda_k = 1$, i.e., $c \geq \|\nabla f(x^k)\|$. Using Assumption~\ref{ass:L0_L1_smooth}, we derive
\begin{align}
    f(x^{k+1}) - f(x^k) &= f(x^{k} - \eta_k \lambda_k \nabla f(x^k)) - f(x^k) \nonumber \\
    &= f(x^{k} - \eta_k \nabla f(x^k)) - f(x^k) \nonumber \\
    &\overset{\eqref{eq:ass_smooth}}{\leq} - \eta_k \dotprod{\nabla f(x^k)}{\nabla f(x^k)} + \eta_k^2 \frac{L_0 + L_1 \norms{\nabla f(x^k)}}{2} \norms{\nabla f(x^k)}^2 \nonumber \\
    &= - \eta_k \norms{\nabla f(x^k)}^2 + \eta_k^2  \frac{L_0 + L_1 \norms{\nabla f(x^k)}}{2} \norms{\nabla f(x^k)}^2 \nonumber \\
    & \overset{\circledOne}{=} - \frac{1}{2(L_0 + L_1 \norms{\nabla f(x^k)})}\norms{\nabla f(x^k)}^2, \label{eq:ClipGD2}
\end{align}
where in $\circledOne$ we used $\eta_k = \frac{1}{L_0 + L_1 \norms{\nabla f(x^k)}}$.  Using the convexity assumption of the function (see Assumption~\ref{ass:strongly_convex}, $\mu = 0$), we get
\begin{align}
    f(x^k) - f^* &\leq \dotprod{\nabla f(x^k)}{x^k - x^*} \overset{\eqref{eq:scalar_product_bound}}{\leq} \norms{\nabla f(x^k)} \norms{x^k - x^*} \overset{\circledOne}{\leq} \norms{\nabla f(x^k)} \underbrace{\norms{x^0 - x^*}}_{R}, \label{eq:nkdvfvjkdfjvkdvj}
\end{align}
where $\circledOne$ follows from $\|x^k - x^*\| \leq \|x^0 - x^*\|$:
\begin{align*}
    \|x^k - x^*\|^2 &= \|x^{k-1} - x^*\|^2 - 2\eta_k \langle \nabla f(x^k), x^k - x^* \rangle + \eta_k^2\|\nabla f(x^k)\|^2\\
    &\overset{\eqref{eq:str_cvx}}{\leq}  \|x^{k-1} - x^*\|^2 - 2\eta_k(f(x^k) - f^*) + \eta_k^2\|\nabla f(x^k)\|^2\\
    &= \|x^{k-1} - x^*\|^2 - \eta_k\left(2(f(x^k) - f^*) - \eta_k\|\nabla f(x^k)\|^2\right)\\
    &\leq \|x^{k-1} - x^*\|^2,
\end{align*}
where in the last step, we use
\begin{equation*}
    \frac{\eta_k\|\nabla f(x^k)\|}{2} \leq \frac{\norms{\nabla f(x^k)}^2}{2(L_0 + L_1 \norms{\nabla f(x^k)})} \overset{\eqref{eq:L0_L1_cocoercivity}}{\leq} f(x^k) - f^*.
\end{equation*}
Inequality \eqref{eq:nkdvfvjkdfjvkdvj}, implies
\begin{equation}
    \label{eq:ClipGD2_case1_2}
    \norms{\nabla f(x^k)} \geq \frac{f(x^k) - f^*}{ R}.
\end{equation}
Next, we consider two cases: $\norms{\nabla f(x^k)} \geq \frac{L_0}{L_1}$ and $\norms{\nabla f(x^k)} < \frac{L_0}{L_1}$.

\fbox{The case of $\norms{\nabla f(x^k)} \geq \frac{L_0}{L_1}$.} In this case, inequality \eqref{eq:ClipGD2} gives
\begin{align}
    f(x^{k+1}) - f(x^k) & \leq - \frac{1}{4L_1} \norms{\nabla f(x^k)}. \label{eq:ClipGD2_case1_1}
\end{align}

Then, plugging \eqref{eq:ClipGD2_case1_2} into \eqref{eq:ClipGD2_case1_1}, we obtain:
\begin{equation*}
    f(x^{k+1}) - f(x^k) \leq -\frac{1}{4 L_1 R} (f(x^k) - f^*),
\end{equation*}
which is equivalent to
\begin{equation*}
    f(x^{k+1}) - f^* \leq \left( 1 - \frac{1}{4 L_1 R} \right) \left( f(x^k) - f^* \right).
\end{equation*}
Since in this case we have the following relation $c \geq \norms{\nabla f(x^k)} \geq \frac{L_0}{L_1}$, then for $k = \mathcal{T}_1, \mathcal{T}_1 + 1, ..., \mathcal{T}_2 -1$ we have linear convergence:
\begin{equation}
    f(x^{\mathcal{T}_2}) - f^* \leq \left( 1 - \frac{1}{4 L_1 R} \right)^{\mathcal{T}_2 - \mathcal{T}_1} \left( f(x^{\mathcal{T}_1}) - f^* \right) \overset{\eqref{eq:clipGD1_scenari_1}}{\leq} \left( 1 - \frac{1}{4 L_1 R} \right)^{\mathcal{T}_2} \left( f(x^0) - f^* \right). \label{eq:clipGD2_scenari_1}
\end{equation}

\fbox{The case of $\norms{\nabla f(x^k)} < \frac{L_0}{L_1}$.} In this case, inequality \eqref{eq:ClipGD2} gives
\begin{align}
    f(x^{k+1}) - f(x^k) &\leq - \frac{1}{2(L_0 + L_1 \norms{\nabla f(x^k)})}\norms{\nabla f(x^k)}^2 \nonumber\\
    & < - \frac{1}{4L_0} \norms{\nabla f(x^k)}^2. \label{eq:ClipGD2_case2_1}
\end{align}

Then, plugging \eqref{eq:ClipGD2_case1_2} into \eqref{eq:ClipGD2_case2_1} and using the notation $F_{k} \coloneqq f(x^k) - f^*$, we obtain:
\begin{equation*}
    F_{k+1} < F_k - \frac{\norms{\nabla f(x^k)}}{4 L_0 R} F_k \leq F_k - \frac{1}{4 L_0 R^2} F_k^2,
\end{equation*}
which is equivalent to
\begin{equation*}
    \frac{1}{4  L_0 R^2} F_k^2 < F_k - F_{k+1}.
\end{equation*}
Next, we divide both sides by $F_{k+1} F_k$
\begin{equation*}
    \frac{1}{4  L_0 R^2} \cdot \frac{F_k}{F_{k+1}}  <  \frac{1}{F_{k+1}} - \frac{1}{F_{k}}.
\end{equation*}
and use $F_{k+1} \leq F_k$ due to \eqref{eq:ClipGD2_case2_1}:
\begin{equation}
    \frac{1}{4  L_0 R^2}  <  \frac{1}{F_{k+1}} - \frac{1}{F_{k}}. \label{eq:bdhjbfjhbdjbvdfvhdjv}
\end{equation}

Then, two situations are possible: either $\frac{L_0}{L_1} > c$ or $\frac{L_0}{L_1} \leq c$. We consider each of them separately.
\begin{itemize}
    \item[$(\mathcal{K})$] Considering the scenario $\frac{L_0}{L_1} > c > \norms{\nabla f(x^k)}$ and summing up inequality \eqref{eq:bdhjbfjhbdjbvdfvhdjv} for $k=\mathcal{K}_1, \mathcal{K}_1+1, ..., \mathcal{K}_2-1$, we get
    \begin{equation*}
        \frac{\mathcal{K}_2-\mathcal{K}_1}{4  L_0 R^2} = \sum_{k = \mathcal{K}_1}^{\mathcal{K}_2-1} \frac{1}{4  L_0 R^2}  <  \sum_{k = \mathcal{K}_1}^{\mathcal{K}_2-1} \left(\frac{1}{F_{k+1}} - \frac{1}{F_{k}}\right) = \frac{1}{F_{\mathcal{K}_2}} - \frac{1}{F_{\mathcal{K}_1}} < \frac{1}{F_{\mathcal{K}_2}},
    \end{equation*}
    which is equivalent to
    \begin{equation}\label{eq:clipGD2_scenari_2}
        f(x^{\mathcal{K}_2}) - f^* < \frac{4L_0R^2}{\mathcal{K}_2-\mathcal{K}_1}.
    \end{equation}

    \item[$(\mathcal{T}$)] Considering the scenario $c \geq \frac{L_0}{L_1} > \norms{\nabla f(x^k)}$ and summing up inequality \eqref{eq:bdhjbfjhbdjbvdfvhdjv} for $k=\mathcal{T}_2, \mathcal{T}_2+1, ..., \mathcal{T}_3-1$, we get
    \begin{equation*}
       \frac{\mathcal{T}_3-\mathcal{T}_2}{4  L_0 R^2} =  \sum_{k = \mathcal{T}_2}^{\mathcal{T}_3-1} \frac{1}{4  L_0 R^2}  <  \sum_{k = \mathcal{T}_2}^{\mathcal{T}_3-1} \left(\frac{1}{F_{k+1}} - \frac{1}{F_{k}}\right) = \frac{1}{F_{\mathcal{T}_3}} - \frac{1}{F_{\mathcal{T}_2}} < \frac{1}{F_{\mathcal{T}_3}},
    \end{equation*}
    which is equivalent to
    \begin{equation}\label{eq:clipGD2_scenari_1_2}
        f(x^{\mathcal{T}_3}) - f^* < \frac{4L_0R^2}{\mathcal{T}_3-\mathcal{T}_2}.
    \end{equation}
\end{itemize}
\end{enumerate}

Finally, combining \eqref{eq:clipGD1_scenari_1}, \eqref{eq:clipGD1_scenari_2}, \eqref{eq:clipGD2_scenari_1}, \eqref{eq:clipGD2_scenari_2}, \eqref{eq:clipGD2_scenari_1_2}, and taking into account that $F_{k+1} \leq F_k$, we obtain the convergence rate for Algorithm~\ref{algo:Clip-GD}.
\begin{itemize}
\begin{boxF}
    \item \fbox{If $c \geq \frac{L_0}{L_1}$,} then for $\mathcal{T}_3 = N$ being the total number of iterations of Algorithm~\ref{algo:Clip-GD} the iterates satisfy (see \eqref{eq:clipGD1_scenari_1}, \eqref{eq:clipGD2_scenari_1} and \eqref{eq:clipGD2_scenari_1_2}):
    \begin{equation*}
        f(x^N) - f^* = \mathcal{O} \left(\min\left\{\frac{L_0R^2}{N-\mathcal{T}_2}, \left( 1 - \frac{1}{L_1 R} \right)^{\mathcal{T}_2} F_0 \right\}\right),
    \end{equation*}
    where $\mathcal{T}_2 \coloneqq \min \left\{ k \in \{0,1,2,...,N-1\}\;\;\; | \;\;\;\norms{\nabla f(x^k)} < \frac{L_0}{L_1} \text{ and } \norms{\nabla f(x^{k-1})} \geq \frac{L_0}{L_1}\right\}$.
    \end{boxF}

\begin{boxF}
    \item \fbox{If $c < \frac{L_0}{L_1}$,} then $\mathcal{K}_2 = N$ being the total number of iterations of Algorithm~\ref{algo:Clip-GD} the iterates satisfy (see \eqref{eq:clipGD1_scenari_2} and \eqref{eq:clipGD2_scenari_2}):
    \begin{equation*}
        f(x^N) - f^* = \mathcal{O} \left(\min\left\{\frac{L_0R^2}{N-\mathcal{K}_1}, \left( 1 - \frac{c}{L_0 R} \right)^{\mathcal{K}_1} F_0 \right\}\right),
    \end{equation*}
    where $\mathcal{K}_1 = \coloneqq \min \left\{ k \in \{0,1,2,...,N-1\}\;\;\; | \;\;\;\norms{\nabla f(x^k)} < c \text{ and } \norms{\nabla f(x^{k-1})} \geq c\right\}$.
    \end{boxF}
\end{itemize}

It is not difficult to see that these two scenarios can be combined into the following equivalent form:
\begin{equation*}
           f(x^N) - f^* = \mathcal{O} \left(\min\left\{\frac{L_0R^2}{N-T}, \left( 1 - \frac{c}{\max\{L_0, L_1 c \}R} \right)^{T} F_0 \right\}\right),
        \end{equation*}
where $T \coloneqq \min \left\{ k \in \{0,1,2,...,N-1\}\;\;\; | \;\;\;\norms{\nabla f(x^k)} < \min\left\{\frac{L_0}{L_1}, c\right\} \text{ and } \norms{\nabla f(x^{k-1})} \geq \min\left\{\frac{L_0}{L_1}, c\right\}\right\}$.

\section{Missing Proofs for Coordinate Descent Type Methods}
In this section, we provide missing proofs from Section~\ref{sec:Coordinate Descent Type Methods}. In particular, see Subsection~\ref{subsec:proof_RCD} for the proof of the convergence results for Algorithm~\ref{algo:RCD}, and see Subsection~\ref{subsec:Proof_OrderRCD} for Algorithm~\ref{algo:OrderRCD}. 

\subsection{Proof of Theorem~\ref{th:RCD}}\label{subsec:proof_RCD}

Using Assumption~\ref{ass:L0_L1_coordinate_smooth}, we derive
\begin{align}
    f(x^{k+1}) - f(x^k) &= f(x^{k} - \eta_k \nabla_{i_k} f(x^k) \ee_{i_k}) - f(x^k) \nonumber \\
    &\overset{\eqref{eq:ass_smooth_coordinate}}{\leq} - \eta_k \left(\nabla_{i_k} f(x^k)\right)^2 + \eta_k^2 \frac{L_0 + L_1 |\nabla f(x^k)|}{2} \left(\nabla_{i_k} f(x^k)\right)^2 \nonumber \\
    & \overset{\circledOne}{\leq} - \eta_k \left(\nabla_{i_k} f(x^k)\right)^2 + \frac{\eta_k}{2} \left(\nabla_{i_k} f(x^k)\right)^2 \nonumber \\
    &= - \frac{\eta_k}{2} \left(\nabla_{i_k} f(x^k)\right)^2, \label{eq:RCD_non_increasing_F}
\end{align}
where in $\circledOne$ we used $\eta_k \leq \frac{1}{L_0 + L_1 |\nabla_{i_k}f(x^k)|}$. Next, we take the expectation w.r.t.\ $i_k$ and use $\eta_k = \frac{1}{L_0 + L_1 |\nabla_{i_k}f(x^k)|}$:
\begin{align}
    \E_{i_k}[f(x^{k+1})] - f(x^k) &\leq -\frac{1}{2d}\sum\limits_{i=1}^d \frac{|\nabla_{i}f(x^k)|^2}{L_0 + L_1 |\nabla_{i}f(x^k)|} \notag\\
    &\leq -\frac{1}{4d}\sum\limits_{i=1}^d \min\left\{\frac{|\nabla_{i}f(x^k)|^2}{L_0}, \frac{|\nabla_{i}f(x^k)|}{L_1}\right\}\\
    &= -\frac{1}{4d}\left(\sum\limits_{i\in I_k} \frac{|\nabla_{i}f(x^k)|}{L_1} + \sum\limits_{i\in [d]\setminus I_k} \frac{|\nabla_{i}f(x^k)|^2}{L_0}\right), \label{eq:RCD_1}
\end{align}
where $I_k \coloneqq \left\{i \in [d] \mid |\nabla_i f(x^k)| \geq \frac{L_0}{L_1} \right\}$. Next, we introduce the set of indices $\cK$ as $$\cK \coloneqq \left\{k \in [N-1] \mid  \sum\limits_{i\in I_k} |\nabla_{i}f(x^k)|^2 > \sum\limits_{i\in [d]\setminus I_k} |\nabla_{i}f(x^k)|^2\right\}$$ and consider two possible situations.

\fbox{The case of $k \in \cK$.} In this case, we continue our derivation as follows:
\begin{align}
    \E_{i_k}[f(x^{k+1})] - f(x^k) &\leq -\frac{1}{4dL_1}\sum\limits_{i\in I_k} |\nabla_{i}f(x^k)|. \label{eq:RCD_case1_1}
\end{align}
Using the convexity assumption and notation $F_k \coloneqq f(x^k) - f^*$, we derive
\begin{align*}
    F_k &\leq \dotprod{\nabla f(x^k)}{x^k - x^*}\overset{\eqref{eq:scalar_product_bound}}{\leq} \norms{\nabla f(x^k)} \underbrace{\norms{x^k - x^*}}_{R}\\
    &= R\sqrt{\sum\limits_{i\in I_k} |\nabla_{i}f(x^k)|^2 + \sum\limits_{i\in [d]\setminus I_k} |\nabla_{i}f(x^k)|^2} \leq R \sqrt{2\sum\limits_{i\in I_k} |\nabla_{i}f(x^k)|^2} \leq \sqrt{2}R\sum\limits_{i\in I_k} |\nabla_{i}f(x^k)|
\end{align*}
that implies
\begin{equation}
    \sum\limits_{i\in I_k} |\nabla_{i}f(x^k)| \geq \frac{F_k}{\sqrt{2}R}. \label{eq:RCD_case1_2}
\end{equation}
Plugging \eqref{eq:RCD_case1_2} in \eqref{eq:RCD_case1_1}, we obtain
\begin{equation}
    \E_{i_k}[F_{k+1}] \leq \left(1 - \frac{1}{4\sqrt{2}dL_1R}\right) F_k. \label{eq:RCD_case1_descent}
\end{equation}

\fbox{The case of $k \not\in \cK$.} In this case, we continue \eqref{eq:RCD_1} as follows:
\begin{align}
    \E_{i_k}[f(x^{k+1})] - f(x^k) &\leq -\frac{1}{4dL_0}\sum\limits_{i\in [d]\setminus I_k} |\nabla_{i}f(x^k)|^2. \label{eq:RCD_case2_1}
\end{align}
Using the convexity assumption and notation $F_k \coloneqq f(x^k) - f^*$, we derive
\begin{align*}
    F_k &\leq \dotprod{\nabla f(x^k)}{x^k - x^*}\overset{\eqref{eq:scalar_product_bound}}{\leq} \norms{\nabla f(x^k)} \underbrace{\norms{x^k - x^*}}_{R}\\
    &= R\sqrt{\sum\limits_{i\in I_k} |\nabla_{i}f(x^k)|^2 + \sum\limits_{i\in [d]\setminus I_k} |\nabla_{i}f(x^k)|^2} \leq R \sqrt{2\sum\limits_{i\in [d]\setminus I_k} |\nabla_{i}f(x^k)|^2}
\end{align*}
that implies
\begin{equation}
    \sum\limits_{i\in I_k} |\nabla_{i}f(x^k)|^2 \geq \frac{F_k^2}{2R^2}. \label{eq:RCD_case2_2}
\end{equation}
Plugging \eqref{eq:RCD_case2_2} in \eqref{eq:RCD_case2_1}, we obtain
\begin{equation}
    \E_{i_k}[F_{k+1}] \leq F_k  - \frac{1}{8dL_0 R^2}F_k^2. \label{eq:RCD_case2_descent}
\end{equation}

To get the final bound, let us specify the indices belonging to $\cK$: let $\cK \coloneqq \{k_1,k_2,\ldots,k_r\}$ and $\cT \coloneqq [N-1]\setminus \cK \coloneqq \{t_1, t_2, \ldots, t_{N-r}\}$, where $0 \leq k_1 \leq k_2 \leq \ldots \leq k_r \leq N-1$ and $0 \leq t_1 \leq t_2 \leq \ldots \leq t_{N-r} \leq N-1$. Note that $\cK \cap \cT = \varnothing$, $\cK \cup \cT = [N-1]$, and $|\cK| = r$ is random variable. There exist two possible situations: either $r > \nicefrac{N}{2}$ or $r \leq \nicefrac{N}{2}$. If $r > \nicefrac{N}{2}$, then we use \eqref{eq:RCD_case1_descent} together with $F_{k+1} \leq F_k$ following from \eqref{eq:RCD_non_increasing_F}:
\begin{align}
    \E_{i \in \cK}[F_{N}] &\overset{\eqref{eq:RCD_non_increasing_F}}{\leq}  \E_{i \in \cK}[F_{k_r + 1}] \overset{\eqref{eq:RCD_case1_descent}}{\leq} \left(1 - \frac{1}{4\sqrt{2}dL_1R}\right) \E_{i \in \cK\setminus\{k_r\}}[F_{k_r}] \notag \\
    &\overset{\eqref{eq:RCD_non_increasing_F}}{\leq} \left(1 - \frac{1}{4\sqrt{2}dL_1R}\right) \E_{i \in \cK\setminus\{k_r\}}[F_{k_{r-1}+1}] \overset{\eqref{eq:RCD_case1_descent}}{\leq} \left(1 - \frac{1}{4\sqrt{2}dL_1R}\right)^2 \E_{i \in \cK\setminus\{k_r, k_{r-1}\}}[F_{k_{r-1}}] \notag\\
    &\leq \ldots \leq \left(1 - \frac{1}{4\sqrt{2}dL_1R}\right)^r F_0 \overset{r > \nicefrac{N}{2}}{\leq} \left(1 - \frac{1}{4\sqrt{2}dL_1R}\right)^{\nicefrac{N}{2}} F_0 \mathbbm{1}_{\{r > \nicefrac{N}{2}\}}, \label{eq:RCD_final_case1}
\end{align}
where $\E_{i\in \cK}$ denotes the expectation w.r.t.\ all indices in set $\cK$ and $\mathbbm{1}_{\{r > \nicefrac{N}{2}\}}$ is an indicator of the event $\{r > \nicefrac{N}{2}\}$.

Next, we consider the situation when $r \leq \nicefrac{N}{2}$. In this case, we first notice that \eqref{eq:RCD_case2_descent} gives
\begin{align*}
    \E_{i \in \cT}[F_{t_{N-r}+1}] &\overset{\eqref{eq:RCD_case2_descent}}{\leq}  \E_{i \in \cT\setminus \{t_{N-r}\}}[F_{t_{N-r}}]  - \frac{1}{8dL_0 R^2}\E_{i \in \cT\setminus \{t_{N-r}\}}[F_{t_{N-r}}^2]\\
    &\overset{\circledOne}{\leq} \E_{i \in \cT\setminus \{t_{N-r}\}}[F_{t_{N-r}}]  - \frac{1}{8dL_0 R^2}\E_{i \in \cT\setminus \{t_{N-r}\}}[F_{t_{N-r}}]^2
\end{align*}
where in $\circledOne$ we used $\E_{i \in \cT\setminus \{t_{N-r}\}}[F_{t_{N-r}}]^2 \leq \E_{i \in \cT\setminus \{t_{N-r}\}}[F_{t_{N-r}}^2]$. Dividing both sides by\newline $\E_{i \in \cT}[F_{t_{N-r}+1}]\E_{i \in \cT\setminus \{t_{N-r}\}}[F_{t_{N-r}}]$ and rearranging the terms, we get
\begin{equation}
    \frac{1}{8dL_0 R^2}\frac{\E_{i \in \cT\setminus \{t_{N-r}\}}[F_{t_{N-r}}]}{\E_{i \in \cT}[F_{t_{N-r}+1}]} \leq \frac{1}{\E_{i \in \cT}[F_{t_{N-r}+1}]} - \frac{1}{\E_{i \in \cT\setminus \{t_{N-r}\}}[F_{t_{N-r}}]}. \label{eq:hbjsdbvhjdf}
\end{equation}
In view of \eqref{eq:RCD_non_increasing_F}, we have $\E_{i \in \cT}[F_{t_{N-r}+1}] \leq \E_{i \in \cT}[F_{t_{N-r}}] = \E_{i \in \cT\setminus \{t_{N-r}\}}[F_{t_{N-r}}]$ and $- \frac{1}{\E_{i \in \cT\setminus \{t_{N-r}\}}[F_{t_{N-r}}]} \leq - \frac{1}{\E_{i \in \cT\setminus \{t_{N-r}\}}[F_{t_{N-r-1}+1}]} = - \frac{1}{\E_{i \in \cT}[F_{t_{N-r-1}+1}]}$. Using these inequalities in \eqref{eq:hbjsdbvhjdf}, we obtain
\begin{equation*}
    \frac{1}{8dL_0 R^2} \leq \frac{1}{\E_{i \in \cT}[F_{t_{N-r}+1}]} - \frac{1}{\E_{i \in \cT}[F_{t_{N-r-1}+1}]}. 
\end{equation*}
Following the same arguments, we can also show
\begin{align*}
    \frac{1}{8dL_0 R^2} &\leq \frac{1}{\E_{i \in \cT}[F_{t_{N-r-1}+1}]} - \frac{1}{\E_{i \in \cT}[F_{t_{N-r-2}+1}]},\\
    &\ldots\\
    \frac{1}{8dL_0 R^2} &\leq \frac{1}{\E_{i \in \cT}[F_{t_{1}+1}]} - \frac{1}{\E_{i \in \cT}[F_{t_{1}}]} \leq \frac{1}{\E_{i \in \cT}[F_{t_{1}+1}]} - \frac{1}{F_0}.
\end{align*}
Summing up all of them, we arrive at
\begin{equation*}
    \frac{N-r}{8dL_0 R^2} \leq \frac{1}{\E_{i \in \cT}[F_{t_{N-r}+1}]} - \frac{1}{F_0} \leq \frac{1}{\E_{i \in \cT}[F_{t_{N-r}+1}]},
\end{equation*}
implying
\begin{equation}
    \E_{i \in \cT}[F_{N}] \leq \E_{i \in \cT}[F_{t_{N-r}+1}] \leq \frac{8dL_0 R^2}{N-r} \overset{r \leq \nicefrac{N}{2}}{\leq} \frac{16dL_0 R^2}{N} \mathbbm{1}_{\{r \leq \nicefrac{N}{2}\}}. \label{eq:RCD_final_case2}
\end{equation}

Combining \eqref{eq:RCD_final_case1} and \eqref{eq:RCD_final_case2} and taking the full expectation, we get
\begin{align*}
    \E[F_{N}] &\leq \left(1 - \frac{1}{4\sqrt{2}dL_1R}\right)^{\nicefrac{N}{2}} F_0 \E[\mathbbm{1}_{\{r > \nicefrac{N}{2}\}}] + \frac{16dL_0 R^2}{N} \E[\mathbbm{1}_{\{r \leq \nicefrac{N}{2}\}}]\\
    &\leq \max\left\{\left(1 - \frac{1}{4\sqrt{2}dL_1R}\right)^{\nicefrac{N}{2}} F_0, \frac{16dL_0 R^2}{N}\right\},
\end{align*}
which concludes the proof.

\subsection{Proof of Theorem~\ref{th:OrderRCD}}\label{subsec:Proof_OrderRCD}
Algorithm \ref{algo:OrderRCD}, presented in Section~\ref{sec:Coordinate Descent Type Methods}, uses the Golden Ratio Method (GRM) once per iteration. This method utilizes the oracle concept \eqref{eq:Order_Oracle} (see Algorithm \ref{algo:GRM}).

\begin{algorithm}[H]
    \caption{Golden Ratio Method (GRM)}
    \label{algo:GRM}
    \begin{algorithmic}[1]
    \State {\bfseries Input:} interval $[a, b]$, accuracy $\hat\epsilon$
    \State {\bfseries Initialization:}  define constant $\rho = \frac{1}{\Phi} = \frac{\sqrt{5} - 1}{2}$
    \State $y \gets a + (1 - \rho)(b - a)$
    \State $z \gets a + \rho(b - a)$
    \While{$b - a > \hat\epsilon$}
        \If{$\phi(y,z) = -1$}
            \State $b \gets z$
            \State $z \gets y$
            \State $y \gets a + (1 - \rho)(b - a)$
        \Else
            \State $a \gets y$
            \State  $y \gets z$
            \State $z \gets a + \rho(b - a)$
        \EndIf
    \EndWhile
   \State {\bfseries Return:} $\frac{a + b}{2}$
    \end{algorithmic}
\end{algorithm}

We utilize the Golden Ratio Method to find a solution to the following one-dimensional problem (see line 2 in Algorithm~\ref{algo:OrderRCD}):

\begin{equation*}
    \zeta_k = \argmin_{\zeta \in \mathbb{R}} f(x^k + \zeta \ee_{i_k}).
\end{equation*}

Using the well-known fact about the golden ratio method that GRM is required to do $N = \Obound{\log \frac{1}{\epsilon}}$ (where $\epsilon$ is the accuracy of the solution to the linear search problem in terms of the function value; due to \eqref{eq:ass_smooth_coordinate}, it is sufficient to take $\hat{\epsilon} = \nicefrac{2\epsilon}{L_0}$), we derive the following corollaries from the solution of this problem: for simplicity, we consider the scenario when the golden ratio method solves the inner problem exactly ($\epsilon \simeq 0$). Then, we have the following:
    \begin{equation}
    \label{eq:col_without_noise}
        f(x_k + \zeta_k \ee_{i_k}) \leq f(x_k + \zeta \ee_{i_k}), \quad \quad \quad \forall \zeta \in \mathbb{R}.
    \end{equation}
Using the above inequality with $\zeta = \eta_k \nabla_{i_k}f(x^k)$, $\eta_k \coloneqq \frac{1}{L_0 + L_1|\nabla_{i_k}f(x^k)|}$, and applying Assumption~\ref{ass:L0_L1_coordinate_smooth}, we get
\begin{align}
    f(x^{k+1}) - f(x^k) &= f(x^k + \zeta_k \ee_{i_k}) - f(x^k)\nonumber\\
    &\overset{\eqref{eq:col_without_noise}}{\leq} f(x^{k} - \eta_k \nabla_{i_k} f(x^k) \ee_{i_k}) - f(x^k) \nonumber \\
    &\overset{\eqref{eq:RCD_non_increasing_F}}{\leq}  - \frac{\eta_k}{2} \left(\nabla_{i_k} f(x^k)\right)^2. \label{eq:OrderRCD_1}
\end{align}
The rest of the proof is identical to the proof given in Appendix~\ref{subsec:proof_RCD} and leads to the same bound:
\begin{equation*}
    \E[f(x^N)] - f^* \leq \max\left\{\left(1 - \frac{1}{4\sqrt{2}dL_1R}\right)^{\nicefrac{N}{2}} F_0, \frac{16dL_0 R^2}{N}\right\}.
\end{equation*}
The above upper bound implies that to achieve $\E[f(x^N)] - f^* \leq \varepsilon$, OrderRCD needs to perform 
\begin{equation*}
    N = \mathcal{O}\left( \max\left\{ \frac{dL_0R^2}{\varepsilon}, dL_1R \log \frac{F_0}{\varepsilon} \right\} \right)\;\;\;\;\;\; \text{iterations and}
\end{equation*}

\begin{equation*}
    T = \mathcal{O}\left( \max\left\{ \frac{dL_0R^2}{\varepsilon}, dL_1R \log \frac{F_0}{\varepsilon} \right\} \cdot \log \frac{1}{\epsilon} \right)\;\;\;\;\;\; \text{Order Oracle calls,}
\end{equation*}
where $\epsilon$ is the accuracy of the solution of the auxiliary optimization problem (see line 2 in Algorithm~\ref{algo:OrderRCD}), and it has to be sufficiently small.

\section{Missing Proof for GD in the Strongly Convex Setup}\label{app:Proof_GD_strongly}

From the analysis of the convex case, we have
\begin{align}
    f(x^{k+1}) - f(x^k) &\overset{\eqref{eq:GD_convex_1}}{\leq} - \frac{1}{2(L_0 + L_1 \norms{\nabla f(x^k)})}\norms{\nabla f(x^k)}^2. \label{eq:GD_strongly_1}
\end{align}
Next, let us consider two cases: $\norms{\nabla f(x^k)} \geq \frac{L_0}{L_1}$ and $\norms{\nabla f(x^k)} < \frac{L_0}{L_1}$.

\fbox{The case of $\norms{\nabla f(x^k)} \geq \frac{L_0}{L_1}$.} In this case, we have $L_0 + L_1 \norms{\nabla f(x^k)} \leq 2L_1 \norms{\nabla f(x^k)}$. Using this inequality in \eqref{eq:GD_strongly_1}, we obtain
\begin{equation}
    f(x^{k+1}) - f(x^k) \leq - \frac{\|\nabla f(x^k)\|}{4L_1}. \label{eq:GD_strongly_1_1}
\end{equation}
To continue the derivation, we also consider two possible situations depending on $F_k \coloneqq f(x^k) - f^*$.
\begin{enumerate}
    \item[i)] If $F_k \geq 1$, then we proceed as in the convex case and get 
    \begin{equation}
        F_{k+1} \overset{\eqref{eq:bdvnjkdfkvf}}{\leq} \left( 1 - \frac{1}{4 L_1 R} \right) F_k. \label{eq:bdvnjkdfkvf_str_cvx}
    \end{equation}
    In view of \eqref{eq:GD_strongly_1} and Lemma~\ref{lem:GD}, we have $F_{k+1} \leq F_k$ and $\|\nabla f(x^{k+1})\| \leq \|\nabla f(x^k)\|$. Therefore, if $F_k \geq 1$ and $\norms{\nabla f(x^k)} \geq \frac{L_0}{L_1}$, then $F_t \geq 1$ and $\norms{\nabla f(x^t)} \geq \frac{L_0}{L_1}$ for all $t = 0,1,\ldots, k$. Let $\cT_1$ be the largest $k \in [N-1]$ such that $F_k \geq 1$ and $\norms{\nabla f(x^k)} \geq \frac{L_0}{L_1}$ (if there is no such $k$ for given initialization, then $\cT_1 \coloneqq -1$). Then, we have
    \begin{equation}
        F_{\cT_1+1} \overset{\eqref{eq:bdvnjkdfkvf_str_cvx}}{\leq} \left( 1 - \frac{1}{4 L_1 R} \right)^{\cT_1+1} F_0. \label{eq:bdvnjkdfkvf_str_cvx_2}
    \end{equation}
    Using the above inequality, we can upper bound $\cT_1$ as
    \begin{align*}
        \mathcal{T}_1 \leq 4L_1R \log (F_0). 
    \end{align*}

    \item[ii)] If $F_k < 1$, we use Polyak-{\L}ojasiewicz \citep{polyak1963gradient, lojasiewicz1963topological} inequality
    \begin{align}
        \norms{\nabla f(x^k)}^2 &\geq 2\mu F_k \label{eq:PL_ineq} \\
        &\overset{F_k < 1}{>} 2\mu (F_k)^2, \label{eq:GD_strongly1_case2_1}
    \end{align}
    which follows from strong convexity \cite{Nesterov_2018}. Then, we can continue the derivation as follows:
    \begin{align}
        F_{k+1} &\overset{\eqref{eq:GD_strongly_1_1}}{\leq} F_k  - \frac{1}{4L_1} \norms{\nabla f(x^k)} \nonumber\\
        &\overset{\eqref{eq:GD_strongly1_case2_1}}{\leq} \left(1 - \frac{\sqrt{\mu}}{2\sqrt{2}L_1}\right) F_k. \label{eq:GD_strongly1_case3_2}
    \end{align}
    Moreover, since \eqref{eq:bdvnjkdfkvf_str_cvx} holds whenever $\norms{\nabla f(x^k)} \geq \frac{L_0}{L_1}$, we can tighten the above inequality as
    \begin{align}
        F_{k+1} &\leq \left(1 - \max\left\{\frac{\sqrt{\mu}}{2\sqrt{2}L_1}, \frac{1}{4L_1R}\right\}\right) F_k. \label{eq:GD_strongly1_case3_3}
    \end{align}
    In view of Lemma~\ref{lem:GD}, we have $\|\nabla f(x^{k+1})\| \leq \|\nabla f(x^k)\|$. Therefore, if $\norms{\nabla f(x^k)} \geq \frac{L_0}{L_1}$, then $\norms{\nabla f(x^t)} \geq \frac{L_0}{L_1}$ for all $t = 0,1,\ldots, k$. Let $\cT_2$ be the largest $k \in [N-1]$ such that $\norms{\nabla f(x^k)} \geq \frac{L_0}{L_1}$ (if there is no such $k$ for given initialization, then $\cT_2 \coloneqq -1$). Then, we have
    \begin{align}
        F_{\mathcal{T}_2+1} &\leq \left(1 - \max\left\{\frac{\sqrt{\mu}}{2\sqrt{2}L_1}, \frac{1}{4L_1R}\right\}\right)^{\mathcal{T}_2 - \mathcal{T}_1} F_{\mathcal{T}_1+1} \notag\\
        &\overset{\eqref{eq:bdvnjkdfkvf_str_cvx_2}}{\leq} \left(1 - \max\left\{\frac{\sqrt{\mu}}{2\sqrt{2}L_1}, \frac{1}{4L_1R}\right\}\right)^{\mathcal{T}_2 - \mathcal{T}_1}\left( 1 - \frac{1}{4 L_1 R} \right)^{\cT_1+1} F_0.\label{eq:GD_strongly_scenari_3}
    \end{align}
\end{enumerate}

\fbox{The case of $\norms{\nabla f(x^k)} < \frac{L_0}{L_1}$.} In this case, we have $L_0 + L_1\|\nabla f(x^k)\| \leq 2L_0$. Using this inequality in \eqref{eq:GD_strongly_1}, we obtain
    \begin{align}
        F_{k+1} &\overset{\eqref{eq:GD_strongly_1}}{\leq} F_k - \frac{\norms{\nabla f(x^k)}^2}{4L_0} \nonumber\\
        &\overset{\eqref{eq:PL_ineq}}{\leq} \left(1 - \frac{\mu}{2L_0}\right) F_k. \label{eq:GD_strongly2_case1_2}
    \end{align}
    Since Algorithm~\ref{algo:GD} converges monotonically in terms of the gradient norm (see Appendix~\ref{app:Monotonicity of Algorithms by Gradient Norms}), the above inequality holds for $k=\mathcal{T}_2+1, \mathcal{T}_2+2\ldots,N-1$ iterations and gives
    \begin{align}
        F_N &\leq \left( 1 - \frac{\mu}{2L_0} \right)^{N-\cT_2} F_{\cT_2+1}\notag \\ &\overset{\eqref{eq:GD_strongly_scenari_3}}{\leq} \left( 1 - \frac{\mu}{2L_0} \right)^{N-\cT_2}\left(1 - \max\left\{\frac{\sqrt{\mu}}{2\sqrt{2}L_1}, \frac{1}{4L_1R}\right\}\right)^{\mathcal{T}_2 - \mathcal{T}_1}\left( 1 - \frac{1}{4 L_1 R} \right)^{\cT_1+1} F_0.\notag
    \end{align}
This concludes the proof.

\section{Motivation Strong Growth Conditions on the Example of Logistic Regression}

We know from Section~\ref{sec:Discussion and Future Work} that the strong growth condition for smoothness (see Assumption~\ref{ass:L0_L1_smooth} when $L_0 = 0$) is satisfied by the logistic regression problem, which is often used in the machine learning community. However, this problem does not reach a minimum (hence $R = \arginf f(x) = + \infty$). Therefore, in this section we show that, for example, gradient descent (Algorithm~\ref{algo:GD}) will achieve the desired accuracy $\varepsilon$ in a finite number of iterations with linear rate. 

We introduce the hyperparameter of the algorithm $s: f(s) - f^* \leq \varepsilon$. Then we show linear convergence to the desired accuracy by the example of gradient descent.

Using the strong growth condition for smoothness (see Assumption~\ref{ass:L0_L1_smooth} with $L_0 = 0$) we have:
\begin{align}
    f(x^{k+1}) - f(x^k) &= f(x^{k} - \eta_k \nabla f(x^k)) - f(x^k) \nonumber \\
    &\overset{\eqref{eq:ass_smooth}}{\leq} - \eta_k \dotprod{\nabla f(x^k)}{\nabla f(x^k)} + \eta_k^2 \frac{L_1 \norms{\nabla f(x^k)}}{2} \norms{\nabla f(x^k)}^2 \nonumber \\
    & \overset{\circledOne}{\leq} - \eta_k \norms{\nabla f(x^k)}^2 + \frac{\eta_k}{2} \norms{\nabla f(x^k)}^2 \nonumber \\
    &= - \frac{\eta_k}{2} \norms{\nabla f(x^k)}^2 \nonumber \\
    & = - \frac{1}{2 L_1 \norms{\nabla f(x^k)}} \norms{\nabla f(x^k)}^2 \nonumber\\
    & = - \frac{1}{2 L_1} \norms{\nabla f(x^k)}, \label{eq:SGC_GD_convex_1}
\end{align}
where in $\circledOne$ we used $\eta_k \leq \frac{1}{L_1 \norms{\nabla f(x^k)}}$.

Then using the convexity assumption of the function (see Assumption~\ref{ass:strongly_convex}, $\mu = 0$), we have the following:
\begin{align*}
    f(x^k) - f(s) &\leq \dotprod{\nabla f(x^k)}{x^k - s}\\
    &\overset{\eqref{eq:scalar_product_bound}}{\leq} \norms{\nabla f(x^k)} \norms{x^k - s}\\
    & \leq \norms{\nabla f(x^k)} \underbrace{\norms{x^0 - s}}_{R_s}.
\end{align*}
Hence we have:
\begin{equation}
    \label{eq:SGC_GD_convex_2}
    \norms{\nabla f(x^k)} \geq \frac{f(x^k) - f(s)}{R_s}.
\end{equation}
Then substituting \eqref{eq:SGC_GD_convex_2} into \eqref{eq:SGC_GD_convex_1} we obtain:
\begin{equation*}
    f(x^{k+1}) - f(x^k) \leq - \frac{1}{2 L_1} \norms{\nabla f(x^k)} \leq - \frac{1}{2 L_1 R_s} (f(x^k) - f(s)).
\end{equation*}

This inequality is equivalent to the trailing inequality:
\begin{equation}
    f(x^{k+1}) - f^* \leq \left( 1 - \frac{1}{2 L_1 R_s}\right) (f(x^k) - f^*) + \frac{1}{2 L_1 R_s} (f(s) - f^*). \label{eq:SGC_GD_convex_3}
\end{equation}

Applying recursion to \eqref{eq:SGC_GD_convex_3} we obtain:
\begin{equation*}
    f(x^{N}) - f^*  \leq \left( 1 - \frac{1}{2 L_1 R_s}\right)^N (f(x^0) - f^*) + \underbrace{(f(s) - f^*)}_{\varepsilon}.
\end{equation*}

Therefore, we have shown that Algorithm~\ref{algo:GD} will achieve the desired accuracy $\varepsilon$ in a finite number of iterations: ${N = \mathcal{O} \left( L_1 R_s \log \frac{1}{\varepsilon} \right)}$. $R_s$ is a finite number and increases as the desired accuracy improves. The same can be shown for other algorithms.

\end{document}